\DeclareFontFamily{U}{mathx}{\hyphenchar\font45}
\DeclareFontShape{U}{mathx}{m}{n}{
      <5> <6> <7> <8> <9> <10>
      <10.95> <12> <14.4> <17.28> <20.74> <24.88>
      mathx10
      }{}
\DeclareSymbolFont{mathx}{U}{mathx}{m}{n}
\DeclareMathAccent{\widecheck}{0}{mathx}{"71}
\DeclareMathAccent{\wideparen}{0}{mathx}{"75}
\newcommand{\N}{\ensuremath{\mathbb{N}}}
\newcommand{\R}{\ensuremath{\mathbb{R}}}
\newcommand{\C}{\ensuremath{\mathbb{C}}}
\newcommand{\T}{\ensuremath{\mathbb{T}}}
\newcommand{\Strip}{\ensuremath{\mathbb{S}}}
\DeclareMathOperator{\supp}{supp}  % Support
\newcommand{\restrict}[1]{\raisebox{-0.15ex}{$|$}_{#1}}
\newcommand{\integral}[3]{\int_{#2} #1\,\mathrm{d}#3}
\newcommand{\rintegral}[4]{\int_{#2}^{#3} #1\,\mathrm{d}#4}
\DeclareMathOperator{\real}{Re}
\DeclareMathOperator{\imag}{Im}
\DeclareMathOperator{\aut}{Aut} % Automorphisms
\DeclareMathOperator{\isom}{Isom} % Isometries
\DeclareMathOperator{\Rep}{Rep}
\newcommand{\act}[1]{\overset{#1}{\curvearrowright}}
\newcommand{\overdot}[1]{\overset{\boldsymbol{\cdot}}{#1}}
\newcommand{\set}[2]{\left\{\,#1\;\middle|\; #2\,\right\}}
\def\moverlay{\mathpalette\mov@rlay}
\def\mov@rlay#1#2{\leavevmode\vtop{%
   \baselineskip\z@skip \lineskiplimit-\maxdimen
   \ialign{\hfil$\m@th#1##$\hfil\cr#2\crcr}}}
\newcommand{\charfusion}[3][\mathord]{
    #1{\ifx#1\mathop\vphantom{#2}\fi
        \mathpalette\mov@rlay{#2\cr#3}
      }
    \ifx#1\mathop\expandafter\displaylimits\fi}
\newcommand{\bigcupdot}{\charfusion[\mathop]{\bigcup}{\cdot}}
\newcommand{\net}[3]{\left(#1_{#2}\right)_{#2\in#3}}
\newcommand{\inner}[2]{\left\langle #1\, ,#2 \right\rangle}
\newcommand{\innerdot}{\left\langle \,\cdot\, ,\,\cdot\, \right\rangle}
\newcommand{\norm}[2]{\ensuremath{\left|\!\left|#1\right|\!\right|_{#2}}}
\newcommand{\normdot}[1]{\ensuremath{\left|\!\left|\,\cdot\,\right|\!\right|_{#1}}}
\newcommand{\abs}[1]{\ensuremath{\left|#1\right|}}
\newcommand{\addQEDstyle}[2]{\AtBeginEnvironment{#1}{\pushQED{\qed}\renewcommand{\qedsymbol}{#2}}\AtEndEnvironment{#1}{\popQED}}
\theoremstyle{plain}
\newtheorem{thm}{Theorem}
\numberwithin{thm}{section}
\newtheorem{lem}[thm]{Lemma}
\newtheorem{cor}[thm]{Corollary}
\newtheorem{prop}[thm]{Proposition}
\theoremstyle{definition}
\newtheoremstyle{warning}% 〈name〉
{\topsep}% 〈Space above〉
{\topsep}% 〈Space below 〉
{}% 〈Body font〉
{}% 〈Indent amount〉Indent amount: empty = no indent, \parindent = normal paragraph indent
{\bfseries}% 〈Theorem head font〉
{!}% 〈Punctuation after theorem head 〉
{.5em}% 〈Space after theorem head 〉Space after theorem head: { } = normal interword space; \newline = line break
{}% 〈Theorem head spec (can be left empty, meaning ‘normal’ )〉
\theoremstyle{warning}
\newtheorem*{warning}{Warning}
\newtheoremstyle{question}% 〈name〉
{\topsep}% 〈Space above〉
{\topsep}% 〈Space below 〉
{}% 〈Body font〉
{}% 〈Indent amount〉Indent amount: empty = no indent, \parindent = normal paragraph indent
{\bfseries}% 〈Theorem head font〉
{.}% 〈Punctuation after theorem head 〉
{.5em}% 〈Space after theorem head 〉Space after theorem head: { } = normal interword space; \newline = line break
{}% 〈Theorem head spec (can be left empty, meaning ‘normal’ )〉
\theoremstyle{question}
\theoremstyle{remark}
\newtheorem{rk}[thm]{Remark}
\theoremstyle{plain}
\newtheorem{introthm}{Theorem}
\theoremstyle{definition}
\title{Symmetrized pseudofunction algebras from $L^p$-representations and amenability of locally compact groups}
\author{Emilie Mai Elki\ae{}r}
\date{\today}
\begin{document}

\maketitle

\begin{abstract}
We show via an application of techniques from complex interpolation theory how the $L^p$-pseudofunction algebras of a locally compact group $G$ can be understood as sitting between $L^1(G)$ and $C^*(G)$. Motivated by this, we collect and review various characterizations of group amenability connected to the $p$-pseudofunction algebra of Herz and generalize these to the symmetrized setting. Along the way, we describe the Banach space dual of the symmetrized pseudofuntion algebras on $G$ associated with representations on reflexive Banach spaces.
\end{abstract}

\section{Introduction}
Let $G$ be a locally compact group. For $1<p<\infty$, the \emph{$p$-pseudofunction algebra} of $G$, which we denote by $F_{\lambda_p}(G)$, is the completion of $L^1(G)$ with respect to the norm associated with the left-regular representation of $G$ on $L^p(G)$. This Banach algebra goes back to the work of Herz from the 1970's (see Section 8 in \cite{Herz1973HarmonicSynthesis}) where it is denoted by $PF_p(G)$, and it has been studied intensely in the context of abstract harmonic analysis (see, e.g., \cite{Herz1976asymmetry}, \cite{CowlingFournier1976Inclusions}, \cite{DerighettiConvolution} and \cite{DawsSpronk2013TheAP}). More recently, it has appeared in work by several authors playing the role as an $L^p$-analog of the reduced group $C^*$-algebra. For example, the simplicity of $F_{\lambda_p}(G)$ is studied by Hejazian and Pooya in \cite{HejazianPooya2014Simple} and by Phillips in \cite{Phillips2019ReducedGpBanachAlg}. Further, Liao and Yu studies the K-theory of $F_{\lambda_p}(G)$ in \cite{LiaoYu2017Ktheory} and Gardella and Thiel shows in \cite{GardellaThiel2018Isomorphisms} the strong rigidity result that a locally compact group $G$ can be recovered from $F_{\lambda_p}(G)$ when $1<p<\infty$ and $p\neq2$. The analogy with the reduced group $C^*$-algebra is underlined by Phillips who refers to $F_{\lambda_p}(G)$ as the \emph{reduced group $L^p$-operator algebra of $G$}. Analogs of the universal group $C^*$-algebra have also appeared in the literature in the form of various pseudofunction algebras, where by \emph{pseudofunction algebra} we mean a completion of $L^1(G)$ with respect to a norm coming from some class of isometric representations of $G$. For example, in the work of Gardella and Thiel in \cite{GardellaThiel2014GpAlgActingOnLp}, the role of $C^*(G)$ is played by the pseudofunction algebra $F_{L^p}(G)$ associated with the class of isometric representations of $G$ on $L^p$-spaces. The $L^p$-pseudofunction algebra $F_{L^p}(G)$ is also suggested as a natural $p$-analog of $C^*(G)$ by Drutu and Nowak in \cite{DrutuNowak2015KazhdanProj}. Another suggestion for a $p$-analog of $C^*(G)$ is the $QSL^p$-pseudofunction algebra $F_{QSL^p}(G)$, which appears in \cite{Runde2004repQSLP} where Runde studies its dual as a $p$-analog of the Fourier-Stieltjes algebra.

In this paper, we study symmetrized versions of pseudofunctions algebras on $G$ associated with representations on $L^p$- and on $QSL^p$-spaces. We are, in particular, interested in the symmetrized $p$-pseudofunction algebras $F^*_{\lambda_p}(G)$ and the symmetrized $L^p$-pseudofunction algebras $F^*_{L^p}(G)$. The symmetrized $p$-pseudofunction algebras were introduced by Liao and Yu in \cite{LiaoYu2017Ktheory} in connection with the Baum-Connes conjecture. Later, their simplicity has been studied by Phillips in \cite{Phillips2019ReducedGpBanachAlg}, and they have appeared in the work of Samei and Wiersma in \cite{Samei2018QuasiHermitian} and \cite{SameiWiersma2018Exotic} where they were studied in connection with quasi-Hermitian groups and exotic group $C^*$-algebras, respectively. Unlike their non-symmetrized relatives, the symmetrized pseudofunction algebras are always Banach $^*$-algebras with the involution coming from $L^1(G)$. The starting point for this paper is the commutative diagram below consisting of canonical contractions with dense range:
\begin{center}
\begin{tikzcd}[row sep=normal, column sep = normal]
~ & ~ & L^1(G) \arrow{dl}{} \arrow{dr}{} & ~ & ~ \\ 
~ & F_{L^p}^*(G) \arrow{dl}{} \arrow{rr}{} & ~ & F_{\lambda_p}^*(G) \arrow{dr}{} & ~\\
C^*(G) \arrow{rrrr}{} & ~ & ~ & ~ & C_r^*(G)
\end{tikzcd}
\end{center}
With this diagram in mind, we think of $F^*_{\lambda_p}(G)$ and $F^*_{L^p}(G)$ as interpolations between $L^1(G)$ and the group $C^*$-algebras. More precisely, for a pair of Hölder exponents $1\leq p<q\leq2$, it is shown in Proposition 4.5 in \cite{Samei2018QuasiHermitian} that the identity on $L^1(G)$ extends to a contraction $F_{\lambda_p}^*(G)\rightarrow F_{\lambda_q}^*(G)$. This is known to fail in general in the non-symmetrized setting (see Remark 3.19 in \cite{GardellaThiel2014GpAlgActingOnLp}). In Theorem \ref{introthm:FstarLp->FstarLq}, we show the analogous statement for the symmetrized $L^p$-pseudofunction algebras via an application of Stein's Interpolation Theorem. For the non-symmetrized $L^p$-pseudofunction algebras, this is shown in Theorem 2.30 in \cite{GardellaThiel2014GpAlgActingOnLp}. In the non-symmetrized setting, however, the proof is more cumbersome as interpolation techniques are not available.
\begin{introthm}\label{introthm:FstarLp->FstarLq}
Let $G$ be a locally compact group and let $1\leq p<q\leq2$, The identity map on $L^1(G)$ extends to a contractive $^*$-homomorphism $F_{L^p}^*(G)\rightarrow F_{L^q}^*(G)$ with dense range. 
\end{introthm}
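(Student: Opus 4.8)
The plan is to reduce everything to the single norm inequality $\|f\|_{F^*_{L^q}(G)}\le\|f\|_{F^*_{L^p}(G)}$ for $f$ in a dense subalgebra of $L^1(G)$. Granting this, the asserted map is the unique continuous extension of the identity of $L^1(G)$; it is a $^*$-homomorphism because it is one on the dense $^*$-subalgebra $L^1(G)$ (both $F^*_{L^p}(G)$ and $F^*_{L^q}(G)$ are completions of $L^1(G)$ for a submultiplicative norm that is invariant under the $L^1$-involution $f\mapsto f^*$, by construction of the symmetrized norms), and it has dense range since $L^1(G)$ is dense in $F^*_{L^q}(G)$. As all norms in play are dominated by $\|\cdot\|_{L^1(G)}$, it suffices to prove the inequality for $f\in C_c(G)$. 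The case $p=1$ is immediate ($F^*_{L^1}(G)=L^1(G)$ carries the largest of these norms), and the case $q=2$ reduces to the contraction $F^*_{L^p}(G)\to C^*(G)=F^*_{L^2}(G)$ of the introductory diagram (equivalently, combine the cases $q<2$ below with Theorem 2.30 of \cite{GardellaThiel2014GpAlgActingOnLp} at exponent $2$, using that $C^*(G)$ has isometric involution). So fix $1<p<q<2$.

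Fix an isometric representation $\rho$ of $G$ on an $L^q$-space $L^q(\Omega,\mu)$; each $\rho(g)$ is a surjective isometry. Since $q\neq2$, the Banach--Lamperti structure theorem for isometries of $L^q$-spaces, together with the standard measurability argument upgrading the underlying set isomorphisms to a measure-class-preserving action $G\curvearrowright(\Omega,\mu)$, lets us assume $\rho$ has the Koopman--cocycle form
\[
(\rho(g)\xi)(x)=u_g(x)\,J_g(x)^{1/q}\,\xi(g^{-1}x),
\]
where $J_g=\mathrm d(g_*\mu)/\mathrm d\mu$ is the Radon--Nikodym cocycle (so $J_{gh}(x)=J_g(x)J_h(g^{-1}x)$ and $J_g>0$ a.e.) and $u\colon G\times\Omega\to\mathbb T$ is a measurable $\mathbb T$-valued cocycle over the action. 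The gain from this normal form is that $\rho$ now sits inside a compatible family: for each $r\in[1,\infty)$ the same data give an isometric representation $\rho_r$ of $G$ on $L^r(\Omega,\mu)$ via $(\rho_r(g)\xi)(x)=u_g(x)J_g(x)^{1/r}\xi(g^{-1}x)$, with $\rho_q=\rho$; and replacing $u$ by $u$ times any $\mathbb T$-valued cocycle again yields an isometric representation of $G$ on the same $L^r(\Omega,\mu)$ (the requisite continuity being inherited from that of $\rho$ through the structure theorem).

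Now fix $f\in C_c(G)$, set $\beta=1/p-1/p'>0$, and for $z$ in the closed strip $\{0\le\Re z\le1\}$ put $1/r(z)=(1-z)/p+z/p'=1/p-z\beta$ and
\[
(T_z\xi)(x)=\int_G f(g)\,u_g(x)\,J_g(x)^{1/r(z)}\,\xi(g^{-1}x)\,\mathrm dg
\]
for simple $\xi$. Since $J_g(x)^{1/r(z)}=J_g(x)^{1/p}e^{-z\beta\log J_g(x)}$, this is an analytic family of operators: for simple $\xi,\eta$ the function $z\mapsto\langle T_z\xi,\eta\rangle$ is analytic on the open strip and bounded and continuous on its closure (a routine check using $f\in C_c(G)$, $\Re(1/r(z))\in[1/p',1/p]$, and that each $\rho_r(g)$ is an isometry). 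On $\Re z=0$ one has $1/r(\mathrm is)=1/p-\mathrm is\beta$, so $T_{\mathrm is}$ equals $\widetilde\rho^{(s)}(f)$ for the isometric representation $\widetilde\rho^{(s)}$ of $G$ on the $L^p$-space $L^p(\Omega,\mu)$ attached to the action and the twisted $\mathbb T$-cocycle $u_g(x)J_g(x)^{-\mathrm is\beta}$; hence $\|T_{\mathrm is}\|_{\mathcal B(L^p(\mu))}\le\|f\|_{F_{L^p}(G)}$ uniformly in $s$. Likewise on $\Re z=1$ one has $1/r(1+\mathrm is)=1/p'-\mathrm is\beta$, so $T_{1+\mathrm is}$ is a $\mathbb T$-cocycle twist of $\rho_{p'}$ acting on the $L^{p'}$-space $L^{p'}(\Omega,\mu)$, whence $\|T_{1+\mathrm is}\|_{\mathcal B(L^{p'}(\mu))}\le\|f\|_{F_{L^{p'}}(G)}=\|f^*\|_{F_{L^p}(G)}$ --- the last equality because isometric $L^{p'}$-representations are precisely the contragredients of isometric $L^p$-representations and $\pi(f)^*=\bar\pi(f^*)$. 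Choosing $\theta\in(0,1)$ with $r(\theta)=q$ and using $L^q(\Omega,\mu)=[\,L^p(\Omega,\mu),L^{p'}(\Omega,\mu)\,]_\theta$ isometrically (Calderón), Stein's interpolation theorem yields
\[
\|\rho(f)\|_{\mathcal B(L^q(\mu))}=\|T_\theta\|_{\mathcal B(L^q(\mu))}\le\|f\|_{F_{L^p}(G)}^{\,1-\theta}\,\|f^*\|_{F_{L^p}(G)}^{\,\theta}\le\|f\|_{F^*_{L^p}(G)}.
\]
Taking the supremum over all isometric representations $\rho$ on $L^q$-spaces gives $\|f\|_{F_{L^q}(G)}\le\|f\|_{F^*_{L^p}(G)}$; applying this to both $f$ and $f^*$ and using $\|f^*\|_{F^*_{L^p}(G)}=\|f\|_{F^*_{L^p}(G)}$ gives $\|f\|_{F^*_{L^q}(G)}=\max\{\|f\|_{F_{L^q}(G)},\|f^*\|_{F_{L^q}(G)}\}\le\|f\|_{F^*_{L^p}(G)}$, which is what was needed.

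The step I expect to be the real obstacle is the structural one: realizing an abstract isometric representation on an $L^q$-space in Koopman--cocycle normal form over a genuine $G$-action (the measurable patching behind the group version of Lamperti's theorem is the delicate point), and then verifying that the two boundary families $T_{\mathrm is}$ and $T_{1+\mathrm is}$ really are honest isometric $L^p$- resp.\ $L^{p'}$-representations of $G$ evaluated at $f$ --- it is exactly the $\mathbb T$-cocycle twist $J_g^{-\mathrm is\beta}$ that makes this true and that keeps the argument from only producing bounds in terms of $\|\,|f|\,\|$. By comparison, checking the admissibility hypotheses of Stein's theorem, the identification of $F_{L^{p'}}(G)$ with the involution-twisted $F_{L^p}(G)$, and the reduction of the $p=1$ and $q=2$ endpoints are all routine.
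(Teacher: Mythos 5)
This is correct and is essentially the paper's own proof: Banach--Lamperti puts an arbitrary isometric $L^q$-representation in Koopman--cocycle form, the family $T_z$ obtained by moving the exponent of the Radon--Nikodym derivative along the strip (with the unimodular twist $J_g^{-i\gamma\beta}$ keeping the boundary operators honest isometric representations) is admissible, and Stein's theorem interpolates the two boundary bounds, both of which are dominated by $\|f\|_{F^*_{L^p}}$ directly from the definition of the symmetrized norm (so your detour through $f^*$ and the identification $\|f\|_{F_{L^{p'}}}=\|f^*\|_{F_{L^p}}$, while true, is not needed). The only point to repair is the endpoint $q=2$: the contraction $F^*_{L^p}(G)\to C^*(G)$ in the introductory diagram is itself an instance of the theorem being proved, so invoking it is circular --- either use your parenthetical route through Theorem~2.30 of Gardella--Thiel, or argue as the paper does, reducing an arbitrary unitary representation (after restriction of scalars) to one contained in the Koopman representation of a p.m.p.\ action via the Gaussian functor, to which the interpolated bound already applies.
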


The notion of amenability goes back to work of von Neumann in \cite{vonNeumannTheorieDesMasses} and is originally defined in the context of measure theory. Since then, it has proven itself a fundamental concept with equivalent characterizations coming from many different corners of mathematics. We refer to \cite{Runde2002Amenability}, \cite{Pier1984AmenableLCGP}, Section 2.6 in \cite{BrownOsawa} or Appendix G in \cite{BekkaDeLaHarpeValette} for introductions to the topic and for an overview of its many connections. In $C^*$-algebraic terms, it is the property that the universal and reduced group $C^*$-algebras coincide canonically, or, equivalently, that the trivial representation extends to a $^*$-representation of $C^*_r(G)$. With the diagram presented in the previous paragraph in mind, the question of a symmetrized $L^p$-generalization naturally arizes: Is $G$ amenable if and only if $F^*_{L^p}(G)$ and $F^*_{\lambda_p}(G)$ coincide canonically? We show in Theorem \ref{introthm:amenability_and_Fstarp} that the answer to this question is affirmative. This extends Proposition 3.1 of \cite{SameiWiersma2018Exotic}, which states that $G$ is amenable if and only if the trivial representation extends to a $^*$-representation of $F^*_{\lambda_p}(G)$. Further, we build upon the work of Cowling in \cite{Cowling1979Littlewool-Paley} and Runde in \cite{Runde2004repQSLP} and give a characterization of amenability in terms of the Banach space dual of the symmetrized $p$-pseudofunction algebra, $F^*_{\lambda_p}(G)'$, and the $p$-Fourier-Stieltjes algebra, $B_p(G)$, introduced by Runde in \cite{Runde2004repQSLP}. This result is also included in Theorem \ref{introthm:amenability_and_Fstarp}.
\begin{introthm}\label{introthm:amenability_and_Fstarp}
Let $G$ be a locally compact group and let $1<p,p'<\infty$ be Hölder conjugates. The following are equivalent:
\begin{enumerate}[(i)]
\item $G$ is amenable,
\item $F^*_{\lambda_p}(G)'$ is canonically isometrically isomorphic to the sum space $B_p(G)+B_{p'}(G)$,
\item The canonical map $F^*_{L^p}(G)\rightarrow F^*_{\lambda_p}(G)$ is an isometric isomorphism,
\item The canonical map $F^*_{L^p}(G)\rightarrow F^*_{\lambda_p}(G)$ is an isomorphism,
\item The trivial representation $1_G$ extends to a $^*$-representation of $F^*_{\lambda_p}(G)$.
\end{enumerate}
\end{introthm}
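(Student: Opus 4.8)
The plan is to take the equivalence $(i)\Leftrightarrow(v)$ as known --- it is Proposition~3.1 of \cite{SameiWiersma2018Exotic} --- and to close the diagram of implications by proving $(i)\Rightarrow(iii)\Rightarrow(iv)\Rightarrow(v)$ and, separately, $(i)\Rightarrow(ii)\Rightarrow(v)$. Throughout I would use the concrete descriptions of the two norms on $L^1(G)$: $\lVert f\rVert_{F^*_{L^p}(G)}$ is the supremum of $\lVert\pi(f)\rVert$ over all isometric representations $\pi$ of $G$ on an $L^p$- or an $L^{p'}$-space --- the two families being interchanged by passing to the contragredient, which is exactly why the completion is a Banach $^*$-algebra --- while $\lVert f\rVert_{F^*_{\lambda_p}(G)}=\max\{\lVert\lambda_p(f)\rVert,\lVert\lambda_{p'}(f)\rVert\}$, using $\check\lambda_p\cong\lambda_{p'}$.

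For $(i)\Rightarrow(iii)$ I would run the $L^p$-analogue of Fell's absorption principle. Amenability of $G$ provides almost-invariant unit vectors for $\lambda_p$ in $L^p(G)$, that is, $1_G\prec\lambda_p$. Given an isometric representation $\pi$ of $G$ on $E=L^p(\mu)$, the formula $(U\xi)(h)=\pi(h)^{-1}\xi(h)$ defines an invertible isometry of $L^p(G;E)$ --- which is again an $L^p$-space --- intertwining $\pi\otimes\lambda_p$ with $1_E\otimes\lambda_p$; hence $\pi\cong\pi\otimes1_G\prec\pi\otimes\lambda_p\cong1_E\otimes\lambda_p\prec\lambda_p$. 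So every isometric $L^p$-representation is weakly contained in $\lambda_p$, and, symmetrically, every isometric $L^{p'}$-representation in $\lambda_{p'}$; since $\lambda_p$ and $\lambda_{p'}$ are themselves among the representations defining $\lVert\cdot\rVert_{F^*_{L^p}(G)}$, comparing operator norms gives $\lVert f\rVert_{F^*_{L^p}(G)}=\lVert f\rVert_{F^*_{\lambda_p}(G)}$ for all $f\in L^1(G)$, so the canonical map of $(iii)$ is an isometric isomorphism. The implication $(iii)\Rightarrow(iv)$ is immediate. For $(iv)\Rightarrow(v)$: the trivial representation is an isometric representation on $\C=L^p(\mathrm{pt})$ which is its own contragredient, hence $1_G$ extends to a contractive $^*$-representation $\widetilde1\colon F^*_{L^p}(G)\to\C$; writing $\Phi\colon F^*_{L^p}(G)\to F^*_{\lambda_p}(G)$ for the canonical map, $\Phi$ and (under $(iv)$, by the open mapping theorem) also $\Phi^{-1}$ restrict to the identity on the dense $^*$-subalgebra $L^1(G)$, so both are $^*$-homomorphisms and $\widetilde1\circ\Phi^{-1}$ is a $^*$-representation of $F^*_{\lambda_p}(G)$ extending $1_G$.

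To bring in $(ii)$ I would argue $(ii)\Rightarrow(v)$ and $(i)\Rightarrow(ii)$. For $(ii)\Rightarrow(v)$: the constant function $1$ is a coefficient function of the trivial representation, so $1\in B_p(G)$ with $\lVert1\rVert_{B_p(G)}\le1$, whence $1\in B_p(G)+B_{p'}(G)$ with infimal sum-norm at most $1$; if $(ii)$ holds, $1$ is then a norm-one functional on $F^*_{\lambda_p}(G)$, i.e.\ $\lvert\int_G f\rvert\le\lVert f\rVert_{F^*_{\lambda_p}(G)}$ for $f\in L^1(G)$, which is exactly the assertion that $1_G$ extends contractively to a $^*$-representation of $F^*_{\lambda_p}(G)$. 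For $(i)\Rightarrow(ii)$ I would combine the description, obtained earlier in the paper, of the dual of the symmetrized pseudofunction algebra of a representation on a reflexive Banach space --- applied to $\lambda_p$ on $L^p(G)$, which identifies $F^*_{\lambda_p}(G)'$ with the closed span inside $B_p(G)+B_{p'}(G)$ of the coefficient functions of $\lambda_p$ and of $\lambda_{p'}$ --- with the absorption argument above, now applied to isometric $QSL^p$-representations: under amenability each such representation, and therefore each element of $B_p(G)$, is a limit of coefficient functions of $\lambda_p$, so that closed span exhausts $B_p(G)$, and symmetrically $B_{p'}(G)$. For the norm statement one must check that this identification is isometric for the infimal sum norm on $B_p(G)+B_{p'}(G)$, and here I would rely on the results of Cowling \cite{Cowling1979Littlewool-Paley} and Runde \cite{Runde2004repQSLP} relating $B_p(G)$ and the Herz algebra $A_p(G)$ under amenability.

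The main difficulty is this last isometric identification in $(i)\Rightarrow(ii)$: obtaining weak containment of all $QSL^p$-representations in $\lambda_p$ is routine once the absorption computation is in place, but verifying that the induced identification of $F^*_{\lambda_p}(G)'$ with $B_p(G)+B_{p'}(G)$ is norm-preserving --- rather than merely a Banach-space isomorphism --- demands a careful comparison of the dual norm with the infimal sum norm, which is exactly the point at which the inputs of Cowling and Runde are needed.
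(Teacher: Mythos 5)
Your proposal is correct, but it closes the cycle of implications along a genuinely different path from the paper, so a comparison is worthwhile. The paper proves the single cycle (i)$\Rightarrow$(ii)$\Rightarrow$(iii)$\Rightarrow$(iv)$\Rightarrow$(v)$\Rightarrow$(i): the step (i)$\Rightarrow$(ii) combines Runde's isometric identification $F_{\lambda_q}(G)'\cong B_q(G)$ for amenable $G$ with Theorem \ref{thm:Bpi+Bpiprime_is_dual_of_Fstarpi}; the step (ii)$\Rightarrow$(iii) is pure duality --- the inclusion $F^*_{\lambda_p}(G)'\hookrightarrow B_p(G)+B_{p'}(G)$ is the Banach space adjoint of the canonical contraction $F^*_{QSL^p}(G)\rightarrow F^*_{\lambda_p}(G)$, and an adjoint is an isometric isomorphism if and only if the original map is (Lemma 3.6 of \cite{GardellaThiel2014GpAlgActingOnLp}), with $F^*_{L^p}(G)$ squeezed in between; and (v)$\Rightarrow$(i) is reproved from scratch via Kesten's theorem together with the interpolation bound $\norm{f}{F^*_{\lambda_p}}\leq\norm{f}{1}^{1-\theta}\norm{\lambda_2(f)}{}^{\theta}$ and the automatic contractivity of $^*$-representations of a Banach $^*$-algebra with isometric involution. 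You instead (a) import (i)$\Leftrightarrow$(v) wholesale from Samei--Wiersma, which is legitimate since the paper itself attributes the implication there and reproves it only for completeness; (b) prove (i)$\Rightarrow$(iii) directly by an $L^p$-Fell absorption argument with Reiter-type almost invariant vectors, which is sound (your intertwiner computation and the elementary-tensor estimate $\norm{\pi(f)\eta\otimes\xi_i}{}=\norm{\pi(f)\eta}{}\norm{\xi_i}{}$ go through on $L^p(G\times\Omega)$) but essentially re-derives the representation-theoretic content of Theorem 3.7 of \cite{GardellaThiel2014GpAlgActingOnLp}, which the paper deliberately bypasses by routing everything through duals; and (c) add a direct (ii)$\Rightarrow$(v) via the constant function $1\in B_p(G)$, an argument not in the paper but correct and arguably the cleanest way to link (ii) to (v). What the paper's route buys is economy: once Theorem \ref{thm:Bpi+Bpiprime_is_dual_of_Fstarpi} is in hand, no absorption principle is needed anywhere. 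What your route buys is a more self-contained, representation-level proof of (i)$\Rightarrow$(iii) that does not pass through the Fourier--Stieltjes machinery.

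Concerning the difficulty you flag in (i)$\Rightarrow$(ii): it dissolves once you use the two ingredients in the right order. Theorem \ref{thm:Bpi+Bpiprime_is_dual_of_Fstarpi} applied to $\lambda_p$ identifies $F^*_{\lambda_p}(G)'$ isometrically with $B_{\lambda_p}(G)+B_{\lambda_{p'}}(G)$ carrying the infimal sum norm; under amenability, Cowling--Runde--Neufang give $B_{\lambda_p}(G)=B_p(G)$ and $B_{\lambda_{p'}}(G)=B_{p'}(G)$ isometrically. Since the infimal sum norm on a sum space is determined entirely by the norms of the two summands, the isometric identification of the summands automatically yields an isometric identification of the sum spaces; no separate comparison of the dual norm with the infimal sum norm is required, and no absorption argument for $QSL^p$-representations is needed at this point.
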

The equivalence of amenability and the properties (iii) and (iv) of Theorem \ref{introthm:amenability_and_Fstarp} should be seen as parallel to Theorem 3.7 in \cite{GardellaThiel2014GpAlgActingOnLp} where the analogous equivalences are established in the non-symmetrized setting.

To establish the equivalence of amenability and property (ii) of Theorem \ref{introthm:amenability_and_Fstarp}, we characterize the Banach space dual of the symmetrized pseudofunction algebra $F^*_{\pi}(G)$ belonging to a general isometric representation $\pi$ of $G$ on a reflexive Banach space, e.g., an $L^p$- or $QSL^p$-space with $1<p<\infty$. This may be of independent interest.
\begin{introthm}\label{introthm:Bpi+Bpiprime_is_dual_of_Fstarpi}
Let $(\pi,E)$ be an isometric representation of the locally compact group $G$ on a reflexive Banach space $E$. The canonical identification of $L^1(G)'$ and $L^{\infty}(G)$ restricts to an isometric isomorphism between $F_{\pi}^*(G)'$ and the sum space $B_{\pi}(G)+B_{\pi'}(G)$.
\end{introthm}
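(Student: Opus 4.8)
The plan is to realize $F_{\pi}^{*}(G)$ as the diagonal inside an $\ell^{\infty}$-direct sum of two \emph{non-symmetrized} pseudofunction algebras and then dualize, reducing everything to the duality $F_{\rho}(G)'=B_{\rho}(G)$ for a single isometric representation $\rho$ on a reflexive Banach space. I would take that non-symmetrized identification as the input (for reflexive coefficient spaces it rests on the work of Runde and Cowling, or can be set up as a preliminary): the canonical contraction $L^{1}(G)\to F_{\rho}(G)$ dualizes to an isometric identification $F_{\rho}(G)'=B_{\rho}(G)$, where $B_{\rho}(G)$ is read inside $L^{\infty}(G)$ via the $L^{1}$--$L^{\infty}$ pairing. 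Reflexivity is essential here; the substantive direction is that a functional on $L^{1}(G)$ bounded for $\|\rho(\cdot)\|$ actually lies in $B_{\rho}(G)$ with the correct norm. I would apply this to $\pi$ on $E$ and to the dual representation $\pi'$ on $E'$ (again an isometric representation on a reflexive space), obtaining $F_{\pi}(G)'=B_{\pi}(G)$ and $F_{\pi'}(G)'=B_{\pi'}(G)$ in $L^{\infty}(G)$.

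Next I would pass to the diagonal. By the construction of the symmetrized algebra the norm on $F_{\pi}^{*}(G)$ restricted to $L^{1}(G)$ is $\|f\|_{F_{\pi}^{*}}=\max\bigl(\|f\|_{F_{\pi}},\|f^{*}\|_{F_{\pi}}\bigr)$, and $\|f^{*}\|_{F_{\pi}}=\|f\|_{F_{\pi'}}$ once $\pi'$ is the representation on $E'$ dual to $\pi$ in the sense compatible with the $L^{1}$-involution. Hence $f\mapsto(f,f)$ extends to an isometric embedding $\iota\colon F_{\pi}^{*}(G)\hookrightarrow F_{\pi}(G)\oplus_{\infty}F_{\pi'}(G)$, whose image $\Delta$ is the closure of the diagonally embedded $L^{1}(G)$, a closed subspace since it is the isometric image of the complete space $F_{\pi}^{*}(G)$. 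Combining the isometric identifications $\bigl(X\oplus_{\infty}Y\bigr)'=X'\oplus_{1}Y'$ and $\Delta'\cong\bigl(F_{\pi}(G)\oplus_{\infty}F_{\pi'}(G)\bigr)'\big/\Delta^{\perp}$ (the latter holds for any closed subspace, with no weak-$*$ hypothesis) with the first step yields $F_{\pi}^{*}(G)'\cong\bigl(B_{\pi}(G)\oplus_{1}B_{\pi'}(G)\bigr)\big/\Delta^{\perp}$.

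Finally I would identify this quotient with the sum space $B_{\pi}(G)+B_{\pi'}(G)$. Through $\iota$, a pair $(u_{1},u_{2})\in B_{\pi}(G)\oplus_{1}B_{\pi'}(G)$ acts on $f\in L^{1}(G)$ by $\int f\,u_{1}+\int f\,u_{2}=\int f\,(u_{1}+u_{2})$, so under the restriction map $F_{\pi}^{*}(G)'\hookrightarrow L^{\infty}(G)$ it is carried to the function $u_{1}+u_{2}$; thus the image of $F_{\pi}^{*}(G)'$ in $L^{\infty}(G)$ is exactly $B_{\pi}(G)+B_{\pi'}(G)$, and $\Delta^{\perp}$ is precisely the set of pairs $(u,-u)$ with $u\in B_{\pi}(G)\cap B_{\pi'}(G)$, i.e.\ those summing to $0$ in $L^{\infty}(G)$. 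The quotient norm of the class of a function $u$ is therefore $\inf\{\|u_{1}\|_{B_{\pi}}+\|u_{2}\|_{B_{\pi'}}:u=u_{1}+u_{2}\}$, which is exactly the norm of $B_{\pi}(G)+B_{\pi'}(G)$; this is the asserted isometric isomorphism.

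The diagonal-and-dualize step is soft, so the real work is in its two ingredients. The first is the non-symmetrized identification $F_{\rho}(G)'=B_{\rho}(G)$ for general reflexive coefficient spaces, where reflexivity is needed to exclude ``singular'' functionals and to obtain the isometric normalization. The second is the identity $\|f^{*}\|_{F_{\pi}}=\|f\|_{F_{\pi'}}$: this forces one to pin down exactly which representation on $E'$ is ``dual to $\pi$'' --- the contragredient or its complex conjugate --- so that the $L^{1}$-involution is transported correctly and the space $B_{\pi'}(G)$ in the statement is the right one; the complex-conjugate bookkeeping is the one genuinely fiddly point.
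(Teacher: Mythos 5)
Your argument is correct and is essentially the proof in the paper: both rest on the fact that the max-norm defining $F^*_{\pi}(G)$ realizes it isometrically as a closed subspace of an $\ell^{\infty}$-direct sum, whose dual is then computed via Hahn--Banach as a quotient of the $\ell^{1}$-sum of the duals and identified with the sum space exactly as you do; the only cosmetic difference is that the paper embeds into $\mathcal{L}(E)\oplus_{\infty}\mathcal{L}(E')$ and composes the resulting pair of functionals with $\pi,\pi'$ to land in $B_{\pi}(G)$ and $B_{\pi'}(G)$, whereas you embed diagonally into $F_{\pi}(G)\oplus_{\infty}F_{\pi'}(G)$ and invoke $F_{\rho}(G)'=B_{\rho}(G)$. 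One small correction: that non-symmetrized duality holds for arbitrary isometric Banach-space representations (it follows at once from $L^1(G)'=L^{\infty}(G)$ and the definition of $B_{\rho}(G)$ as the functionals dominated by $\norm{\rho(\cdot)}{}$, with no ``singular'' functionals to exclude since $L^1(G)$ is dense in $F_{\rho}(G)$); reflexivity is needed only so that $\pi'$ is itself a strongly continuous isometric representation with $\pi''=\pi$, and your identity $\norm{f^*}{F_{\pi}}=\norm{f}{F_{\pi'}}$ is not actually required, since the $F^*_{\pi}$-norm is defined directly as $\max\{\norm{\pi(f)}{},\norm{\pi'(f)}{}\}$, which makes the diagonal embedding isometric by definition.
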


Here, the norm on the sum space is given, for $\varphi\in B_{\pi}(G)+B_{\pi'}(G)$, by
\[
\norm{\varphi}{B_{\pi}+B_{\pi'}}=\inf\set{\norm{\varphi_0}{B_{\pi}}+\norm{\varphi_1}{B_{\pi'}}}{\varphi=\varphi_0+\varphi_1,\varphi_0\in B_{\pi}(G),\varphi_1\in B_{\pi'}(G)}.
\]

This paper is organized as follows: In Section \ref{sec:pre}, we recall the $L^p$-representation theory for a locally compact group when $p\neq2$. Further, we recall the construction of a symmetrized pseudofunction algebra, and we introduce the tools from complex interpolation theory which we shall need in the paper. In Section \ref{sec:interpolations}, we prove Theorem \ref{introthm:FstarLp->FstarLq}. In Section \ref{sec:dual_of_Fstar}, we study the Banach space dual of a pseudofunction algebra and prove Theorem \ref{introthm:Bpi+Bpiprime_is_dual_of_Fstarpi}. Finally, in Section \ref{sec:amenability}, we discuss applications to amenability and prove Theorem \ref{introthm:amenability_and_Fstarp}.

\paragraph{Acknowledgements}
The author thanks Nadia Larsen for many discussions, Mikael de la Salle for sharing his insights into complex interpolation theory and Matthew Wiersma for sharing how to extend the proof of Corollary \ref{cor:KestenHulanickiReiter} to the non-$\sigma$-finite case. The author is grateful to Eusebio Gardella and Hannes Thiel for many useful comments on an earlier draft of this paper that helped improve its presentation.

%\tableofcontents

%%%%%%%%%%%%%%%%%% PRELIMINARIES %%%%%%%%%%%%%%%%%%

\section{Preliminaries}\label{sec:pre}
\paragraph{Group actions on $L^p$-spaces}
Let $(\Omega,\mu)$ be a $\sigma$-finite measure space and let $1<p<\infty$, $p\neq2$ be fixed. The group $\isom(L^p(\Omega,\mu))$ of surjective isometries of $L^p(\Omega,\mu)$ is described completely by the Banach-Lamperti Theorem, which we recall in Theorem \ref{thm:Banach-Lamperti} below. There are two basic types of isometries on $L^p(\Omega,\mu)$:
\begin{enumerate}[1.]
\item We denote by $L^0(\Omega,\mu;\T)$ the collection of measurable functions on $\Omega$ with values in the unit circle $\T$ with two functions identified if they differ only on a null set. For each $c\in L^0(\Omega,\mu;\T)$, the associated \emph{multiplier} $m_c$ is the surjective isometry on $L^p(\Omega,\mu)$ given by, for $\xi\in L^p(\Omega,\mu)$,
\[m_c(\xi)=c\cdot \xi.\]
With multiplication defined pointwise, $L^0(\Omega,\mu;\T)$ is a group, and we obtain an injective group homomorphism $m:L^0(\Omega,\mu;\T)\rightarrow\isom(L^p(\Omega,\mu))$ by setting $m(c)=m_c$.
\item We denote by $\aut(\Omega,[\mu])$ the group of all bi-measurable transformations $\sigma$ of $\Omega$ that leave $\mu$ quasi-invariant, i.e., the push forward measure $\sigma_*\mu$ of $\mu$ under $\sigma$ has the same null sets as $\mu$. This assumption ensures the existence of the Radon-Nikodym derivative $\tfrac{\mathrm{d}\sigma_*\mu}{\mathrm{d}\mu}$, which is a real-valued and non-negative function on $\Omega$. Define for each $\xi\in L^p(\Omega,\mu)$,
\begin{align*}
u_{\sigma}\xi=\left(\frac{\mathrm{d}\sigma_*\mu}{\mathrm{d}\mu}\right)^{1/p}\xi\circ\sigma^{-1}
\end{align*}
Then $u_{\sigma}$ is a surjective isometry on $L^p(X,\mu)$. We obtain an injective map $u:\aut(\Omega,[\mu])\rightarrow \isom(L^p(\Omega,\mu))$ by setting $u(\sigma)=u_{\sigma}$. It follows from the uniqueness part of the Radon-Nikodym Theorem that this is a group homomorphism.
\end{enumerate}

For each pair $c\in L^0(\Omega,\mu;\T)$ and $\sigma\in\aut(\Omega,[\mu])$, a straight forward computation verifies that they satisfy the covariance relation $u_{\sigma}m_cu_{\sigma}^{-1} = m_{c\circ\sigma^{-1}}$. Thus, we have an injective group homomorphism 
\begin{center}
\begin{tikzcd}[row sep=tiny, column sep = normal]
L^0(\Omega,\mu;\T)\rtimes\aut(\Omega,[\mu]) \arrow{r}{} & \isom(L^p(\Omega,\mu))\\
\qquad (c,\sigma)\qquad  \arrow[mapsto]{r}{} & \qquad m_cu_{\sigma}\qquad \\
\end{tikzcd}
\end{center}
The content of the Banach-Lamperti theorem is that this map is surjective when $p\neq 2$. This was proven by Lamperti in \cite{Lamperti1958Isom} and prior to that claimed without proof in the monograph \cite{Banach1932} by Banach in the special case of the interval equipped with the Lebesgue measure. A modern account in the general setting of Boolean algebras can be found in \cite{Gardella2019ModernLook}.
\begin{thm}[Banach-Lamperti]\label{thm:Banach-Lamperti}
Let $(\Omega,\mu)$ be a $\sigma$-finite measure space, let $1\leq p<\infty$, $p\neq2$, and let $T$ be a surjective isometry on $L^p(\Omega,\mu)$. There exist unique $c\in L^0(\Omega,\mu;\T)$ and $\sigma\in\aut(\Omega,[\mu])$ such that $T=m_cu_{\sigma}$.
\end{thm}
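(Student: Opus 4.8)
The plan is to reconstruct the pair $(c,\sigma)$ from the way $T$ moves around the supports of functions. The one place the hypothesis $p\neq2$ is used is an elementary characterization of disjointness: for scalars $a,b$ one has $\abs{a+b}^p+\abs{a-b}^p\leq 2(\abs a^p+\abs b^p)$ when $1\leq p\leq 2$ and the reverse inequality when $2\leq p<\infty$, and in both cases---because $p\neq2$---equality holds precisely when $ab=0$. Integrating against $\mu$, this says that $\xi,\eta\in L^p(\Omega,\mu)$ have essentially disjoint supports if and only if $\norm{\xi+\eta}{p}^p+\norm{\xi-\eta}{p}^p=2\bigl(\norm{\xi}{p}^p+\norm{\eta}{p}^p\bigr)$. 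Since this condition involves only the norm and the linear structure, it is preserved by the surjective isometry $T$ and by $T^{-1}$; hence $T$ carries disjointly supported pairs exactly to disjointly supported pairs.

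From this I would manufacture a transformation of the measure algebra. For $A\subseteq\Omega$ measurable with $\mu(A)<\infty$, let $\theta(A)$ be a representative of the essential support of $T(\mathbf 1_A)$. Disjointness-preservation, together with continuity of $T$, shows that $\theta$ respects (countable) disjoint unions and passes to a well-defined injective $\sigma$-complete Boolean homomorphism of the measure algebra of $(\Omega,[\mu])$; running the argument for $T^{-1}$ shows it is in fact an automorphism preserving $[\mu]$-null sets in both directions. One then invokes the fact that every Boolean automorphism of the measure algebra of a $\sigma$-finite measure space is spatially implemented: there is $\sigma\in\aut(\Omega,[\mu])$ with $\theta(A)=\sigma(A)$ mod null sets for all $A$. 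I expect \emph{this} step to be the main obstacle; it is the technically delicate part of the theorem and is precisely where $\sigma$-finiteness enters---one either proves it through the structure theory of measure algebras or imports it, as in the Boolean-algebra treatment of \cite{Gardella2019ModernLook}.

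Now set $S\defeq u_{\sigma}^{-1}T$, another surjective isometry. Because $u_{\sigma}^{-1}=u_{\sigma^{-1}}$ undoes the support shift $\theta$, the operator $S$ is support-preserving: $\supp S(\mathbf 1_B)=B$ mod null sets for every $B$ of finite measure, and, approximating $\mathbf 1_A\xi$ in $L^p$ by simple functions supported in $A$, also $\supp S(\mathbf 1_A\xi)\subseteq A$ for all $\xi$. Applying this to $A$ and its complement and adding, $S(\mathbf 1_A\xi)=\mathbf 1_A\,S(\xi)$; by linearity and continuity $S$ is a module map over $L^{\infty}(\Omega,\mu)$, i.e.\ $S(\varphi\xi)=\varphi\,S(\xi)$ for $\varphi\in L^{\infty}(\Omega,\mu)$ and $\xi\in L^p(\Omega,\mu)$. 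Writing $\Omega=\bigcup_n\Omega_n$ with $\Omega_n\uparrow\Omega$ of finite measure, the functions $S(\mathbf 1_{\Omega_n})$ are mutually consistent (again by modularity) and glue to a measurable $c$ on $\Omega$; the isometry condition $\norm{S(\mathbf 1_A)}{p}=\norm{\mathbf 1_A}{p}$ tested on $A\subseteq\Omega_n$ forces $\abs c=1$ a.e., so $c\in L^0(\Omega,\mu;\T)$, and modularity gives $S(\xi)=c\,\xi$ first for bounded $\xi$ of finite-measure support and then for all $\xi\in L^p(\Omega,\mu)$ by density. Hence $T=u_{\sigma}m_c$, and the covariance relation $u_{\sigma}m_cu_{\sigma}^{-1}=m_{c\circ\sigma^{-1}}$ rewrites this as $T=m_{c\circ\sigma^{-1}}u_{\sigma}$, establishing existence.

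For uniqueness, if $m_{c_1}u_{\sigma_1}=m_{c_2}u_{\sigma_2}$ then, writing $\tau\defeq\sigma_2\sigma_1^{-1}$, the operator $u_{\tau}=m_{c_2^{-1}c_1}$ is simultaneously a multiplier and a $u$-isometry. Comparing supports of images of indicator functions---a multiplier fixes $\supp\mathbf 1_A=A$, whereas $u_{\tau}\mathbf 1_A$ is supported on $\tau(A)$---forces $\tau(A)=A$ mod null sets for every $A$, i.e.\ $\tau=\id$ in $\aut(\Omega,[\mu])$; then $\tau_*\mu=\mu$, so the Radon--Nikodym factor is $1$ and $u_{\tau}=\id$, whence $m_{c_2^{-1}c_1}=\id$ and $c_1=c_2$. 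It is worth noting that $p=2$ is genuinely excluded: there the norm identity above reduces to the parallelogram law and holds for \emph{all} pairs, so the argument collapses---consistently with the fact that unitaries such as the Fourier transform on $L^2(\R)$ are not of the form $m_cu_{\sigma}$.
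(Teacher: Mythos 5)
The paper does not prove this theorem: it is quoted as a classical result, with the proof deferred to Lamperti's original article and to the Boolean-algebra treatment in \cite{Gardella2019ModernLook}. Your argument is, in outline, exactly Lamperti's proof, and it is sound: the Clarkson-type scalar inequality with its equality case $ab=0$ (valid for $1\leq p<\infty$, $p\neq 2$) integrates to the disjointness criterion, the isometry therefore preserves disjointness of supports, the induced map on supports of indicators is a $\sigma$-complete Boolean automorphism of the measure algebra, and once that automorphism is realized by a point map $\sigma$ the remaining operator $u_{\sigma}^{-1}T$ is band-preserving, hence an $L^{\infty}$-module map, hence multiplication by a unimodular function; your gluing over a $\sigma$-finite exhaustion and your uniqueness argument are both fine. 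The one step you import rather than prove --- spatial implementation of the Boolean automorphism --- is indeed the genuinely delicate ingredient, and you are right to single it out: for an arbitrary $\sigma$-finite measure space a measure-algebra automorphism need not be induced by a point transformation (which is why Lamperti's theorem is often stated in terms of regular set isomorphisms, and why \cite{Gardella2019ModernLook} works at the level of measure/Boolean algebras); point realization requires a standardness hypothesis such as von Neumann's theorem for Lebesgue spaces. Since the paper itself states the result with a point map and cites the literature for the proof, your treatment is at the same level of rigor as the source, and no correction is needed beyond acknowledging that this realization step is a cited black box rather than something your argument establishes.
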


Let $G$ be a locally compact group and let $(\Omega,\mu)$ be a $\sigma$-finite measure space. An isometric representation of $G$ on $L^p(\Omega,\mu)$ is a strongly continuous group homomorphism $G\rightarrow\isom(L^p(\Omega,\mu))$. As a corollary to the Banach-Lamperti Theorem, we get a complete description of the isometric representations of $G$ on $L^p(\Omega,\mu)$ when $p\neq2$. Before stating this in Corollary \ref{cor:isom_reps_on_Lp}, we shall need to recall the definition of a measure class preserving action and a $1$-cocycle for such an action.

A measure class preserving action of $G$ on a $\sigma$-finite measure space $(\Omega,\mu)$ is a group homomorphism $\sigma:G\rightarrow\aut(\Omega,[\mu])$. We write $G\act{\sigma}(\Omega,\nu)$ for the action given by $\sigma$. We shall often omit $\sigma$ from the notation and write $t.\omega$ rather than $\sigma_t(\omega)$, for $t\in G$ and $\omega\in\Omega$. A ($\T$-valued) $1$-cocycle for the action $G\act{\sigma}(\Omega,\nu)$ is a map $c:G\times\Omega\rightarrow\T$ which satisfies the $1$-cocycle relation $c_{st}=c_s\cdot(c_t\circ\sigma_s^{-1})$ $\mu$-a.e., for every pair $s,t\in G$, and such that $c_t$ is a measurable map, for every $t\in G$. The set of all $1$-cocycles for $\sigma$ is denoted by $Z^1(\sigma;\T)$. Given a measure class preserving action $G\act{\sigma}(\Omega,\nu)$ and a $1$-cocycle $c\in Z^1(\sigma;\T)$, we construct an isometric representation of $G$ on $L^p(\Omega,\mu)$ as follows: For $t\in G$, set $\pi_{p,\sigma,c}(t)=m_{c_t}u_{\sigma_t}$. That is, for $\xi\in L^p(\Omega,\mu)$ and $\omega\in\Omega$,
\begin{align}\label{eqn:def:pi_p,sigma,c}
\pi_{p,\sigma,c}(t)\xi(\omega)=c_t(\omega)\left(\frac{\mathrm{d}s.\mu}{\mathrm{d}\mu}\right)^{1/p}(\omega)\xi(s^{-1}.\omega).
\end{align}

\begin{cor}\label{cor:isom_reps_on_Lp}
Let $G\act{\sigma}(\Omega,\mu)$ be a measure class preserving action of a locally compact group on a $\sigma$-finite measure space, let $c\in Z^1(\sigma;\T)$ and let $1\leq p<\infty$. Then $\pi_{p,\sigma,c}$ defined in equation (\ref{eqn:def:pi_p,sigma,c}) is an isometric representation of $G$ on $L^p(\Omega,\mu)$. Moreover, if $p\neq2$, all isometric representations on $L^p(\Omega,\mu)$ have this form.
\end{cor}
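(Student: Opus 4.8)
The plan is to treat the two assertions separately: the forward direction is essentially a computation with the covariance and cocycle relations, while the ``moreover'' part falls out of the uniqueness clause of Theorem~\ref{thm:Banach-Lamperti}.

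For the forward direction, fix $t\in G$ and write $\pi_{p,\sigma,c}(t)=m_{c_t}u_{\sigma_t}$. Since $c_t\in L^0(\Omega,\mu;\T)$ and $\sigma_t\in\aut(\Omega,[\mu])$, both $m_{c_t}$ and $u_{\sigma_t}$ are surjective isometries of $L^p(\Omega,\mu)$ by the discussion preceding Theorem~\ref{thm:Banach-Lamperti}, hence so is their composite. For the homomorphism property I would compute, for $s,t\in G$,
\[
\pi_{p,\sigma,c}(s)\pi_{p,\sigma,c}(t)=m_{c_s}\bigl(u_{\sigma_s}m_{c_t}u_{\sigma_s}^{-1}\bigr)u_{\sigma_s}u_{\sigma_t}=m_{c_s}m_{c_t\circ\sigma_s^{-1}}u_{\sigma_{st}}=m_{c_s\cdot(c_t\circ\sigma_s^{-1})}u_{\sigma_{st}}=m_{c_{st}}u_{\sigma_{st}},
\]
using, in order, the covariance relation $u_{\sigma}m_cu_{\sigma}^{-1}=m_{c\circ\sigma^{-1}}$ together with the fact that $u$ and $\sigma$ are group homomorphisms, that multipliers compose by pointwise multiplication, and the $1$-cocycle relation; specializing the cocycle relation to $s=t=e$ gives $c_e\equiv1$ $\mu$-a.e., whence $\pi_{p,\sigma,c}(e)=\id$. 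The delicate point is strong continuity of $t\mapsto\pi_{p,\sigma,c}(t)\xi$ for $\xi\in L^p(\Omega,\mu)$; this should follow from the continuity carried by the action $\sigma$ and the cocycle $c$ by approximating $\xi$ with simple functions supported on sets of finite measure and invoking continuity of $c$ and of the Radon--Nikodym cocycle $(t,\omega)\mapsto\bigl(\tfrac{\mathrm{d}t.\mu}{\mathrm{d}\mu}\bigr)^{1/p}(\omega)$. I expect this to be the main obstacle in the forward direction, the rest being purely algebraic.

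For the ``moreover'' part, assume $p\neq2$ and let $\pi$ be an isometric representation of $G$ on $L^p(\Omega,\mu)$. For each $t\in G$ the operator $\pi(t)$ is a surjective isometry, so Theorem~\ref{thm:Banach-Lamperti} provides \emph{unique} $c_t\in L^0(\Omega,\mu;\T)$ and $\sigma_t\in\aut(\Omega,[\mu])$ with $\pi(t)=m_{c_t}u_{\sigma_t}$; set $\sigma\colon t\mapsto\sigma_t$ and define $c\colon G\times\Omega\to\T$ by $c(t,\omega)=c_t(\omega)$, each $c_t$ being measurable by construction. Running the displayed computation in reverse, $\pi(st)=\pi(s)\pi(t)$ gives $m_{c_{st}}u_{\sigma_{st}}=m_{c_s\cdot(c_t\circ\sigma_s^{-1})}u_{\sigma_s\sigma_t}$, so the uniqueness clause of Theorem~\ref{thm:Banach-Lamperti} forces $\sigma_{st}=\sigma_s\sigma_t$ in $\aut(\Omega,[\mu])$ and $c_{st}=c_s\cdot(c_t\circ\sigma_s^{-1})$ $\mu$-a.e. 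Hence $\sigma$ is a measure class preserving action, $c\in Z^1(\sigma;\T)$, and $\pi=\pi_{p,\sigma,c}$, since formula~(\ref{eqn:def:pi_p,sigma,c}) is precisely the expansion of $m_{c_t}u_{\sigma_t}$. I would emphasise that the hypothesis $p\neq2$ is used only through the surjectivity assertion of Banach--Lamperti and plays no role in the forward direction, which is valid for all $1\leq p<\infty$.
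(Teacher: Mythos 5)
Your argument is the natural one and matches what the paper intends: the paper states Corollary \ref{cor:isom_reps_on_Lp} without proof as an immediate consequence of the Banach--Lamperti theorem, and your two-step treatment (covariance plus the cocycle relation for the homomorphism property; uniqueness in Theorem \ref{thm:Banach-Lamperti} for the converse) is exactly the standard route. The algebraic computations are correct, including the observation that $c_e\equiv 1$ and that the $p\neq 2$ hypothesis enters only through the surjectivity and uniqueness clauses of Banach--Lamperti.

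The one loose end is the strong continuity of $t\mapsto\pi_{p,\sigma,c}(t)\xi$, which you flag but do not close. Note that the paper's definitions of a measure class preserving action and of a $1$-cocycle impose no continuity in the group variable, so as literally stated the forward direction cannot produce a strongly continuous representation without an additional hypothesis (e.g.\ continuity of $t\mapsto u_{\sigma_t}$ in the strong operator topology and of $t\mapsto c_t$ in measure); this is an imprecision inherited from the paper rather than an error of yours. In the ``moreover'' direction no such issue arises: there the strong continuity of $\pi$ is given, and it is what confers on the extracted $\sigma$ and $c$ whatever regularity they have, while the paper's definitions only require each $c_t$ to be measurable and $\sigma$ to be a homomorphism, both of which you verify.
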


\paragraph{Symmetrized pseudofunction algebras}
We recall in the following the construction of a symmetrized pseudofunction algebra. We refer to \cite{DrutuNowak2015KazhdanProj}, \cite{ElkiaerPooya2023} and \cite{GardellaThiel2014GpAlgActingOnLp} for a more thorough treatment of general pseudofunction algebras. 

Let $G$ be a locally compact group. Given an isometric representation $\pi$ of $G$ on a Banach space $E$, its integrated form is the contractive, non-degenerate Banach algebra representation of $L^1(G)$ on $E$ given, for $f\in L^1(G)$, by
\[
\pi(f)=\integral{f(s)\pi(s)}{G}{\mu_G(s)},
\]
where $\mu_G$ denotes the Haar-measure on $G$. It is folklore that integration gives a $1$-to-$1$ correspondence between the isometric representations of $G$ and the contractive, non-degenerate representations of $L^1(G)$. We denote by $E'$ the Banach space dual of $E$. The isometric representation $(\pi,E)$ gives, in a natural way, rise to an isometric representation on $E'$ as follows: For $t\in G$, $\xi\in E$ and $\eta\in E'$, set
\[
[\pi'(t)\eta](\xi)=\eta(\pi(t^{-1})\xi).
\]
We refer to $(\pi',E')$ as the \emph{dual representation} of $(\pi,E)$. We assume in the following that $E$ is reflexive so that $\pi''$ can be identified with $\pi$. We associate to $\pi$ a seminorm on $L^1(G)$ as follows: For $f\in L^1(G)$, set
\[
\norm{f}{F^*_{\pi}}=\max\{\norm{\pi(f)}{},\norm{\pi'(f)}{}\}.
\]
This defines a norm on the quotient of $L^1(G)$ with $\ker\pi\cap\ker\pi'$. The completion with respect to this norm is denoted by $F^*_{\pi}(G)$ and referred to as the \emph{symmetrized $\pi$-pseudofunction algebra} of $G$. It is a Banach $^*$-algebra with the involution coming from $L^1(G)$ (see Proposition 4.2 in \cite{Samei2018QuasiHermitian} where this is proven in a special case; the proof in the general case is analogous with the obvious adjustments). In this paper, we are, in particular, interested in the case where $\pi$ is the left-regular representation $\lambda_p$ of $G$ on $L^p(G)$, for $1<p<\infty$. The symmetrized pseudofunction algebra $F^*_{\lambda_p}(G)$ associated with $\lambda_p$ is referred to as the \emph{symmetrized $p$-pseudofuntion algebra} of $G$.

Let $\mathcal{E}$ be a class of reflexive Banach space and let $\mathcal{E}'$ be the class of Banach spaces which are dual to the spaces in $\mathcal{E}$. We denote by $\Rep_{\mathcal{E}}(G)$ the class of isometric representations of $G$ on spaces in $\mathcal{E}$. We associate to $\mathcal{E}$ a seminorm on $L^1(G)$ as follows: For $f\in L^1(G)$, set
\[
\norm{f}{F^*_{\mathcal{E}}}=\sup\set{\norm{\pi(f)}{}}{\pi\in\Rep_{\mathcal{E}}(G)\mbox{ or }\pi\in\Rep_{\mathcal{E}'}(G)}.
\]
Set $I_{\mathcal{E}}=\bigcap_{\pi\in\Rep_{\mathcal{E}}(G)}\ker\pi$. The seminorm above defines a norm on the quotient of $L^1(G)$ with $I_{\mathcal{E}}\cap I_{\mathcal{E}'}$. We denote by $F^*_{\mathcal{E}}(G)$ the completion with respect to this norm and refer to it as the \emph{symmetrized $\mathcal{E}$-pseudofuntion algebra}. Just like the symmetrized $\pi$-pseudofunction algebra, it is a Banach $^*$-algebra with involution coming from $L^1(G)$. Further, $F^*_{\mathcal{E}}(G)$ is \emph{$\mathcal{E}$-universal} in the sense that, for any $\pi\in\Rep_{\mathcal{E}}(G)$, the identity map on $L^1(G)$ extends to a contraction $F^*_{\mathcal{E}}(G)\rightarrow F^*_{\pi}(G)$.

\paragraph{Spaces of matrix coefficients}
Let $(\pi,E)$ be an isometric representation of the locally compact group $G$ on a Banach space $E$. The \emph{$\pi$-Fourier-Stieltjes space} is the linear subspace of $L^{\infty}(G)$ given by
\[
B_{\pi}(G)=\set{\varphi:G\rightarrow\C\; \mbox{measurable}}{\exists C>0:\abs{\varphi(f)}\leq C\norm{\pi(f)}{}, \forall f\in L^1(G)},
\]
where
\[
\varphi(f)=\integral{f(s)\varphi(s)}{G}{\mu_G(s)}.
\]
We equip $B_{\pi}(G)$ with the norm
\[
\norm{\varphi}{B_{\pi}}=\inf\set{C>0}{\abs{\varphi(f)}\leq C\norm{\pi(f)}{}, \forall f\in L^1(G)}
\]
With this norm, the canonical embedding of $B_{\pi}(G)$ into $L^{\infty}(G)$ is a contraction.

A \emph{matrix coefficient} of the representation $(\pi,E)$ is a function on $G$ of the form
\[
\varphi_{\xi,\eta}(t)=\inner{\pi(t)\xi}{\eta},
\]
for $t\in G$, where $\xi\in E$, $\eta\in E'$ and $\innerdot$ is the duality pairing between $E$ and $E'$. Clearly, $\varphi_{\xi,\eta}$ is an element of $B_{\pi}(G)$ with $\norm{\varphi_{\xi,\eta}}{B_{\pi}}\leq\norm{\xi}{}\norm{\eta}{}$. In general, not all elements of $B_{\pi}(G)$ need to be matrix coefficients of $\pi$, but the $\pi$-Fourier-Stieltjes space can still be be understood very concretely as a space of matrix coefficients. This is made precise in Theorem \ref{thm:Bpi_as_space_of_matrix_coeff} below, which is Theorem 2 in \cite{CowlingFendler1984RepInBanachSp}.
\begin{thm}\label{thm:Bpi_as_space_of_matrix_coeff}
Let $\pi$ be an isometric representation of $G$ on a Banach space $E$. There exists an isometric representation $\pi_0$ on a Banach space $E_0$ such that $B_{\pi}(G)$ and $B_{\pi_0}(G)$ are canonically isometrically isomorphic and such that, for every $\varphi\in B_{\pi}(G)$, one can find $\xi\in E_0$ and $\eta\in E_0'$ such that $\varphi=\inner{\pi_0(\square)\xi}{\eta}$ and $\norm{\varphi}{B_\pi}=\norm{\xi}{}\norm{\eta}{}$. Moreover, if $E$ is an $L^p$-space, or a $QSL^p$-space, then so is $E_0$.
\end{thm}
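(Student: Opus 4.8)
I would manufacture $(\pi_0,E_0)$ from $(\pi,E)$ by two operations, neither of which changes the function $f\mapsto\norm{\pi(f)}{}$ on $L^1(G)$: forming $\ell^p$-type direct sums of copies of $\pi$, and forming ultrapowers. The point of this is twofold. First, whenever $\norm{\sigma(f)}{}=\norm{\pi(f)}{}$ for all $f\in L^1(G)$, the spaces $B_\pi(G)$ and $B_\sigma(G)$ are by definition \emph{the same} normed subspace of $L^\infty(G)$, so the ``canonically isometrically isomorphic'' clause is free; all the content is in the matrix-coefficient assertion. Second, the direct sums and ultrapowers are precisely what is needed to make the matrix coefficients of $\pi_0$ abundant enough. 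For the realization, recall that $B_\pi(G)$ is isometrically the dual of the completion $F_\pi(G)$ of $L^1(G)$ in the norm $f\mapsto\norm{\pi(f)}{}$, and that the integrated form identifies $F_\pi(G)$ isometrically with the closed subalgebra $A_\pi:=\overline{\pi(L^1(G))}\subseteq B(E)$. Fix $\varphi\in B_\pi(G)$ and extend it by Hahn--Banach to $\widetilde\varphi\in B(E)'$ with $\norm{\widetilde\varphi}{}=\norm{\varphi}{B_\pi}$. Since $\sup\set{\abs{\inner{T\xi}{\eta}}}{\norm{\xi}{}\le1,\ \norm{\eta}{}\le1}=\norm{T}{}$ for $T\in B(E)$, the polar in $B(E)$ of the convex balanced set $\mathcal N_1$ of ``nuclear'' functionals $T\mapsto\sum_{i=1}^n\inner{T\xi_i}{\eta_i}$ with $\sum_i\norm{\xi_i}{}\norm{\eta_i}{}\le1$ is exactly the unit ball of $B(E)$; so by the bipolar theorem $\mathcal N_1$ is weak$^*$-dense in the unit ball of $B(E)'$, and by homogeneity I may approximate $\widetilde\varphi$ on any prescribed finite subset of $L^1(G)$, to within any $\varepsilon>0$, by a nuclear functional with $\sum_i\norm{\xi_i}{}\norm{\eta_i}{}\le\norm{\varphi}{B_\pi}$.

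\textbf{From sums to a single coefficient, then a limit.} Such a finite sum, read on $\pi(L^1(G))$, is the matrix coefficient of the direct sum $\pi^{(n)}=\pi\oplus\dots\oplus\pi$ on $E^{(n)}$ equipped with the $\ell^p$-norm (so the dual carries the $\ell^{p'}$-norm), implemented by $\xi=(\xi_i)_i$ and $\eta=(\eta_i)_i$; rescaling $\xi_i\mapsto t_i\xi_i$, $\eta_i\mapsto t_i^{-1}\eta_i$ and optimizing over $t_i>0$ makes $\norm{\xi}{}\norm{\eta}{}$ equal to $\sum_i\norm{\xi_i}{}\norm{\eta_i}{}\le\norm{\varphi}{B_\pi}$. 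Viewing all of these (over varying $n$) inside the $\ell^p$-sum $E^{(\N)}$ of countably many copies of $E$, with representation $\rho=\bigoplus_\N\pi$, I would index the approximants by the directed set of pairs (finite subset of $L^1(G)$, tolerance), fix an ultrafilter $\mathcal U$ refining the order filter, and form the ultrapower $(E^{(\N)})_{\mathcal U}$ with $\rho_{\mathcal U}$; since $\rho_{\mathcal U}(f)=(\rho(f))_{\mathcal U}$ one has $\norm{\rho_{\mathcal U}(f)}{}=\norm{\pi(f)}{}$. The classes $\xi_\varphi,\eta_\varphi$ of the two nets then satisfy $\inner{\rho_{\mathcal U}(f)\xi_\varphi}{\eta_\varphi}=\varphi(f)$ for every $f\in L^1(G)$ and $\norm{\xi_\varphi}{}\norm{\eta_\varphi}{}\le\norm{\varphi}{B_\pi}$. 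After the repair in the next paragraph this produces a genuine isometric representation $\pi_0$ together with $\xi\in E_0$, $\eta\in E_0'$ satisfying $\inner{\pi_0(f)\xi}{\eta}=\varphi(f)$ for all $f\in L^1(G)$ and $\norm{\xi}{}\norm{\eta}{}\le\norm{\varphi}{B_\pi}$; because $B_\pi(G)$ embeds injectively into $L^1(G)'$ and the integrated form of a strongly continuous representation recovers the coefficient function, this forces $\varphi=\inner{\pi_0(\square)\xi}{\eta}$, and the automatic reverse inequality $\norm{\varphi}{B_\pi}\le\norm{\xi}{}\norm{\eta}{}$ then yields equality. Doing this for every $\varphi$ in the unit ball of $B_\pi(G)$ and taking the $\ell^p$-direct sum of all the resulting representations, together with one copy of $\pi$ so that $\norm{\pi_0(f)}{}=\norm{\pi(f)}{}$ is preserved, gives $\pi_0$.

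\textbf{The main obstacle and the last clause.} The step I expect to be genuinely delicate is that an ultrapower of a strongly continuous isometric representation need not be strongly continuous, so $\rho_{\mathcal U}$ is not literally an isometric representation of $G$ in the sense of this paper. I would repair this by passing to the closed invariant \emph{essential} subspace $E_0=\overline{\vspan}\,\rho_{\mathcal U}(L^1(G))(E^{(\N)})_{\mathcal U}$, on which the $G$-action is automatically strongly continuous and non-degenerate; the snag is that $\xi_\varphi$ need not lie in $E_0$. One gets around this by noting that $\rho_{\mathcal U}(f)\zeta\in E_0$ for every $f$ and $\zeta$ (since $t\mapsto\delta_t\ast f$ is norm-continuous from $G$ into $L^1(G)$), mollifying $\xi_\varphi$ against an approximate identity, and then iterating the ultrapower construction: the real purpose of the enlargement is to make $\set{\inner{\pi_0(\square)\xi}{\eta}}{\norm{\xi}{}\norm{\eta}{}\le1}$ weak$^*$-closed in $L^\infty(G)$, so that a net of approximating matrix coefficients has a genuine matrix coefficient as limit. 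Finally, the closing sentence follows from stability properties of the relevant classes: $\ell^p$-direct sums and ultraproducts of $L^p$-spaces are again $L^p$-spaces, and, using the Banach--Lamperti picture (Theorem~\ref{thm:Banach-Lamperti} and Corollary~\ref{cor:isom_reps_on_Lp}), the essential subspace above is the $L^p$-space of an invariant measurable subset of the ultraproduct measure space; the class of $QSL^p$-spaces, being moreover stable under subspaces and quotients, is preserved as well.
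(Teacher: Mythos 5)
The paper does not prove this statement at all: it is imported verbatim as Theorem 2 of Cowling--Fendler \cite{CowlingFendler1984RepInBanachSp}, and your plan is essentially a reconstruction of their argument (bipolar/weak$^*$-density of the nuclear functionals on $\overline{\pi(L^1(G))}$, realization of a finite nuclear sum as a single coefficient of an $\ell^p$-direct sum via the H\"older rescaling, and an ultraproduct to pass to the limit while attaining the norm). The first two stages of your outline are correct and complete in all essentials.

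Two points in your last paragraph do not yet close. First, the strong-continuity repair: mollifying $\xi_\varphi$ against an approximate identity only gives a \emph{net} of genuine coefficients of $\rho_{\mathcal U}\restrict{E_0}$ converging weak$^*$ to $\varphi$, and ``iterating the ultrapower'' to absorb that limit reintroduces the very continuity problem you are trying to fix, so as written the induction has no reason to terminate. The standard way out is to stay inside a single ultraproduct and use Cohen's factorization theorem (or, for $1<p<\infty$, a WOT-cluster point of $\rho_{\mathcal U}(e_j)$, which is a contractive projection $P$ onto the essential part): one can then replace $\xi_\varphi$ by $P\xi_\varphi\in E_0$ and $\eta_\varphi$ by $P^*\eta_\varphi$ without increasing norms or changing any of the pairings $\innerb{\rho_{\mathcal U}(f)\xi_\varphi}{\eta_\varphi}$, since $\rho_{\mathcal U}(f)P=\rho_{\mathcal U}(f)$ on the relevant vectors. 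Second, the claim that the essential subspace of an ultraproduct of $L^p$-spaces is ``the $L^p$-space of an invariant measurable subset'' is not justified and is not how Banach--Lamperti can be used here: a closed invariant subspace of an $L^p$-space need not have that form. What saves the last clause is precisely the contractive projection above together with Ando's theorem that a contractively complemented subspace of an $L^p$-space is again (isometrically) an $L^p$-space; the $QSL^p$ case then follows since that class is closed under $\ell^p$-sums, ultraproducts, subspaces and quotients. With these two substitutions your outline becomes a complete proof along the lines of the cited source.
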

%The space $E_0$ is $\ell^p(\N;E)^{\mathcal{U}}$

A $p$-analogue of the Fourier-Stieltjes algebra was proposed by Runde in \cite{Runde2004repQSLP}. Denote by $QSL^p$ the class of all Banach spaces isometrically isomorphic to a quotient of a subspace of an $L^p$-space. Further, we denote by $\Rep_p(G)$ the class of non-degenerate isometric representations of $G$ on a space in $QSL^p$. The \emph{$p$-Fourier-Stieltjes algebra} is the set of matrix coefficients of representations in $\Rep_p(G)$:
\[
B_p(G)=\set{\inner{\pi(\square)\xi}{\eta}}{(\pi,E)\in\Rep_p(G), \xi\in E, \eta\in E'}.
\]
We equip $B_p(G)$ with the following norm: For $\varphi\in B_p(G)$, set
\[
\norm{\varphi}{B_p}=\inf\set{\norm{\xi}{}\norm{\eta}{}}{\varphi=\inner{\pi(\square)\xi}{\eta},\mbox{ for }(\pi,E)\in\Rep_p(G), \xi\in E, \eta\in E'}.
\]
It is shown in \cite{Runde2004repQSLP} that $B_p(G)$ is a commutative Banach algebra over $\C$ with pointwise operations. Clearly, it embeds canonically contractively into $L^{\infty}(G)$.
\begin{warning}
In \cite{Runde2004repQSLP}, Runde defines $B_p(G)$ as the set of matrix coefficients of representations in $\Rep_{p'}(G)$, where $p'$ is the Hölder conjugate of $p$. We follow the convention used in \cite{Daws2006pOperatorSpaces} and do not exchange $p$ and $p'$. Hence, $B_p(G)$ in our notation is $B_{p'}(G)$ in the notation of Runde.
\end{warning}

Let $(\pi,E)$ and $(\rho,F)$ be two isometric Banach space representations of $G$. We say that $\rho$ is \emph{contained} in $\pi$ and write $\rho\leq\pi$ if there exists a linear isometry $T:F\rightarrow E$ such that $\pi(t)T\xi=T\rho(t)\xi$, for all $t\in G$ and $\xi\in F$. If $\pi$ belongs to a class $\mathcal{R}$ of isometric representations of $G$, we say that $\pi$ is \emph{$\mathcal{R}$-universal} if it contains all representations in $\mathcal{R}$. We are, in particular, interested in the class of isometric representations on $QSL^p$-spaces, and we shall say that $\pi$ is \emph{$p$-universal} when it is universal with respect to this class.

\begin{thm}\label{thm:Bpi_and_Bp}
Let $(\pi,E)$ be an isometric $QSL^p$-representation of the locally compact group $G$. Then $B_{\pi}(G)$ embeds canonically and contractively into $B_p(G)$. If $(\pi,E)$ is $p$-universal, this embedding is an isometric isomorphism.
\end{thm}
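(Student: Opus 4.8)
The plan is to treat the two assertions in turn. The contractive embedding $B_\pi(G)\hookrightarrow B_p(G)$ will follow by realising an arbitrary $\varphi\in B_\pi(G)$ as a matrix coefficient of a $QSL^p$-representation via Theorem \ref{thm:Bpi_as_space_of_matrix_coeff}; the reverse inequality in the $p$-universal case will follow by transporting a matrix coefficient of a representation in $\Rep_p(G)$ along a containment isometry, using a Hahn--Banach lift on the dual side so that no norm is lost.

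For the first part, let $\varphi\in B_\pi(G)$; we may assume $\varphi\neq0$. Applying Theorem \ref{thm:Bpi_as_space_of_matrix_coeff} produces an isometric representation $(\pi_0,E_0)$ with $B_{\pi_0}(G)=B_\pi(G)$ isometrically, together with $\xi\in E_0$ and $\eta\in E_0'$ such that $\varphi=\inner{\pi_0(\square)\xi}{\eta}$ and $\norm{\varphi}{B_\pi}=\norm{\xi}{}\norm{\eta}{}$; since $E$ is a $QSL^p$-space, so is $E_0$. Because the integrated form of any isometric representation of $G$ is non-degenerate, $\pi_0$ belongs to $\Rep_p(G)$, so $\varphi$ is a matrix coefficient of a representation in $\Rep_p(G)$. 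Hence $\varphi\in B_p(G)$ and $\norm{\varphi}{B_p}\leq\norm{\xi}{}\norm{\eta}{}=\norm{\varphi}{B_\pi}$. As both $B_\pi(G)$ and $B_p(G)$ carry norms dominating $\normdot{\infty}$, and therefore embed injectively into $L^\infty(G)$, this shows that the canonical inclusion $B_\pi(G)\hookrightarrow B_p(G)$ is a well-defined contraction.

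Now assume $(\pi,E)$ is $p$-universal; by the first part it suffices to prove $\norm{\varphi}{B_\pi}\leq\norm{\varphi}{B_p}$ for every $\varphi\in B_p(G)$. Fix $\varepsilon>0$ and write $\varphi=\inner{\rho(\square)\xi}{\eta}$ with $(\rho,F)\in\Rep_p(G)$, $\xi\in F$, $\eta\in F'$ and $\norm{\xi}{}\norm{\eta}{}\leq\norm{\varphi}{B_p}+\varepsilon$. By $p$-universality $\rho\leq\pi$, so fix a linear isometry $T:F\rightarrow E$ with $\pi(t)T=T\rho(t)$ for all $t\in G$. Since $T$ is an isometric embedding, a Hahn--Banach argument gives $\tilde\eta\in E'$ with $T'\tilde\eta=\eta$ and $\norm{\tilde\eta}{}=\norm{\eta}{}$. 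Then, for every $t\in G$,
\[
\varphi(t)=\inner{\rho(t)\xi}{\eta}=\inner{\rho(t)\xi}{T'\tilde\eta}=\inner{T\rho(t)\xi}{\tilde\eta}=\inner{\pi(t)(T\xi)}{\tilde\eta},
\]
so $\varphi$ is a matrix coefficient of $\pi$ and therefore lies in $B_\pi(G)$ with $\norm{\varphi}{B_\pi}\leq\norm{T\xi}{}\norm{\tilde\eta}{}=\norm{\xi}{}\norm{\eta}{}\leq\norm{\varphi}{B_p}+\varepsilon$. Letting $\varepsilon\to0$ gives $\norm{\varphi}{B_\pi}\leq\norm{\varphi}{B_p}$, and together with the first part we conclude that $B_\pi(G)=B_p(G)$ with identical norms, i.e. the canonical map is an isometric isomorphism.

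The argument is mostly bookkeeping once Theorem \ref{thm:Bpi_as_space_of_matrix_coeff} and the definition of $p$-universality are available; the two points needing care are that the auxiliary representation $\pi_0$ is a bona fide element of $\Rep_p(G)$ — which rests on the automatic non-degeneracy of (strongly continuous) isometric representations — and the passage of matrix coefficients through the containment isometry $T$, where one must use that the adjoint $T'$ lifts functionals isometrically. I do not anticipate a genuine obstacle beyond these.
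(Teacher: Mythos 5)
Your proof is correct. The first half is exactly the paper's argument: invoke Theorem \ref{thm:Bpi_as_space_of_matrix_coeff} to realize $\varphi\in B_\pi(G)$ as a matrix coefficient of an auxiliary $QSL^p$-representation with $\norm{\varphi}{B_\pi}=\norm{\xi}{}\norm{\eta}{}$, and conclude $\norm{\varphi}{B_p}\leq\norm{\varphi}{B_\pi}$. In the $p$-universal direction you diverge slightly from the paper in how the containment isometry $T:F\rightarrow E$ is exploited. The paper never touches the functional $\eta$: it deduces the operator-norm inequality $\norm{\rho(f)}{}\leq\norm{\pi(f)}{}$ for the integrated forms from $\pi(f)T=T\rho(f)$, and then appeals to the \emph{functional-analytic} definition of $B_\pi(G)$ (the space of $\varphi$ with $\abs{\varphi(f)}\leq C\norm{\pi(f)}{}$) to place $\varphi$ in $B_\pi(G)$ with constant $\norm{\xi}{}\norm{\eta}{}$. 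You instead produce a Hahn--Banach extension $\tilde\eta\in E'$ of $\eta\circ T^{-1}$ from $T(F)$ to $E$ with preserved norm, thereby exhibiting $\varphi$ as an honest matrix coefficient $\inner{\pi(\square)T\xi}{\tilde\eta}$ of $\pi$ itself. Both give the same bound; yours yields the marginally stronger conclusion that every element of $B_p(G)$ is a genuine coefficient function of the $p$-universal $\pi$ (not just dominated by it), at the cost of an extra appeal to Hahn--Banach, while the paper's version is the more economical one given that $B_\pi(G)$ is defined by the domination condition rather than as a set of coefficients. Your two flagged points of care (non-degeneracy of $\pi_0$ so that it lies in $\Rep_p(G)$, and the isometric lifting along $T'$) are both handled correctly.
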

\begin{proof}
It is a direct consequence of Theorem \ref{thm:Bpi_as_space_of_matrix_coeff} that $B_{\pi}(G)$ embeds contractlively into $B_p(G)$. Indeed, let $\varphi\in B_{\pi}(G)$. By Theorem \ref{thm:Bpi_as_space_of_matrix_coeff} we may find a $QSL^p$-representation $(\pi_0,E_0)$ and elements $\xi\in E$ and $\eta\in E'$ such that $\varphi=\inner{\pi(\square)\xi}{\eta}$ and $\norm{\varphi}{B_{\pi}}=\norm{\xi}{}\norm{\eta}{}$. Then $\varphi$ lies in $B_p(G)$ with $\norm{\varphi}{B_p}\leq\norm{\varphi}{B_{\pi}}$. Suppose now that $(\pi,E)$ is $p$-universal and let $\psi\in B_p(G)$. By definition of $B_p(G)$ we may find a $QSL^p$-representation $(\pi_1,E_1)$ such that $\psi=\inner{\pi_1(\square)\xi}{\eta}$, for some $\xi\in E_1$ and $\eta\in E_1'$. Because $\pi$ is $p$-universal, there is a linear isometry $T:E_1\rightarrow E$ such that $\pi(t)T\zeta=T\pi_1(t)\zeta$, for all $\zeta\in E_1$ and $t\in G$. Then, for any $f\in L^1(G)$ and $\zeta\in E_1$,
\[
\norm{\pi_1(f)\zeta}{}=\norm{T\pi_1(f)\zeta}{}=\norm{\pi(f)T\zeta}{}\leq\norm{\pi(f)}{}\norm{\zeta}{}.
\]
It follows that $\norm{\pi_1(f)}{}\leq \norm{\pi(f)}{}$, and so, $\abs{\psi(f)}=\abs{\inner{\pi_1(f)\xi}{\eta}}\leq \norm{\pi(f)}{}\norm{\xi}{}\norm{\eta}{}$, for every $f\in L^1(G)$. Hence, $\psi$ lies in $B_{\pi}(G)$ with $\norm{\psi}{B_{\pi}}\leq\norm{\xi}{}\norm{\eta}{}$. Since $\psi=\inner{\pi_1(\square)\xi}{\eta}$ was an arbitrary representation of $\psi$ we can take the infimum on the right hand side of this inequality to obtain $\norm{\psi}{B_{\pi}}\leq\norm{\psi}{B_p}$. Hence, when $\pi$ is $p$-universal, the canonical embedding $B_{\pi}(G)\hookrightarrow B_p(G)$ is an isometric isomorphism.
\end{proof}

\paragraph{Complex interpolation}
We give a brief overview of the complex interpolation method focusing on interpolation bounds on families of operators. We refer the reader to \cite{BerghLoefstroem2011Interpolation} for a thorough introduction to the topic. 
A pair $(E_0,E_1)$ of complex Banach spaces is said to be \emph{compatible} if there exists a Hausdorff topological vector space $V$ and $\C$-linear continuous embeddings $\iota_j:E_j\hookrightarrow V$, for $i\in\{0,1\}$. Given a compatible pair $(E_0,E_1)$, their \emph{intersection space} is the vector subspace of $V$ given by $E_0\cap E_1=\iota_0(E_0)\cap\iota_1(E_1)$. It becomes a Banach space when equipped with the norm defined for $\xi=\iota_0(\xi_0)=\iota_1(\xi_1)$ by
\[
\norm{\xi}{E_0\cap E_1}=\max\{\norm{\xi_0}{E_0},\norm{\xi_1}{E_1}\},
\]
Further, the \emph{sum space} of the pair $(E_0,E_1)$ is the vector subspace of $V$ given by $E_0+E_1=\iota_0(E_0)+\iota_1(E_1)$. We equip this space with the Banach space norm
\[
\norm{\xi}{E_0+E_1}=\inf\set{\norm{\xi_0}{E_0}+\norm{\xi_1}{E_1}}{\xi=\iota_0(\xi_0)+\iota_1(\xi_1)}.
\]
The complex interpolation method associates to each parameter $\theta\in[0,1]$ a Banach space $[E_0,E_1]_{\theta}$ such that there are canonical continuous inclusions $E_0\cap E_1\subset [E_0,E_1]_{\theta}\subset E_0+E_1,
$ and such that $E_0\cap E_1$ is dense in $[E_0,E_1]_{\theta}$. For a measure space $(\Omega,\mu)$ and parameters $1<p_0<p_1<\infty$, the interpolation space of $L^{p_0}(\Omega,\mu)$ and $L^{p_1}(\Omega,\mu)$ with parameter $\theta\in[0,1]$ can be identified with $L^{p_{\theta}}(\Omega,\mu)$ with
\[
\frac{1}{p_{\theta}}=\frac{1-\theta}{p_0}+\frac{\theta}{p_1}.
\]
Viewed as a function on $\theta\in[0,1]$, $p_{\theta}$ is continuous and monotonically increasing taking the value $p_0$ at $\theta=0$ and $p_1$ at $\theta=1$. Denote by $S(\Omega)$ and $L^0(\Omega,\mu)$ the simple, respectively, measurable complex valued functions on $\Omega$. Given a linear operator $S(\Omega)\rightarrow L^0(\Omega,\mu)$ which extends to a bounded operator on both $L^{p_0}(\Omega,\mu)$ and $L^{p_1}(\Omega,\mu)$, the Riesz-Thorin Theorem guaranties that it also extends to a bounded operator on the interpolation spaces. The Riesz-Thorin Theorem was generalized by Stein in \cite{Stein1956Interpolation} to families of linear operators. We recall Stein's Interpolation Theorem in Theorem \ref{thm:Stein} below. Denote by
\[
\Strip=\set{z\in\C}{0\leq\real{z}\leq1}=\set{\theta+i\gamma}{0\leq\theta\leq1,\gamma\in\R}
\]
the vertical \emph{strip} in the complex plane. A function $\Phi:\Strip\rightarrow\C$ which is continuous on $\Strip$ and analytic on the interior $\Strip^{\circ}$ is said to have \emph{admissible growth} if there exists a konstant $k<\pi$ such that
\[
\sup_{z\in\Strip}e^{-k\abs{\imag z}}\log\abs{\Phi(z)}<\infty
\]
A family $(T_z)_{z\in\Strip}$ of linear operators $S(\Omega)\rightarrow L^0(\Omega,\mu)$ indexed by the strip is said to be \emph{admissible} if, for every pair of simple functions $f,g\in S(\Omega)$, the map $\Strip\rightarrow\C$ given by
\[
z\mapsto\integral{(T_zf)g}{\Omega}{\mu}
\]
is continuous on $\Strip$, analytic on $\Strip^{\circ}$, and has admissible growth. Given a family $(T_z)_{z\in\Strip}$ of admissible operators such that the operators on the left boundary of the strip extend to bounded operators on $L^{p_0}(\Omega,\mu)$ and the operators on the right boundary of the strip extend to bounded operators on $L^{p_1}(\Omega,\mu)$, Stein's Interpolation Theorem ensures that, for each interpolation parameter $\theta\in[0,1]$, the operator $T_{\theta}$ extends to a bounded operator on the interpolation space $L^{p_{\theta}}(\Omega,\mu)$. 
\begin{thm}[Stein's Interpolation Theorem]\label{thm:Stein}
Let $1\leq p_0<p_1\leq\infty$ and let $(\Omega,\mu)$ be a measure space. Suppose $\net{T}{z}{S}$ is an admissible family of linear operators $S(\Omega)\rightarrow L^0(\Omega,\mu)$ satisfying
\begin{align*}
\norm{T_{i\gamma}\xi}{p_0}\leq M_0(\gamma)\norm{\xi}{p_0}\qquad\mbox{and}\qquad \norm{T_{i\gamma+1}\xi}{p_1}\leq M_1(\gamma)\norm{\xi}{p_1},
\end{align*}
for all simple functions $\xi\in S(\Omega)$ and all $\gamma\in\R$, where $M_j(\gamma)>0$, for $j\in\{0,1\}$, are independent of $\xi$ and satisfy
\begin{align*}
M_j\overdot{=}\sup_{\gamma\in\R}\log M_j(\gamma)<\infty.
\end{align*}
Then, for each $0<\theta<1$, the constant $M_{\theta}>0$ defined by
\begin{align*}
\log M_{\theta}=\frac{\sin\pi\theta}{2}\rintegral{\frac{M_0}{\cosh\pi\gamma-\cos\pi\theta}+\frac{M_1}{\cosh\pi\gamma+\cos\pi\theta}}{-\infty}{\infty}{\gamma}
\end{align*}
is finite, and
\begin{align*}
\norm{T_{\theta}\xi}{p_{\theta}}\leq M_{\theta}\norm{\xi}{p_{\theta}},
\end{align*}
for every simple function $\xi\in S(\Omega)$.
\end{thm}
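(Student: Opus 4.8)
The plan is to run the classical complex-analytic argument of Thorin and Stein: encode the sought operator estimate as a pointwise bound on a scalar analytic function on the strip $\Strip$, control that function on the two boundary lines by $M_0(\gamma)$ and $M_1(\gamma)$, and conclude with a three-lines theorem in the Phragmén--Lindelöf form that tolerates admissible growth on a strip of width one. Fix $0<\theta<1$, so that $1<p_\theta<\infty$, and let $p_\theta'$ be its Hölder conjugate. By the duality description of the $L^{p_\theta}$-norm through simple test functions, it suffices to bound $\absb{\integral{(T_\theta\xi)g}{\Omega}{\mu}}$ by $M_\theta$ for all $\xi,g\in S(\Omega)$ with $\norm{\xi}{p_\theta}=\norm{g}{p_\theta'}=1$; write $\xi=\sum_k a_k\chi_{A_k}$ and $g=\sum_l b_l\chi_{B_l}$ over disjoint sets of finite measure with $a_k,b_l\neq0$. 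Put $\alpha(z)=p_\theta\bigl(\tfrac{1-z}{p_0}+\tfrac{z}{p_1}\bigr)$ and $\beta(z)=p_\theta'\bigl(\tfrac{1-z}{p_0'}+\tfrac{z}{p_1'}\bigr)$ and define the analytic families of simple functions
\[
\xi_z=\sum_k\abs{a_k}^{\alpha(z)}\tfrac{a_k}{\abs{a_k}}\chi_{A_k},\qquad g_z=\sum_l\abs{b_l}^{\beta(z)}\tfrac{b_l}{\abs{b_l}}\chi_{B_l},
\]
with the convention $\abs{a}^{0}:=1$ to absorb the cases $p_0=1$ or $p_1=\infty$. Then $\xi_\theta=\xi$, $g_\theta=g$, and, since $\real\alpha$ and $\real\beta$ are affine in $\real z$, one checks that $\norm{\xi_{i\gamma}}{p_0}\leq1$, $\norm{\xi_{1+i\gamma}}{p_1}\leq1$, $\norm{g_{i\gamma}}{p_0'}\leq1$ and $\norm{g_{1+i\gamma}}{p_1'}\leq1$ for every $\gamma\in\R$.

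Next, set $F(z)=\integral{(T_z\xi_z)g_z}{\Omega}{\mu}$, which expands into the finite sum
\[
F(z)=\sum_{k,l}\abs{a_k}^{\alpha(z)}\abs{b_l}^{\beta(z)}\tfrac{a_k}{\abs{a_k}}\tfrac{b_l}{\abs{b_l}}\integral{(T_z\chi_{A_k})\chi_{B_l}}{\Omega}{\mu}.
\]
Each summand is an entire function of $z$ that is bounded on $\Strip$ (because $\real\alpha$ and $\real\beta$ are bounded there) multiplied by one of the maps the hypothesis declares continuous on $\Strip$, analytic on $\Strip^{\circ}$ and of admissible growth; hence $F$ itself has these three properties. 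On the two boundary lines, Hölder's inequality together with the hypotheses gives $\abs{F(i\gamma)}\leq\norm{T_{i\gamma}\xi_{i\gamma}}{p_0}\norm{g_{i\gamma}}{p_0'}\leq M_0(\gamma)$ and, symmetrically, $\abs{F(1+i\gamma)}\leq M_1(\gamma)$ for all $\gamma\in\R$. Since $M_0(\gamma)\leq e^{M_0}$ and $M_1(\gamma)\leq e^{M_1}$ uniformly in $\gamma$, the three-lines theorem in its Phragmén--Lindelöf form (legitimate since $F$ is of admissible growth) applied to $F$ yields $\abs{F(\theta)}\leq\bigl(e^{M_0}\bigr)^{1-\theta}\bigl(e^{M_1}\bigr)^{\theta}=e^{(1-\theta)M_0+\theta M_1}$.

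It remains to identify $e^{(1-\theta)M_0+\theta M_1}$ with the constant $M_\theta$ of the statement. Writing the three-lines bound via the Poisson kernels of $\Strip$ — whose integrals against constants are the harmonic masses $1-\theta$ and $\theta$ of the two edges at the point $\theta$ — the elementary identities
\[
\frac{\sin\pi\theta}{2}\rintegral{\frac{1}{\cosh\pi\gamma-\cos\pi\theta}}{-\infty}{\infty}{\gamma}=1-\theta,\qquad \frac{\sin\pi\theta}{2}\rintegral{\frac{1}{\cosh\pi\gamma+\cos\pi\theta}}{-\infty}{\infty}{\gamma}=\theta
\]
show that $\log M_\theta=(1-\theta)M_0+\theta M_1$; in particular $M_\theta$ is finite because $M_0,M_1<\infty$. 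Since $F(\theta)=\integral{(T_\theta\xi)g}{\Omega}{\mu}$, this establishes both assertions.

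I expect the delicate points to be, first, the verification that $F$ is analytic up to the boundary and of admissible growth: this is exactly where the simplicity of $\xi$ and $g$ enters, reducing matters to the finitely many maps $z\mapsto\integral{(T_z\chi_{A_k})\chi_{B_l}}{\Omega}{\mu}$ that the hypothesis assumes admissible; and, second, the three-lines step, where one removes the (doubly-exponential) growth by a Phragmén--Lindelöf argument using a multiplier of the type $\exp\bigl(-\delta\cos(\mu(z-\tfrac12))\bigr)$ with $k<\mu<\pi$ and then letting $\delta\downarrow0$ — the constraint $k<\pi$ being precisely what a strip of width one permits. The duality reduction in the first step is routine once one notes that simple test functions suffice to compute $L^{p_\theta}$-norms.
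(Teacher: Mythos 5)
The paper states this result without proof, citing Stein's original article, so there is no in-paper argument to compare against; your reconstruction is the classical Thorin--Stein proof and it is correct. You reduce by duality to bounding $\absb{\mediumint(T_\theta\xi)g\,\mathrm{d}\mu}$, deform $\xi$ and $g$ analytically so that the boundary norms equal $1$, observe that $F(z)=\mediumint(T_z\xi_z)g_z\,\mathrm{d}\mu$ inherits admissibility from the finitely many maps $z\mapsto\mediumint(T_z\chi_{A_k})\chi_{B_l}\,\mathrm{d}\mu$, and close with a Phragm\'en--Lindel\"of three-lines bound; the identification of $e^{(1-\theta)M_0+\theta M_1}$ with the paper's $M_\theta$ via the two Poisson-mass integrals $1-\theta$ and $\theta$ is exactly right, since the paper's integrand carries the constants $M_j=\sup_\gamma\log M_j(\gamma)$ rather than $\log M_j(\gamma)$ itself. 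Two points worth making explicit if you write this up: the duality reduction $\norm{h}{p_\theta}=\sup\setb{\absb{\mediumint hg\,\mathrm{d}\mu}}{g\in S(\Omega),\ \norm{g}{p_\theta'}\leq1}$ uses semifiniteness of $\mu$ (harmless here, and satisfied in all of the paper's applications), and in the three-lines step one should first pass to $G(z)=F(z)e^{-(1-z)M_0-zM_1}$, which is bounded by $1$ on both boundary lines and still of admissible growth, before applying the multiplier $e^{-\delta\cos(\mu(z-1/2))}$ and letting $\delta\downarrow0$ --- this is what produces the geometric interpolation of the two boundary bounds rather than their maximum.
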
 

\section{Interpolations of $L^1(G)$ and $C^*(G)$}\label{sec:interpolations}
In this section we prove Theorem \ref{introthm:FstarLp->FstarLq} from the introduction, which is Theorem \ref{thm:FstarLp->FstarLq} below. This establishes canonical contractive $^*$-homomorphisms $F_{L^p}^*(G)\rightarrow F_{L^q}^*(G)$, for $1\leq p<q\leq2$. The proof relies on Stein's Interpolation Theorem and on the Banach-Lamperti Theorem. When $q=2$ the Banach-Lamperti Theorem does not apply. However, this obstacle can be circumvented with an application of the so-called ``Gaussian functor trick''.\\

Let $G\act{\sigma}(\Omega,\mu)$ be a measure class preserving action of the locally compact group $G$ on the $\sigma$-finite measure space $(\Omega,\mu)$. Fix $1\leq p_0<p_1<\infty$. For each $s\in G$ and $\gamma\in\R$, define $a^{\gamma}_s:\Omega\rightarrow\C$ by
\begin{align}\label{eqn:def:agamma}
a_s^{\gamma}=\left(\frac{\mathrm{d}s.\mu}{\mathrm{d}\mu}\right)^{\gamma(1/p_0-1/p_1)i}.
\end{align}
\begin{lem}\label{lem:agamma_is_a_1-cocycle}
The map $a^{\gamma}:G\times\Omega\rightarrow\T$ given by $(s,\omega)\mapsto a^{\gamma}_s(\omega)$ with $a_s^{\gamma}$ as in equation (\ref{eqn:def:agamma}) is a $\T$-valued $1$-cocycle for the action $G\act{\sigma}(\Omega,\mu)$.
\end{lem}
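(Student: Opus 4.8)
The plan is to verify the two defining properties of a $\T$-valued $1$-cocycle for $\net{a^{\gamma}}{}{}$: that $a_s^{\gamma}$ is a measurable $\T$-valued function for every $s\in G$, and that the cocycle identity $a_{st}^{\gamma}=a_s^{\gamma}\cdot(a_t^{\gamma}\circ\sigma_s^{-1})$ holds $\mu$-a.e.\ for all $s,t\in G$.

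First I would record that $a_s^{\gamma}$ does take values in $\T$. Write $h_s\defeq\tfrac{\mathrm{d}s.\mu}{\mathrm{d}\mu}$; by quasi-invariance this is a measurable function which is finite and strictly positive $\mu$-a.e. For a positive real number $r$ and a real number $\alpha$ one has $r^{\alpha i}=e^{i\alpha\log r}\in\T$, where $\log$ is the ordinary real logarithm, so since $\gamma(1/p_0-1/p_1)$ is real there is no branch ambiguity and $a_s^{\gamma}=h_s^{\gamma(1/p_0-1/p_1)i}=e^{i\gamma(1/p_0-1/p_1)\log h_s}$ is a well-defined $\T$-valued function. Measurability of $a_s^{\gamma}$ follows because $h_s$ is measurable and $r\mapsto e^{i\alpha\log r}$ is Borel on $(0,\infty)$.

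For the cocycle relation, the key inputs are the chain rule for Radon--Nikodym derivatives and the transformation rule under push-forward. For $s,t\in G$, using $(st).\mu=s.(t.\mu)$ together with the mutual absolute continuity of $s.\mu$ and $\mu$, one has
\[
\frac{\mathrm{d}(st).\mu}{\mathrm{d}\mu}=\frac{\mathrm{d}s.(t.\mu)}{\mathrm{d}s.\mu}\cdot\frac{\mathrm{d}s.\mu}{\mathrm{d}\mu}\qquad\mu\text{-a.e.}
\]
Now if $\nu\ll\mu$ has density $g=\tfrac{\mathrm{d}\nu}{\mathrm{d}\mu}$ and $\rho\in\aut(\Omega,[\mu])$, then $\tfrac{\mathrm{d}\rho.\nu}{\mathrm{d}\rho.\mu}=g\circ\rho^{-1}$ holds $\rho.\mu$-a.e., hence $\mu$-a.e.; applying this with $\nu=t.\mu$ and $\rho=\sigma_s$ gives
\[
\frac{\mathrm{d}(st).\mu}{\mathrm{d}\mu}=\left(\frac{\mathrm{d}t.\mu}{\mathrm{d}\mu}\circ\sigma_s^{-1}\right)\cdot\frac{\mathrm{d}s.\mu}{\mathrm{d}\mu}\qquad\mu\text{-a.e.}
\]
Raising both sides to the power $\gamma(1/p_0-1/p_1)i$ and using that $x\mapsto x^{z}$ is a pointwise operation, so that it commutes with composition, i.e.\ $(g\circ\sigma_s^{-1})^{z}=(g^{z})\circ\sigma_s^{-1}$, yields $a_{st}^{\gamma}=(a_t^{\gamma}\circ\sigma_s^{-1})\cdot a_s^{\gamma}$ $\mu$-a.e., which is the asserted identity since multiplication in $\T$ is commutative.

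I do not expect a genuine obstacle: the only point requiring care is the bookkeeping of the ``$\mu$-a.e.'' qualifiers, since Radon--Nikodym derivatives and the chain rule are defined only up to $\mu$-null sets, and quasi-invariance is what guarantees that the relevant null sets match up under the $\sigma_s$ so that the a.e.\ identities can be chained together. Conceptually, the statement is just the observation that $s\mapsto\log\tfrac{\mathrm{d}s.\mu}{\mathrm{d}\mu}$ is an additive real-valued cocycle and that $a^{\gamma}$ is its image under the continuous homomorphism $x\mapsto e^{i\gamma(1/p_0-1/p_1)x}$ from $(\R,+)$ to $(\T,\cdot)$.
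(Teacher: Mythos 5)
Your proof is correct and follows essentially the same route as the paper: the chain rule for Radon--Nikodym derivatives applied to $(st).\mu=s.(t.\mu)$, followed by raising to the purely imaginary power. The only difference is that you explicitly justify the identification $\tfrac{\mathrm{d}s.(t.\mu)}{\mathrm{d}s.\mu}=\tfrac{\mathrm{d}t.\mu}{\mathrm{d}\mu}\circ\sigma_s^{-1}$ via the push-forward transformation rule, a step the paper leaves implicit, so your write-up is if anything slightly more complete.
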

\begin{proof}
Recall that the Radon-Nikodym derivative is real-valued and measurable function. Hence, $a^{\gamma}_s\in L^0(\Omega,\mu;\T)$, for all $s\in G$. Further, for each pair $s,t\in G$, the Radon-Nikodym derivative satisfies the following equality:
\[
\frac{\mathrm{d}(st).\mu}{\mathrm{d}\mu}
=\frac{\mathrm{d}s.(t.\mu)}{\mathrm{d}\mu}
=\frac{\mathrm{d}s.(t.\mu)}{\mathrm{d}s.\mu}\frac{\mathrm{d}s.\mu}{\mathrm{d}\mu}.
\]
Taking both sides to the power $\gamma(1/p_0-1/p_1)i$, yields the $1$-cocycle relation:
\[
a^{\gamma}_{st}=(a^{\gamma}_t\circ\sigma_s^{-1})a^{\gamma}_s.\qedhere
\]
\end{proof}
Let $c$ be a $\T$-valued $1$-cocycle for the action $G\act{\sigma}(\Omega,\mu)$. Recall that set of $1$-cocycles is a group with multiplication given entrywise. Hence, $ca^{\gamma}$ is again a $1$-cocycle, for each $\gamma\in\R$, with $a^{\gamma}$ as in Lemma \ref{lem:agamma_is_a_1-cocycle}.

We denote by
\[
\Strip=\set{z\in\C}{0\leq\real{z}\leq1}=\set{\theta+i\gamma}{0\leq\theta\leq1,\gamma\in\R}
\]
the vertical \emph{strip} in the complex plane. To each $z=\theta+i\gamma\in\Strip$ we associate the $1$-cocycle $ca^{\gamma}$ and the Hölder exponent $p_0\leq p_{\theta}\leq p_1$ which is the unique real number such that $1/p_{\theta}=\theta/p_0+(1-\theta)/p_1$. Fix $f\in L^1(G)$. For each $z=\theta+i\gamma\in\Strip$, we define a linear operator $T_{\theta+i\gamma}:S(\Omega)\rightarrow L^0(\Omega,\mu)$ by, for $\xi\in S(\Omega)$ and $\omega\in\Omega$,
\begin{align}\label{eqn:def:Tz}
T_{\theta+i\gamma}\xi(\omega)&=\integral{f(s)(c_sa^{\gamma}_s)(\omega)\left(\frac{\mathrm{d}s.\mu}{\mathrm{d}\mu}\right)^{1/p_{\theta}}(\omega)\xi(s^{-1}.\omega)}{G}{\mu_G(s)}.
\end{align}
That is, $T_{\theta+i\gamma}=\pi_{p_{\theta},\sigma,ca^{\gamma}}(f)$. In particular, we see that $T_{\theta}=\pi_{p_{\theta},\sigma,c}(f)$, $T_{i\gamma}=\pi_{p_0,\sigma,ca^{\gamma}}(f)$ and $T_{1+i\gamma}=\pi_{p_1,\sigma,ca^{\gamma}}(f)$. Lemma \ref{lem:(Tz)_is_admissible} establishes that the family $\net{T}{z}{\Strip}$ is \emph{admissible}.
\begin{lem}\label{lem:(Tz)_is_admissible}
Let $\net{T}{z}{\Strip}$ be the family of bounded linear operators $S(\Omega,\mu)\rightarrow L^0(\Omega,\mu)$ defined in (\ref{eqn:def:Tz}). For every pair of simple functions $\xi,\eta\in S(\Omega,\mu)$, the map $\Strip\rightarrow\C$ given by
\begin{align}\label{eqn:Tz_is_admissible_map:S->C}
z\mapsto\integral{(T_z\xi)\eta}{\Omega}{\mu}
\end{align}
is continuous on $\Strip$ and analytic on $\Strip^{\circ}$. Moreover,
\begin{align}\label{eqn:Tz_is_admissible_sup_log_abs_int}
\sup_{z\in S}\log\abs{\integral{(T_z\xi)\eta}{\Omega}{\mu}}<\infty.
\end{align}
\end{lem}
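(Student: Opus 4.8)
The plan is to express $\int_{\Omega}(T_z\xi)\eta\,\mathrm{d}\mu$ as an absolutely convergent double integral over $\Omega\times G$, to exhibit a single $z$-independent integrable majorant for its integrand, and then to read off continuity, analyticity and the bound (\ref{eqn:Tz_is_admissible_sup_log_abs_int}) from it via Fubini's theorem, dominated convergence and Morera's theorem. First I would isolate the $z$-dependence. Writing $w_s=\tfrac{\mathrm{d}s.\mu}{\mathrm{d}\mu}\ge 0$, for $z=\theta+i\gamma\in\Strip$ one has, on $\{w_s>0\}$,
\[
(c_sa^{\gamma}_s)(\omega)\Bigl(\tfrac{\mathrm{d}s.\mu}{\mathrm{d}\mu}\Bigr)^{1/p_{\theta}}(\omega)=c_s(\omega)\exp\bigl(E(z)\log w_s(\omega)\bigr),\qquad E(z):=\tfrac{1}{p_{\theta}}+i\gamma\Bigl(\tfrac1{p_0}-\tfrac1{p_1}\Bigr),
\]
while on the null set $\{w_s=0\}$ the integrand vanishes since $\real E(z)=1/p_{\theta}>0$. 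As $E$ is affine in $z$ and $\log w_s(\omega)\in\R$, for fixed simple functions $\xi,\eta$ and fixed $\omega,s$ the map $z\mapsto f(s)(c_sa^{\gamma}_s)(\omega)(\tfrac{\mathrm{d}s.\mu}{\mathrm{d}\mu})^{1/p_{\theta}}(\omega)\xi(s^{-1}.\omega)\eta(\omega)$ is an entire function of $z$.

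The technical heart of the proof is the integrable majorant. Since $\abs{c_s}=\abs{a^{\gamma}_s}=1$ and $\real E(z)=1/p_{\theta}$ lies between $1/p_0$ and $1/p_1$ for $z\in\Strip$, the modulus of the integrand above is bounded, for \emph{every} $z\in\Strip$, by
\[
M(\omega,s):=\abs{f(s)}\bigl(w_s(\omega)^{1/p_0}+w_s(\omega)^{1/p_1}\bigr)\abs{\xi(s^{-1}.\omega)}\abs{\eta(\omega)},
\]
using $w^{t}\le w^{1/p_0}+w^{1/p_1}$ for $w\ge 0$ and $t$ between the two exponents. The key observation is that $M\in L^1(\Omega\times G)$: for $j\in\{0,1\}$ the function $\omega\mapsto w_s(\omega)^{1/p_j}\xi(s^{-1}.\omega)$ is the image of $\xi$ under the surjective isometry $u_{\sigma_s}$ of $L^{p_j}(\Omega,\mu)$ (Theorem \ref{thm:Banach-Lamperti}), so it has $L^{p_j}$-norm $\norm{\xi}{p_j}$ for every $s$; Hölder's inequality then gives $\int_{\Omega}w_s(\omega)^{1/p_j}\abs{\xi(s^{-1}.\omega)}\abs{\eta(\omega)}\,\mathrm{d}\mu(\omega)\le\norm{\xi}{p_j}\norm{\eta}{p_j'}$ uniformly in $s$, whence $\int_{\Omega}\int_G M\,\mathrm{d}\mu_G\,\mathrm{d}\mu\le\norm{f}{1}\bigl(\norm{\xi}{p_0}\norm{\eta}{p_0'}+\norm{\xi}{p_1}\norm{\eta}{p_1'}\bigr)=:C<\infty$, all the norms on the right being finite because $\xi,\eta$ are simple. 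In particular Fubini's theorem applies, so $\int_{\Omega}(T_z\xi)\eta\,\mathrm{d}\mu$ is a genuine absolutely convergent double integral.

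From here the conclusions follow routinely. The bound (\ref{eqn:Tz_is_admissible_sup_log_abs_int}) is immediate, since $\abs{\int_{\Omega}(T_z\xi)\eta\,\mathrm{d}\mu}\le C$ for all $z\in\Strip$, so the supremum there is at most $\log C<\infty$. Continuity on $\Strip$ follows from the dominated convergence theorem applied along an arbitrary sequence $z_n\to z$ (the integrands converge pointwise, being entire in $z$, and are dominated by the $z$-independent $M\in L^1$; as $\Strip$ is metrizable, sequential continuity suffices). For analyticity on $\Strip^{\circ}$ I would apply Morera's theorem: given a closed triangle $\Delta\subset\Strip^{\circ}$, Fubini's theorem (legitimate since $M\in L^1$ and $\partial\Delta$ has finite length) gives
\[
\oint_{\partial\Delta}\Bigl(\int_{\Omega}(T_z\xi)\eta\,\mathrm{d}\mu\Bigr)\mathrm{d}z=\int_{\Omega}\int_G f(s)c_s(\omega)\xi(s^{-1}.\omega)\eta(\omega)\Bigl(\oint_{\partial\Delta}\exp\bigl(E(z)\log w_s(\omega)\bigr)\,\mathrm{d}z\Bigr)\mathrm{d}\mu_G(s)\,\mathrm{d}\mu(\omega)=0,
\]
the inner contour integral vanishing by Cauchy's theorem since $z\mapsto\exp(E(z)\log w_s(\omega))$ is entire; combined with the continuity just established, Morera's theorem gives holomorphy on $\Strip^{\circ}$.

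The main obstacle is really just the integrability of $M$: a priori the $G$-integration need not be controllable, and what rescues it is the identification of $\omega\mapsto w_s(\omega)^{1/p_j}\xi(s^{-1}.\omega)$ with $\xi$ transported by the isometry $u_{\sigma_s}$, whose $L^{p_j}$-norm is independent of $s$ — this is exactly what lets the factor $\norm{f}{1}$ come out after Hölder's inequality. A few minor points must be dispatched along the way: the choice of the principal holomorphic branch in $w_s(\omega)^{E(z)}=\exp(E(z)\log w_s(\omega))$, the harmless null set $\{w_s=0\}$ where the integrand simply vanishes, and — should $G$ fail to be $\sigma$-compact — restricting every integral over $G$ to the $\sigma$-finite set $\{f\neq 0\}$, so that the various appeals to Fubini's theorem are valid.
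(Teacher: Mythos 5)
Your proof is correct, and it rests on the same decisive estimate as the paper's: a $z$-independent integrable majorant on $\Omega\times G$ built from $w_s^{1/p_0}$ and $w_s^{1/p_1}$, whose $G$-integral is tamed by recognizing $\omega\mapsto w_s(\omega)^{1/p_j}\xi(s^{-1}.\omega)$ as $u_{\sigma_s}\xi$ (so its $L^{p_j}$-norm is $\norm{\xi}{p_j}$ for every $s$) and applying H\"older; the bound (\ref{eqn:Tz_is_admissible_sup_log_abs_int}) then drops out identically in both arguments. Where you genuinely diverge is in extracting continuity and analyticity. The paper reduces to the two one-variable maps $\theta\mapsto\integral{(T_{\theta+i\gamma}\xi)\eta}{\Omega}{\mu}$ and $\gamma\mapsto\integral{(T_{\theta+i\gamma}\xi)\eta}{\Omega}{\mu}$, and proves each is continuous and differentiable by nested applications of dominated convergence, computing the partial derivatives explicitly and dominating them using that $\log(w_s)\eta\in L^p$ because $\eta$ has support of finite measure. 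You instead note that for a.e.\ fixed $(\omega,s)$ the integrand is an entire function of $z$ (the exponent $E(z)$ being affine in $z$ under the Section \ref{sec:interpolations} normalization of $p_\theta$), deduce joint continuity on $\Strip$ from a single application of dominated convergence, and get holomorphy from Morera's theorem by pushing the triangle contour integral inside via Fubini. Your route is arguably the more airtight of the two: separate continuity and separate smoothness in $\theta$ and $\gamma$ do not by themselves give joint continuity or holomorphy --- one must also verify the Cauchy--Riemann relation, which the paper's displayed partial derivatives do satisfy ($\partial_\gamma=i\,\partial_\theta$) but which is not invoked explicitly --- whereas continuity plus Morera requires no such supplement. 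Your side remarks (that $\{w_s=0\}$ is null precisely because the action preserves the measure class, and that Fubini is justified by restricting to the $\sigma$-finite set $\{f\neq0\}$) are the right ones to make.
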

\begin{proof}
Fix $\xi,\eta\in S(\Omega,\mu)$. The map in (\ref{eqn:Tz_is_admissible_map:S->C}) is continuous on $\Strip$ and analytic on $\Strip^{\circ}$ if and only if the two maps
\begin{align}\label{eqn:Tz_is_admissible_map:S->C_split}
\theta\mapsto\integral{(T_{\theta+i\gamma}\xi)\eta}{\Omega}{\mu}\qquad\mbox{and}\qquad\gamma\mapsto\integral{(T_{\theta+i\gamma}\xi)\eta}{\Omega}{\mu}
\end{align}
are continuous on $[0,1]$ and on $\R$, respectively, and $C^{\infty}$ on $(0,1)$ and on $\R$, respectively.

By the Tonelli-Fubini Theorem, we have
\begin{align*}
\integral{(T_{\theta+i\gamma}\xi)\eta}{\Omega}{\mu}%&=\integral{\eta(\omega)\integral{f(s)(c_sa^{\gamma}_s)(\omega)\left(\frac{\mathrm{d}s.\mu}{\mathrm{d}\mu}\right)^{1/p_{\theta}}(\omega)\xi(s^{-1}.\omega)}{G}{\mu_G(s)}}{\Omega}{\mu(\omega)}\\ 
%&=\integral{f(s)\integral{(c_sa^{\gamma}_s)(\omega)\left(\frac{\mathrm{d}s.\mu}{\mathrm{d}\mu}\right)^{1/p_{\theta}}(\omega)\xi(s^{-1}.\omega)\eta(\omega)}{\Omega}{\mu(\omega)}}{G}{\mu_G(s)},\\
&=\integral{\eta\integral{f(s)\pi_{p_{\theta},\sigma,ca^{\gamma}}(s)\xi}{G}{\mu_G(s)}}{\Omega}{\mu}\\
&=\integral{f(s)\integral{[\pi_{p_{\theta},\sigma,ca^{\gamma}}(s)\xi]\eta}{\Omega}{\mu}}{G}{\mu_G(s)}
\end{align*}
Consider, for each $s\in G$, the integrand of the inner integral as a function in the three variables $(\theta,\gamma,\omega)\in[0,1]\times\R\times\Omega$:
\begin{align}\label{eqn:Tz_is_admissible_integrand}
(\theta,\gamma,\omega)\mapsto [\pi_{p_{\theta},\sigma,ca^{\gamma}}(s)\xi](\omega)\eta(\omega)=(c_sa^{\gamma}_s)(\omega)\left(\frac{\mathrm{d}s.\mu}{\mathrm{d}\mu}\right)^{1/p_{\theta}}(\omega)\xi(s^{-1}.\omega)\eta(\omega)
\end{align}
This function is measurable in $\omega$ and continuous in $\theta$ and $\gamma$, respectively. Further, since $c_sa^{\gamma}_s$ takes values in $\T$, and since $x^{1/p}$ is monotone as a function in $p$, we have
\begin{align*}
&\abs{[\pi_{p_{\theta},\sigma,ca^{\gamma}}(s)\xi](\omega)\eta(\omega)}\\
&\phantom{c_sa^{\gamma}_s)(\omega)}\leq\max\left\{{\abs{\left(\frac{\mathrm{d}s.\mu}{\mathrm{d}\mu}(\omega)\right)^{1/p_0}\xi(s^{-1}.\omega)\eta(\omega)},\abs{\left(\frac{\mathrm{d}s.\mu}{\mathrm{d}\mu}(\omega)\right)^{1/p_1}\xi(s^{-1}.\omega)\eta(\omega)}}\right\}
\end{align*}
An application of Hölder's inequality and the change of variables formula shows that both terms in this maximum are integrable functions with integrals bounded by $\norm{\xi}{p_0}\norm{\eta}{p'_0}$ and $\norm{\xi}{p_1}\norm{\eta}{p'_1}$, respectively. The Lebesgue Dominated Convergence Theorem then implies continuity in each of the coordinates of the map $[0,1]\times\R\rightarrow\C$ given by
\begin{align}\label{eqn:Tz_is_admissible_fct_in_theta_and_gamma}
(\theta,\gamma)\mapsto\integral{[\pi_{p_{\theta},\sigma,ca^{\gamma}}(s)\xi]\eta}{\Omega}{\mu}.
\end{align}
A second application of the Lebesgue Dominated Convergence Theorem implies continuity of the maps in equation (\ref{eqn:Tz_is_admissible_map:S->C_split}). The argument that the two maps in (\ref{eqn:Tz_is_admissible_map:S->C_split}) are differentiable on $(0,1)$ and on $\R$, respectively, is analogous. Observe that $a^{\gamma}$ is differentiable in $\gamma$ and that $(\mathrm{d}s.\mu/\mathrm{d}\mu)^{1/p_{\theta}}$ is differentiable in $\theta$. We have,
\begin{align*}
\frac{\partial}{\partial\gamma}[\pi_{p_{\theta},\sigma,ca^{\gamma}}\xi](\omega)\eta(\omega)&=\left(\frac{1}{p_0}-\frac{1}{p_1}\right)i\log\left(\frac{\mathrm{d}s.\mu}{\mathrm{d}\mu}(\omega)\right)[\pi_{p_{\theta},\sigma,ca^{\gamma}}\xi](\omega)\eta(\omega),\\
\frac{\partial}{\partial\theta}[\pi_{p_{\theta},\sigma,ca^{\gamma}}\xi](\omega)\eta(\omega)&=\left(\frac{1}{p_0}-\frac{1}{p_1}\right)\log\left(\frac{\mathrm{d}s.\mu}{\mathrm{d}\mu}(\omega)\right)[\pi_{p_{\theta},\sigma,ca^{\gamma}}\xi](\omega)\eta(\omega).
\end{align*}
The function $\log(\mathrm{d}s.\mu/\mathrm{d}\mu)\eta$ lies in $L^p(\Omega,\mu)$, for any $1<p<\infty$, because the measure of the support of $\eta$ is finite. Hence, both partial derivatives are bounded by an integrable function not depending on $\theta$ and $\gamma$. An application of the Lebesgue Dominated Convergence Theorem then implies differentiability of the map of equation (\ref{eqn:Tz_is_admissible_fct_in_theta_and_gamma}) in $\theta$ on $(0,1)$ and in $\gamma$ on $\R$. A second application of the Lebesgue Dominated Convergence Theorem implies differentiability of the maps in equation (\ref{eqn:Tz_is_admissible_map:S->C_split}).

It remains to show the inequality of equation (\ref{eqn:Tz_is_admissible_sup_log_abs_int}). With $p_{\theta}'$ denoting the Hölder conjugate of $p_{\theta}$, Hölder's inequality yields that
\begin{align*}
\abs{\integral{(T_{\theta+i\gamma}\xi)\eta}{\Omega}{\mu}}
\leq\norm{\pi_{q_{\theta},\sigma,ca^{\gamma}}(f)}{}\norm{\xi}{p_{\theta}}\norm{\eta}{p'_{\theta}}
\leq\norm{f}{1}\norm{\xi}{p_{\theta}}\norm{\eta}{p'_{\theta}}.
\end{align*}
The right hand side of this inequality depends only on $\theta$ and not on $\gamma$. Hence,
\[
\sup_{z\in S}\abs{\integral{(T_z\xi)\eta}{\Omega}{\mu}}\leq\norm{f}{1}\sup_{\theta\in[0,1]}\norm{\xi}{p_{\theta}}\norm{\eta}{p'_{\theta}}<\infty.
\]
The inequality of equation (\ref{eqn:Tz_is_admissible_sup_log_abs_int}) follows.
\end{proof}

With an admissible family at hand, we may employ Stein's Interpolation Theorem.
\begin{thm}\label{thm:FstarLp->FstarLq}
Let $G$ be a locally compact second countable group and let $1\leq p<q\leq2$, The identity map on $L^1(G)$ extends to a contractive $^*$-homomorphism $F_{L^p}^*(G)\rightarrow F_{L^q}^*(G)$ with dense range. 
\end{thm}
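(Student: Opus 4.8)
The plan is to reduce the statement to the single norm estimate
\[
\norm{f}{F_{L^q}^*}\leq\norm{f}{F_{L^p}^*}\qquad\text{for every }f\in L^1(G).
\]
Granting this, the identity map on $L^1(G)$ extends by continuity to a contraction $F_{L^p}^*(G)\rightarrow F_{L^q}^*(G)$, which is automatically a $^*$-homomorphism (being the continuous extension of the identity on the dense $^*$-subalgebra $L^1(G)$, whose involution and product induce those of both completions) with dense range (since $L^1(G)$ is dense in $F_{L^q}^*(G)$). The case $p=1$ is immediate from $\norm{\pi(f)}{}\leq\norm{f}{1}$, so from now on I take $1<p<q\leq2$, which gives $1<p<2<p'<\infty$ and $2\leq q'<p'$ for the Hölder conjugates. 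By definition $\norm{f}{F_{L^q}^*}$ is the supremum of $\norm{\pi(f)}{}$ over all isometric representations $\pi$ of $G$ on an $L^q$-space or on an $L^{q'}$-space, so it suffices to fix one such $\pi$ and prove $\norm{\pi(f)}{}\leq\norm{f}{F_{L^p}^*}$. A routine reduction — using second countability of $G$ together with the Banach--Lamperti description of the isometries (available on an $L^r$-space when $r\neq2$) to pass to a $\pi$-invariant band supported on a $\sigma$-finite set — lets me assume the underlying measure space $(\Omega,\mu)$ is $\sigma$-finite.

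Consider first the case where $\pi$ acts on $L^r(\Omega,\mu)$ with $r\in\{q,q'\}$ and $r\neq2$ (so $q<2$). By Corollary \ref{cor:isom_reps_on_Lp} there are a measure class preserving action $G\act{\sigma}(\Omega,\mu)$ and a cocycle $c\in Z^1(\sigma;\T)$ with $\pi=\pi_{r,\sigma,c}$, and I apply the interpolation construction of Section \ref{sec:interpolations} with $p_0=p$ and $p_1=p'$ (legitimate, as $1\leq p_0<p_1<\infty$). Since $r\in(p,p')$ — because $p<q\leq2<p'$ when $r=q$, and because $p<2\leq q'<p'$ when $r=q'$ — there is $\theta\in(0,1)$ with $p_\theta=r$. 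By Lemma \ref{lem:(Tz)_is_admissible} the family $\net{T}{z}{\Strip}$ defined in (\ref{eqn:def:Tz}) is admissible, and its boundary operators $T_{i\gamma}=\pi_{p,\sigma,ca^\gamma}(f)$ and $T_{1+i\gamma}=\pi_{p',\sigma,ca^\gamma}(f)$ act on the $L^p$-space $L^p(\Omega,\mu)$ and the $L^{p'}$-space $L^{p'}(\Omega,\mu)$, respectively. As the seminorm defining $F_{L^p}^*(G)$ is a supremum over isometric representations on both $L^p$- and $L^{p'}$-spaces, this gives $\norm{T_{i\gamma}\xi}{p}\leq\norm{f}{F_{L^p}^*}\norm{\xi}{p}$ and $\norm{T_{1+i\gamma}\xi}{p'}\leq\norm{f}{F_{L^p}^*}\norm{\xi}{p'}$ for all simple $\xi$ and all $\gamma\in\R$, with constants independent of $\gamma$. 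Hence in Stein's Interpolation Theorem (Theorem \ref{thm:Stein}) one may take $M_0(\gamma)=M_1(\gamma)=\norm{f}{F_{L^p}^*}$ (first replacing this norm by $\norm{f}{F_{L^p}^*}+\varepsilon$ if it vanishes); since the Poisson kernel of the strip normalizes to one — equivalently, the harmonic extension of constant boundary data is that same constant — the resulting constant is $M_\theta=\norm{f}{F_{L^p}^*}$, so $\norm{T_\theta\xi}{p_\theta}\leq\norm{f}{F_{L^p}^*}\norm{\xi}{p_\theta}$ for all simple $\xi$. As $T_\theta$ agrees with $\pi_{r,\sigma,c}(f)=\pi(f)$ on simple functions and these are dense in $L^r(\Omega,\mu)=L^{p_\theta}(\Omega,\mu)$, I conclude $\norm{\pi(f)}{}\leq\norm{f}{F_{L^p}^*}$.

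There remains the case $q=2$, where $\pi$ is an isometric representation of $G$ on a Hilbert space — a unitary representation — and the Banach--Lamperti theorem no longer applies; this is the step I expect to be the real obstacle, and it is handled by the Gaussian functor trick. Restricting $\pi$ to a separable cyclic invariant subspace loses nothing (by second countability of $G$, the operator norm of $\pi(f)$ is the supremum of the norms of the restrictions to such subspaces), so I may assume the Hilbert space $H$ is separable. Attach to $H$ its complex Gaussian probability space $(\Omega,\mu)$, together with the measure preserving action of $G$ obtained by applying the Gaussian functor to $\pi$, and let $\kappa_p$ be the (strongly continuous, isometric) Koopman representation of $G$ on $L^p(\Omega,\mu)$. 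The assignment $\xi\mapsto Z_\xi$ sending $\xi\in H$ to its associated complex Gaussian random variable is \emph{complex}-linear and satisfies $\norm{Z_\xi}{p}=d_p\norm{\xi}{}$, where $d_p$ is the $L^p$-norm of a standard complex Gaussian; because a complex linear combination of jointly complex Gaussian variables is again a complex Gaussian of the expected variance, $d_p^{-1}Z_{(\cdot)}$ extends to a linear isometry of $H$ onto the first (holomorphic) Wiener chaos $\mathcal{H}=\clvspan\set{Z_\xi}{\xi\in H}$, which is a closed, $\kappa_p$-invariant subspace of $L^p(\Omega,\mu)$ on which $\kappa_p$ restricts, via this isometry, to a representation equivalent to $\pi$. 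Consequently
\[
\norm{\pi(f)}{}=\norm{\kappa_p(f)|_{\mathcal{H}}}{}\leq\norm{\kappa_p(f)}{}\leq\norm{f}{F_{L^p}^*},
\]
the last step because $\kappa_p$ is an isometric representation of $G$ on an $L^p$-space. In both cases $\norm{\pi(f)}{}\leq\norm{f}{F_{L^p}^*}$, so taking the supremum over $\pi$ yields $\norm{f}{F_{L^q}^*}\leq\norm{f}{F_{L^p}^*}$, completing the plan. The points needing care in the full proof are the $\sigma$-finiteness reduction, the standard properties of the complex Gaussian construction invoked above — strong continuity of $\kappa_p$, closedness and $G$-invariance of the first chaos, and the identity $\norm{Z_\xi}{p}=d_p\norm{\xi}{}$ together with its linear-combination form — and the elementary computation with the Poisson kernel of the strip that turns $M_0=M_1$ into $M_\theta=M_0$.
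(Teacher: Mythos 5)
Your proof is correct and, for the main case $q<2$, is essentially the paper's: the same admissible family $\net{T}{z}{\Strip}$ with endpoints $p$ and $p'$, the same application of Stein's theorem with $M_0(\gamma)=M_1(\gamma)=\norm{f}{F^*_{L^p}}$ and the normalized Poisson kernel of the strip, and the same use of Banach--Lamperti to exhaust all isometric representations on $L^q$- and $L^{q'}$-spaces. (You are in fact a little more explicit than the paper about the reduction to $\sigma$-finite measure spaces, which the paper passes over in silence.) The one genuine divergence is the endpoint $q=2$. The paper restricts scalars to obtain an orthogonal representation $\pi_{\R}$, invokes Corollary A.7.15 of \cite{BekkaDeLaHarpeValette} to contain $\pi_{\R}$ in the Koopman representation $\pi_{2,\sigma}$ of a Gaussian p.m.p.\ action viewed on $L^2(\Omega,\mu;\R)$, and then applies the interpolation inequality already established for $\pi_{2,\sigma}$ to get $\norm{\pi(f)}{}\leq\norm{\pi_{2,\sigma}(f)}{}\leq\norm{f}{F^*_{L^p}}$. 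You instead realize $H$ (up to the constant $d_p$) as the first holomorphic chaos inside $L^p$ of the complex Gaussian space and bound $\norm{\pi(f)}{}\leq\norm{\kappa_p(f)}{}\leq\norm{f}{F^*_{L^p}}$ directly, with no interpolation at that endpoint. Both are instances of the ``Gaussian functor trick''; your variant yields the slightly stronger and independently useful fact that every unitary representation is contained in an isometric $L^p$-representation (so that $C^*(G)$ is already a quotient of the non-symmetrized $F_{L^p}(G)$), at the cost of needing the complex-linearity of $\xi\mapsto Z_\xi$ and the identity $\norm{Z_\xi}{p}=d_p\norm{\xi}{}$, which you correctly isolate as the facts still to be verified.
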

\begin{proof}
Let $G\act{\sigma}(\Omega,\mu)$ be a measure class preserving action, let $c$ be a $1$-cocycle for this action and consider the representation $\pi_{q,\sigma,c}$ of $G$ on $L^q(\Omega,\mu)$. We denote by $p'$ the Hölder conjugate of $p$. Let $f\in L^1(G)$. For each $z\in\Strip$, let $T_z:S(\Omega,\mu)\rightarrow L^0(\Omega,\mu)$ be the operator defined in (\ref{eqn:def:Tz}) with $p$ in place of $p_0$ and $p'$ in place of $p_1$. That is, for each $\theta\in[0,1]$ and $\gamma\in\R$, $T_{\theta+i\gamma}=\pi_{p_{\theta},\sigma,ca^{\gamma}}(f)$, where $p\leq p_{\theta}\leq p'$ is the unique real number such that $1/p_{\theta}=\theta/p+(1-\theta)/p'$, and where $a^{\gamma}$ be the $1$-cocycle defined, for each $s\in G$, by
\begin{align*}
a^{\gamma}_s=\left(\frac{\mathrm{d}s.\mu}{\mathrm{d}\mu}\right)^{\gamma(1/p-1/p')i}.
\end{align*}
Then $(T_z)_{z\in\Strip}$ is an admissible family by Lemma \ref{lem:(Tz)_is_admissible}. Define $M_j:\R\rightarrow\R_+$, for $j\in\{0,1\}$, by
\begin{align*}
M_0(\gamma)=\norm{\pi_{p,\sigma,ca^{\gamma}}(f)}{}\qquad\mbox{and}\qquad M_1(\gamma)=\norm{\pi_{p',\sigma,ca^{\gamma}}(f)}{}\
\end{align*}
Then $M_j(\gamma)\leq\norm{f}{1}$, for $j\in\{0,1\}$ and for all $\gamma\in\R$. Because $p<q<p'$, we may find a $0<\theta_q<1$ such that $q=p_{\theta_q}$. We obtain from Stein's Interpolation Theorem, and by using that the simple functions are dense in $L^q(\Omega,\mu)$, that $\norm{\pi_{q,\sigma,c}(f)}{q}\leq M_{\theta_q}$, where
\begin{align}\label{eq:Stein_interpolation_bound_for_Mthetap}
\log M_{\theta_q}=\frac{\sin\pi\theta_q}{2}\rintegral{\frac{M_0}{\cosh\pi\gamma-\cos\pi\theta_q}+\frac{M_1}{\cosh\pi\gamma+\cos\pi\theta_q}}{-\infty}{\infty}{\gamma}
\end{align}
and
\begin{align*}
M_0=\sup_{\gamma\in\R}\log \norm{\pi_{p,\sigma,ca^{\gamma}}(f)}{},\qquad\mbox{and}\qquad M_1=\sup_{\gamma\in\R}\log \norm{\pi_{p',\sigma,ca^{\gamma}}(f)}{}.
\end{align*}
The representations appearing in the definitions of $M_0$, respectively, $M_1$, are all representations on $L^p(\Omega,\mu)$, respectively, on $L^{p'}(\Omega,\mu)$. Thus, both $M_0$ and $M_1$ are upper bounded by $\log\norm{f}{F^*_{L^p}}$. We insert this into equation (\ref{eq:Stein_interpolation_bound_for_Mthetap}) to obtain an upper bound on $M_{\theta_q}$:
\begin{align*}
\log M_{\theta_q}&\leq\left(\frac{\sin\pi\theta_q}{2}\rintegral{\frac{1}{\cosh\pi\gamma-\cos\pi\theta_q}+\frac{1}{\cosh\pi\gamma+\cos\pi\theta_q}}{-\infty}{\infty}{\gamma}\right)\log\norm{f}{F^*_{L^p}}.
\end{align*}
A computation shows that the number in the parenthesis on the right hand side of this inequality is always equal to $1$ independently of $\theta_q$:
\[
\frac{\sin\pi\theta_q}{2}\rintegral{\frac{1}{\cosh\pi\gamma-\cos\pi\theta_q}+\frac{1}{\cosh\pi\gamma+\cos\pi\theta_q}}{-\infty}{\infty}{\gamma}
=\left[\frac{1}{\pi}\tan^{-1}\left(\frac{\sinh\pi\gamma}{\sin\pi\theta_q}\right)\right]_{-\infty}^{\infty}
=1.
\]
Hence,
\begin{align}\label{eqn:thm:FstarLp->FstarLq:norm_ineq_obtained_from_Stein}
\norm{\pi_{q,\sigma,c}(f)}{}\leq M_{\theta_q}\leq\norm{f}{F^*_{L^p}}.
\end{align}
The same argument also works with the Hölder conjugate $q'$ in place of $q$.

We have two cases to consider: $q<2$ and $q=2$. Suppose first that $q<2$. Then, by the Banach-Lamperti Theorem, all isometric representations on an $L^q$-space have the form $\pi_{q,\sigma,c}$. Since we have $q'>2$, the isometric representations on an $L^{q'}$-space also have this form. Thus, taking the supremum over all representations of $G$ on any $L^q$- or $L^{q'}$-space, we see that $\norm{f}{F_{L^q}^*}\leq\norm{f}{F^*_{L^p}}$. As $f\in L^1(G)$ was arbitrary, the claim follows.

Suppose now that $q=2$ so that $F^*_{L^q}(G)=C^*(G)$. Let $(\pi,H)$ be a unitary representation of $G$ and let $(\pi_{\R},H_{\R})$ be the orthogonal representation obtained by restriction of scalars. It is clear that $\norm{\pi(f)}{}=\norm{\pi_{\R}(f)}{}$, for any $f\in L^1(G)$. By Corollary A.7.15 in \cite{BekkaDeLaHarpeValette}, we may find a probability space $(\Omega,\mu)$ and a p.m.p. action $G\act{\sigma}(\Omega,\mu)$ such that $\pi_{\R}$ is contained in $\pi_{2,\sigma}\restrict{\R}$. Here, the latter is the representation $\pi_{2,\sigma}$ on $L^2(\Omega,\mu)$ associated with the action $\sigma$ and the trivial $1$-cocycle, but viewed as an orthogonal representation on $L^2(\Omega,\mu;\R)$ via restriction of scalars. Thus, for each $f\in L^1(G)$, we have $\norm{\pi(f)}{}\leq\norm{\pi_{2,\sigma}(f)}{}$. Together with inequality (\ref{eqn:thm:FstarLp->FstarLq:norm_ineq_obtained_from_Stein}), this implies that $\norm{\pi(f)}{}\leq\norm{f}{F^*_{L^p}}$. Since $\pi$ was an arbitrary unitary representation, it follows that $\norm{f}{u}\leq\norm{f}{F^*_{L^p}}$, where $\normdot{u}$ denotes the norm on $C^*(G)$. As $f\in L^1(G)$ was arbitrary, the claim follows also in this case.
\end{proof}

\section{The dual of a symmetrized pseudofunction algebra}\label{sec:dual_of_Fstar}
Let $(\pi,E)$ be an isometric representation of the locally compact group $G$ on a reflexive Banach space $E$. The goal of this section is to describe the dual of the symmetrized $\pi$-pseudofunction algebra, $F^*_{\pi}(G)$. We shall prove Theorem \ref{introthm:Bpi+Bpiprime_is_dual_of_Fstarpi} from the introduction, restated here as Theorem \ref{thm:Bpi+Bpiprime_is_dual_of_Fstarpi}, which identifies $F^*_{\pi}(G)'$ with the sumspace of the $\pi$- and $\pi'$-Fourier-Stieltjes spaces. This is parallel to the non-symmetrized setting where $F_{\pi}(G)'$ can be identified with the $\pi$-Fourier-Stieltjes space -- a fact which we recall in Proposition \ref{prop:Bpi_is_dual_of_Fpi}.\\

Denote by $\innerdot{}$ the duality pairing between $L^1(G)$ and $L^{\infty}(G)$ given, for $f\in L^1(G)$ and $\varphi\in L^{\infty}(G)$, by
\[
\inner{f}{\varphi}=\integral{f(s)\varphi(s)}{G}{\mu_G(s)},
\]
The map $L^{\infty}(G)\rightarrow L^1(G)'$ given by $\varphi\mapsto\inner{\square}{\varphi}$ is a linear isometric isomorphism. Given an isometric representation $(\pi,E)$ of $G$, the canonical linear contraction from $L^1(G)$ to $F_{\pi}(G)$ has dense range by construction. Hence, $F_{\pi}(G)'$ embeds contractively into $L^1(G)'$. We use the identification of $L^1(G)'$ and $L^{\infty}(G)$ in Proposition \ref{prop:Bpi_is_dual_of_Fpi} to identify $F_{\pi}(G)'$ with the $\pi$-Fourier-Stieltjes space, which we recall embeds contractively into $L^{\infty}(G)$.

\begin{prop}\label{prop:Bpi_is_dual_of_Fpi}
Let $(\pi,E)$ be an isometric Banach space representation of the locally compact group $G$. The canonical identification of $L^1(G)'$ with $L^{\infty}(G)$ restricts to an isometric isomorphism between $F_{\pi}(G)'$ and $B_{\pi}(G)$.
\end{prop}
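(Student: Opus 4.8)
The plan is to exhibit an explicit isometric identification of $F_\pi(G)'$ with $B_\pi(G)$ as subspaces of $L^\infty(G) \cong L^1(G)'$. Since the canonical map $L^1(G) \to F_\pi(G)$ is a contraction with dense range, its transpose is an isometric embedding $F_\pi(G)' \hookrightarrow L^1(G)'$, and under the identification $L^1(G)' \cong L^\infty(G)$ the image consists precisely of those $\varphi \in L^\infty(G)$ for which the functional $f \mapsto \inner{f}{\varphi}$ is bounded with respect to $\norm{\cdot}{F_\pi}$, with the dual norm equal to the smallest such bound. So the whole statement amounts to checking that this description of the image coincides, isometrically, with $B_\pi(G)$ as defined earlier in the excerpt.

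First I would recall that $\norm{f}{F_\pi} = \norm{\pi(f)}{}$ for $f \in L^1(G)$. A functional $\Psi \in L^1(G)'$ extends to $F_\pi(G)$ if and only if there is a constant $C>0$ with $\abs{\Psi(f)} \le C\norm{\pi(f)}{}$ for all $f \in L^1(G)$, and the norm of the extension is the infimum of such $C$ (the extension being automatically unique by density, and existing by the universal property of the completion — or equivalently because $\Psi$ factors through the quotient $L^1(G)/\ker\pi$ on which it is bounded by $C$ times the $F_\pi$-norm, then extends to the completion). Writing $\Psi = \inner{\square}{\varphi}$ for the unique $\varphi \in L^\infty(G)$, this condition reads exactly $\abs{\varphi(f)} \le C\norm{\pi(f)}{}$ for all $f$, which is verbatim the membership condition for $\varphi \in B_\pi(G)$; and the infimal constant is by definition $\norm{\varphi}{B_\pi}$. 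Hence $\varphi \mapsto \inner{\square}{\varphi}$ restricts to a surjective linear isometry $B_\pi(G) \to F_\pi(G)'$, which is what is claimed.

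I expect no serious obstacle here: the proposition is essentially a bookkeeping exercise unwinding the definitions of $\norm{\cdot}{F_\pi}$ and $\norm{\cdot}{B_\pi}$ through the standard duality between a normed space, its completion, and a dense subspace. The one point that warrants a line of care is the claim that every bounded functional on the dense subspace $L^1(G)/(\ker\pi)$ of $F_\pi(G)$, viewed back on $L^1(G)$, arises from an element of $L^\infty(G)$ — but this is immediate since $F_\pi(G)' \hookrightarrow L^1(G)'$ and $L^1(G)' = L^\infty(G)$, so every such functional is already of the form $\inner{\square}{\varphi}$ to begin with; one only needs that such a $\varphi$ automatically annihilates nothing extra, which is handled by the density of the range of $L^1(G) \to F_\pi(G)$. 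It is worth remarking that this is the formal template that the symmetrized statement, Theorem~\ref{thm:Bpi+Bpiprime_is_dual_of_Fstarpi}, will have to refine: there $\norm{f}{F^*_\pi} = \max\{\norm{\pi(f)}{}, \norm{\pi'(f)}{}\}$, and the dual of a max-norm is naturally a sum-norm, which is where the sum space $B_\pi(G) + B_{\pi'}(G)$ enters.
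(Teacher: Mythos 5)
Your argument is correct and is essentially the paper's own proof: both directions come down to unwinding the definitions of $\normdot{F_{\pi}}$ and $\normdot{B_{\pi}}$ via the density of the image of $L^1(G)$ in $F_{\pi}(G)$ and the identification $L^1(G)'\cong L^{\infty}(G)$. The paper simply writes out the two contractive inclusions $B_{\pi}(G)\rightarrow F_{\pi}(G)'$ and $F_{\pi}(G)'\rightarrow B_{\pi}(G)$ explicitly rather than phrasing it through the transpose map, so there is nothing further to add.
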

\begin{proof}
Let $\varphi\in B_{\pi}(G)$. By construction of $B_{\pi}(G)$, we have $\abs{\inner{f}{\varphi}}\leq \norm{\varphi}{B_{\pi}}\norm{\pi(f)}{}$, for all $f\in L^1(G)$. Therefore, $\inner{\square}{\varphi}$ extends to a functional $\zeta_{\varphi}$ on $F_{\pi}(G)$ with $\norm{\zeta_{\varphi}}{}\leq \norm{\varphi}{B_{\pi}}$. 
%Thus, we obtain a contraction $B_{\pi}(G)\rightarrow F_{\pi}(G)'$ by $\varphi\mapsto\zeta_{\varphi}$.
Conversely, for each $\zeta\in F_{\pi}(G)'$ there exists a unique $\varphi_{\zeta}\in L^{\infty}(G)$ such that $\zeta(f)=\inner{f}{\varphi_{\zeta}}$, for all $f\in L^1(G)$. Then $\abs{\inner{f}{\varphi_{\zeta}}}\leq\norm{\zeta}{}\norm{\pi(f)}{}$, for all $f\in L^1(G)$. Hence, $\varphi_{\zeta}$ lies in $B_{\pi}(G)$ and $\norm{\varphi_{\zeta}}{B_{\pi}}\leq\norm{\zeta}{}$.
\end{proof}
\begin{rk}
Together with Theorem \ref{thm:Bpi_and_Bp}, Proposition \ref{prop:Bpi_is_dual_of_Fpi} implies that $F_{\pi}(G)'$ embeds contractively into the $p$-Fourier-Stieltjes algebra $B_p(G)$, and further, that this canonical embedding is an isometric isomorphism when $\pi$ is $p$-universal. This is the content of Theorem 6.6 in \cite{Runde2004repQSLP}.
\end{rk}

Let in the following $(\pi,E)$ be an isometric representation on a reflexive Banach space $E$. We turn our attention to the symmetrized $\pi$-pseudofunction algebra, $F_{\pi}^*(G)$.

\begin{thm}\label{thm:Bpi+Bpiprime_is_dual_of_Fstarpi}
Let $(\pi,E)$ be an isometric representation of the locally compact group $G$ on a reflexive Banach space $E$. The canonical identification of $L^1(G)'$ with $L^{\infty}(G)$ restricts to an isometric isomorphism between $F_{\pi}^*(G)'$ and the sum space $B_{\pi}(G)+B_{\pi'}(G)$.
\end{thm}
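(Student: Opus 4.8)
The strategy is to identify $F_\pi^*(G)$ with a concrete closed subspace of the $\ell^\infty$-direct sum $E_\pi \oplus_\infty E_{\pi'}$ and then dualize, turning the max-norm on $L^1(G)$ into an $\ell^1$-sum norm on the dual side. More precisely, consider the map $L^1(G) \to E_\pi \oplus_\infty E_{\pi'}$, or rather the representation $\rho = \pi \oplus \pi'$ on $F := E \oplus_\infty E'$; by construction $\|\rho(f)\| = \max\{\|\pi(f)\|, \|\pi'(f)\|\} = \|f\|_{F_\pi^*}$, so $F_\pi^*(G) = F_\rho(G)$ is the $\rho$-pseudofunction algebra. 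Applying Proposition \ref{prop:Bpi_is_dual_of_Fpi} to $\rho$, I get $F_\pi^*(G)' = F_\rho(G)' \cong B_\rho(G)$ isometrically, via the identification with $L^\infty(G)$. So the whole theorem reduces to the claim that, as subspaces of $L^\infty(G)$ with their respective norms,
\[
B_\rho(G) = B_\pi(G) + B_{\pi'}(G)
\]
isometrically, where the right-hand side carries the sum-space norm displayed after Theorem \ref{introthm:Bpi+Bpiprime_is_dual_of_Fstarpi}.

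\textbf{Proving the identification of Fourier--Stieltjes spaces.} For the inclusion $B_\pi(G) + B_{\pi'}(G) \subseteq B_\rho(G)$ with norm domination, note that since $\|\pi(f)\| \le \|\rho(f)\|$ and $\|\pi'(f)\| \le \|\rho(f)\|$, both $B_\pi(G)$ and $B_{\pi'}(G)$ embed contractively into $B_\rho(G)$; given $\varphi = \varphi_0 + \varphi_1$ with $\varphi_0 \in B_\pi(G)$, $\varphi_1 \in B_{\pi'}(G)$, the triangle inequality in $B_\rho(G)$ gives $\|\varphi\|_{B_\rho} \le \|\varphi_0\|_{B_\pi} + \|\varphi_1\|_{B_{\pi'}}$, and taking the infimum over decompositions yields $\|\varphi\|_{B_\rho} \le \|\varphi\|_{B_\pi + B_{\pi'}}$. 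For the reverse inclusion and reverse inequality, I would use the concrete description of $B_\rho(G)$ as matrix coefficients. Here reflexivity of $E$ is essential: $F = E \oplus_\infty E'$ is reflexive, $F' = E' \oplus_1 E$, and by Theorem \ref{thm:Bpi_as_space_of_matrix_coeff} applied to the reflexive representation $\rho$ (or, more simply, by a direct Hahn--Banach argument on the reflexive space $F$, since every functional $\zeta$ on $F_\rho(G)$ extends to $\overline{\rho(L^1(G))} \subseteq B(F)$ and can be realized as $\langle \rho(\square)\xi, \eta\rangle$ with $\|\xi\|\,\|\eta\| = \|\zeta\|$ — this is the standard fact that the dual of a $\pi$-pseudofunction algebra is the norm-closure of matrix coefficients, and for reflexive $E$ the matrix coefficients themselves already form a complete, hence closed, space), any $\varphi \in B_\rho(G)$ of norm $\le 1$ can be written $\varphi = \langle \rho(\square)\xi, \eta\rangle$ with $\xi = (\xi_0, \xi_1) \in E \oplus_\infty E'$, $\eta = (\eta_0, \eta_1) \in E' \oplus_1 E$, and $\|\xi\|\,\|\eta\| \le 1$, i.e. $\max\{\|\xi_0\|, \|\xi_1\|\} \cdot (\|\eta_0\| + \|\eta_1\|) \le 1$. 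Then
\[
\varphi(t) = \langle \pi(t)\xi_0, \eta_0\rangle + \langle \pi'(t)\xi_1, \eta_1\rangle,
\]
so $\varphi = \varphi_0 + \varphi_1$ with $\varphi_0 = \langle \pi(\square)\xi_0, \eta_0\rangle \in B_\pi(G)$ of norm $\le \|\xi_0\|\,\|\eta_0\|$ and $\varphi_1 = \langle \pi'(\square)\xi_1, \eta_1\rangle \in B_{\pi'}(G)$ of norm $\le \|\xi_1\|\,\|\eta_1\|$. Hence
\[
\|\varphi\|_{B_\pi + B_{\pi'}} \le \|\xi_0\|\,\|\eta_0\| + \|\xi_1\|\,\|\eta_1\| \le \max\{\|\xi_0\|, \|\xi_1\|\}(\|\eta_0\| + \|\eta_1\|) \le 1,
\]
which gives $\|\varphi\|_{B_\pi + B_{\pi'}} \le \|\varphi\|_{B_\rho}$ after rescaling. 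Combining the two inequalities proves the isometric identity of the two spaces inside $L^\infty(G)$, and composing with the identification $F_\pi^*(G)' \cong B_\rho(G)$ completes the proof.

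\textbf{Main obstacle.} The routine parts are the triangle-inequality direction and the bookkeeping with the $\ell^\infty$/$\ell^1$ dualities. The delicate point is justifying that every element of $F_\pi^*(G)' = B_\rho(G)$ is \emph{exactly} a matrix coefficient of $\rho$ (not merely a limit of such), with the norm achieved — this is where reflexivity of $E$ is used, via Theorem \ref{thm:Bpi_as_space_of_matrix_coeff} (which provides an auxiliary representation $\rho_0$ realizing $B_\rho$ as honest matrix coefficients) together with the observation that one may take $\rho_0$ of the same $\ell^\infty$-direct-sum shape, or alternatively via the reflexive-space Hahn--Banach argument sketched above. One should also check that the decomposition $F' = E' \oplus_1 E$ and the formula for $\rho'$ are compatible with the canonical $L^1$--$L^\infty$ pairing, so that the matrix-coefficient decomposition of $\varphi$ lands in $L^\infty(G)$ correctly; this is a direct computation using $\langle \rho(t)\xi, \eta \rangle = \langle \pi(t)\xi_0, \eta_0\rangle + \langle \eta_1, \pi(t^{-1})\xi_1\rangle$ and the definition of $\pi'$.
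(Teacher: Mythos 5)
Your reduction of the theorem to the isometric identity $B_{\pi\oplus\pi'}(G)=B_{\pi}(G)+B_{\pi'}(G)$ is correct, and the overall architecture (direct-sum representation, $\ell^\infty$/$\ell^1$ duality) matches the paper's; the easy inclusion via the triangle inequality is fine. The gap is in the hard direction, and neither of your two proposed justifications works as stated. The Hahn--Banach variant rests on the claim that every bounded functional on the closed subspace $\overline{\rho(L^1(G))}\subseteq\mathcal{L}(E\oplus_\infty E')$ can be realized as a vector functional $T\mapsto\inner{T\xi}{\eta}$ with $\norm{\xi}{}\norm{\eta}{}$ equal to the norm of the functional; this is false. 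Hahn--Banach extends the functional to an element of $\mathcal{L}(F)'$, which is vastly larger than the set of vector functionals even when $F$ is reflexive---already for $F$ an infinite-dimensional Hilbert space, $\mathcal{L}(F)'$ is strictly larger than the trace class. Reflexivity of $E$ enters the theorem only to make $\pi''=\pi$ and hence $F^*_{\pi}(G)$ well defined; it does not turn functionals on operator algebras into matrix coefficients (the paper itself warns that elements of $B_{\pi}(G)$ need not be coefficients of $\pi$). The Cowling--Fendler route (Theorem \ref{thm:Bpi_as_space_of_matrix_coeff}) produces an auxiliary representation $\rho_0$ on a new space $E_0$ realizing every element of $B_{\rho}(G)$ as an exact coefficient, but $E_0$ arises from a direct-sum/ultraproduct construction and there is no reason it retains the shape $E_0^{(1)}\oplus_\infty E_0^{(2)}$ with $\rho_0=\sigma\oplus\sigma'$, $B_{\sigma}\subseteq B_{\pi}$ and $B_{\sigma'}\subseteq B_{\pi'}$; without that, the coordinatewise splitting $\varphi=\varphi_0+\varphi_1$ is not available. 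The ``observation'' you defer to is precisely the missing content.

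The fix is to abandon matrix coefficients altogether, which is what the paper does: regard $\pi\oplus\pi'$ as landing in the block-diagonal algebra $\mathcal{L}(E)\oplus_\infty\mathcal{L}(E')$, whose dual is $\mathcal{L}(E)'\oplus_1\mathcal{L}(E')'$, so a norm-preserving Hahn--Banach extension of $\zeta\in F^*_{\pi}(G)'$ automatically splits as $\Phi\oplus\Psi$ with $\norm{\Phi}{}+\norm{\Psi}{}=\norm{\zeta}{}$. The components $\Phi$, $\Psi$ are arbitrary functionals, not vector functionals, but that suffices: the estimate $\abs{\Phi(\pi(f))}\leq\norm{\Phi}{}\norm{\pi(f)}{}$ shows $\Phi\circ\pi\in B_{\pi}(G)$ with $\norm{\Phi\circ\pi}{B_{\pi}}\leq\norm{\Phi}{}$ directly from the paper's definition of $B_{\pi}(G)$ as the space of functions dominated by $\norm{\pi(\cdot)}{}$---no coefficient realization is needed. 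With this replacement your argument closes and coincides with the paper's proof.
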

\begin{proof}
The direct sum of $\pi$ and its dual representation $\pi'$ defines in a natural way a linear map $\pi\oplus\pi':L^1(G)\rightarrow \mathcal{L}(E)\oplus \mathcal{L}(E')$ given by $(\pi\oplus\pi')(f)=\pi(f)\oplus\pi'(f)$. By construction of $F^*_{\pi}(G)$, this extends to a linear isometry $\pi\oplus\pi':F^*_{\pi}(G)\rightarrow \mathcal{L}(E)\oplus_{\infty} \mathcal{L}(E')$ whose image is a closed subspace. We identify the dual of $\mathcal{L}(E)\oplus_{\infty} \mathcal{L}(E')$ with $\mathcal{L}(E)'\oplus_1 \mathcal{L}(E')'$ and denote by $(\pi\oplus\pi')^*:\mathcal{L}(E)'\oplus_1 \mathcal{L}(E')'\rightarrow F^*_{\pi}(G)'$ the Banach space adjoint of $\pi\oplus\pi'$. Concretely, $(\pi\oplus\pi')^*$ is given by, for $\Phi\in \mathcal{L}(E)'$, $\Psi\in \mathcal{L}(E')'$ and $x\in F^*_{\pi}(G)$,
\begin{align*}
(\pi\oplus\pi')^*(\Phi\oplus \Psi)(x)=\Phi\oplus \Psi((\pi\oplus\pi')(x))=\Phi(\pi(x))+\Psi(\pi'(x)),
\end{align*}
Because $\pi\oplus\pi'$ is an isometric isomorphism onto its image, $(\pi\oplus\pi')^*$ considered as a map $(\imag(\pi\oplus\pi'))'\rightarrow F_{\pi}^*(G)'$ is an isometric isomorphism by Lemma 3.6 in \cite{GardellaThiel2014GpAlgActingOnLp}. Let $\zeta\in F_{\pi}^*(G)'$ and let $\varphi_{\zeta}$ be the unique function in $L^{\infty}(G)$ such that $\zeta(f)=\inner{f}{\varphi_{\zeta}}$, for $f\in L^1(G)$. By the Hahn-Banach Extension Theorem, we can find $\Phi_{\zeta}\in\mathcal{L}(E)'$ and $\Psi_{\zeta}\in\mathcal{L}(E')'$ such that
\[
(\pi\oplus\pi')^*(\Phi_{\zeta}\oplus \Psi_{\zeta})=\zeta\qquad\mbox{and}\qquad \norm{\zeta}{}=\norm{\Phi_{\zeta}}{}+\norm{\Psi_{\zeta}}{}
\]
Then $\Phi_{\zeta}\circ\pi+\Psi_{\zeta}\circ\pi'$, viewn as a function on $G$, satisfies, for all $f\in L^1(G)$,
\[
\zeta(f)=\inner{f}{\Phi_{\zeta}\circ\pi+\Psi_{\zeta}\circ\pi'},
\]
Thus, $\Phi_{\zeta}\circ\pi+\Psi_{\zeta}\circ\pi'$ equals $\varphi_{\zeta}$ by the uniqueness of $\varphi_{\zeta}$. Now $\Phi_{\zeta}\circ\pi$ lies in $B_{\pi}(G)$ with $\norm{\Phi_{\zeta}\circ\pi}{B_{\pi}}\leq\norm{\Phi_{\zeta}}{}$, and $\Psi_{\zeta}\circ\pi'$ lies in $B_{\pi'}(G)$ with $\norm{\Psi_{\zeta}\circ\pi'}{B_{\pi'}}\leq\norm{\Psi_{\zeta}}{}$. Therefore, $\varphi_{\zeta}$ lies in the sum space $B_{\pi}(G)+B_{\pi'}(G)$ with
\[
\norm{\varphi_{\zeta}}{B_{\pi}+B_{\pi'}}\leq\norm{\Phi_{\zeta}}{}+\norm{\Psi_{\zeta}}{}=\norm{\zeta}{}.
\]
Hence, the canonical map $L^1(G)'\rightarrow L^{\infty}(G)$ given by $\zeta\mapsto\varphi_{\zeta}$ restricts to a contraction $F_{\pi}^*(G)'\rightarrow B_{\pi}(G)+B_{\pi'}(G)$. 

Conversely, let $\varphi\in B_{\pi}(G)+B_{\pi'}(G)$. For each $\varepsilon>0$, we can find $\varphi_{1,\varepsilon}\in B_{\pi}(G)$ and $\varphi_{2,\varepsilon}\in B_{\pi'}(G)$ so that
\[
\norm{\varphi_{1,\varepsilon}}{B_{\pi}}+\norm{\varphi_{2,\varepsilon}}{B_{\pi'}}\leq\norm{\varphi}{B_{\pi}+B_{\pi'}}+\varepsilon.
\]
Then, for $f\in L^1(G)$,
\begin{align*}
\abs{\inner{f}{\varphi}}&\leq\abs{\inner{f}{\varphi_{1,\varepsilon}}}+\abs{\inner{f}{\varphi_{2,\varepsilon}}}\\
&\leq\norm{\varphi_{1,\varepsilon}}{B_{\pi}}\norm{\pi(f)}{}+\norm{\varphi_{2,\varepsilon}}{B_{\pi'}}\norm{\pi'(f)}{}\\
&\leq\left(\norm{\varphi}{B_{\pi}+B_{\pi'}}+\varepsilon\right)\norm{f}{F^*_{\pi}}
\end{align*}
Since $\varepsilon>0$ was arbitrary, it follows that $\inner{\square}{\varphi}$ extends to a functional on $F^*_{\pi}(G)$ with $\norm{\inner{\square}{\varphi}}{}\leq\norm{\varphi}{B_{\pi}+B_{\pi'}}$. Hence, the canonical map $L^{\infty}(G)\rightarrow L^1(G)'$ given by $\varphi\mapsto\inner{\square}{\varphi}$ restricts to a contraction $B_{\pi}(G)+B_{\pi'}(G)\rightarrow F^*_{\pi}(G)'$. Because $\varphi\mapsto\inner{\square}{\varphi}$ and $\zeta\mapsto\varphi_{\zeta}$ are inverse of each other, this finishes the proof.
\end{proof}

Since $B_{\pi}(G)$ embeds into $B_p(G)$, for any $QSL^p$-representation $\pi$, we obtain Corollary \ref{cor:Fstarpi_embeds_into_Bp+Bp'} as an immediate corollary to Theorem \ref{thm:Bpi+Bpiprime_is_dual_of_Fstarpi}.

\begin{cor}\label{cor:Fstarpi_embeds_into_Bp+Bp'}
Let $(\pi,E)$ be an isometric $QSL^p$-representation of the locally compact group $G$, for $1<p<\infty$. Then $F^*_{\pi}(G)'$ embeds contractively into the sumspace $B_p(G)+B_{p'}(G)$.
\end{cor}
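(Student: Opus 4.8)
The corollary is formal once Theorems~\ref{thm:Bpi+Bpiprime_is_dual_of_Fstarpi} and~\ref{thm:Bpi_and_Bp} are available, so the plan is to chain them together. First I would record that for $1<p<\infty$ every $QSL^p$-space is reflexive, being isometrically a quotient of a (closed) subspace of the reflexive space $L^p$. In particular $E$ is reflexive, so Theorem~\ref{thm:Bpi+Bpiprime_is_dual_of_Fstarpi} applies and identifies $F_{\pi}^*(G)'$ canonically and isometrically with the sum space $B_{\pi}(G)+B_{\pi'}(G)$ sitting inside $L^{\infty}(G)$.

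Next I would embed each summand into the corresponding $p$-Fourier--Stieltjes algebra. Since $(\pi,E)$ is a $QSL^p$-representation, Theorem~\ref{thm:Bpi_and_Bp} gives a canonical contraction $B_{\pi}(G)\hookrightarrow B_p(G)$. The only genuinely non-formal point is that the dual representation $(\pi',E')$ is a $QSL^{p'}$-representation: writing $E$ as a quotient of a closed subspace $F\subseteq L^p$, the corresponding adjoint maps exhibit $E'$ as a closed subspace of $F'$ and $F'$ as a quotient of $(L^p)'=L^{p'}$, so $E'$ is a subspace of a quotient of $L^{p'}$; and a subspace $X$ of a quotient $L^{p'}/Z$ is itself isometrically a quotient of the closed subspace $q^{-1}(X)\subseteq L^{p'}$ (with $q$ the quotient map), whence $E'\in QSL^{p'}$. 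A second application of Theorem~\ref{thm:Bpi_and_Bp}, now to $(\pi',E')$, yields a canonical contraction $B_{\pi'}(G)\hookrightarrow B_{p'}(G)$.

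Finally I would pass to sum spaces. All four spaces $B_{\pi}(G),B_{\pi'}(G),B_p(G),B_{p'}(G)$ embed canonically and contractively into $L^{\infty}(G)$, so the two inclusions just produced are compatible there; for any $\varphi=\varphi_0+\varphi_1$ with $\varphi_0\in B_{\pi}(G)$ and $\varphi_1\in B_{\pi'}(G)$ one has $\norm{\varphi}{B_p+B_{p'}}\leq\norm{\varphi_0}{B_p}+\norm{\varphi_1}{B_{p'}}\leq\norm{\varphi_0}{B_{\pi}}+\norm{\varphi_1}{B_{\pi'}}$, and taking the infimum over all such decompositions gives a canonical contraction $B_{\pi}(G)+B_{\pi'}(G)\hookrightarrow B_p(G)+B_{p'}(G)$. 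Composing this with the isometric identification of the first step produces the asserted contractive embedding $F_{\pi}^*(G)'\hookrightarrow B_p(G)+B_{p'}(G)$. I expect the duality statement $E\in QSL^p\Rightarrow E'\in QSL^{p'}$ to be the main (and essentially the only) obstacle; everything else is bookkeeping with the universal properties already in place.
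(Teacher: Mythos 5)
Your proposal is correct and follows exactly the route the paper takes: the paper derives the corollary in one line by combining Theorem~\ref{thm:Bpi+Bpiprime_is_dual_of_Fstarpi} with the contractive embedding $B_{\pi}(G)\hookrightarrow B_p(G)$ from Theorem~\ref{thm:Bpi_and_Bp}. You merely supply the details the paper leaves implicit (reflexivity of $QSL^p$-spaces and the duality $E\in QSL^p\Rightarrow E'\in QSL^{p'}$), and your verification of the latter via $SQ=QS$ is accurate.
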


\section{Applications to amenability}\label{sec:amenability}
In this section, we apply the understanding obtained in Section \ref{sec:dual_of_Fstar} of the Banach space dual of a symmetrized pseudofunction algebra in order to prove Theorem \ref{introthm:amenability_and_Fstarp} from the introduction. This theorem, which we restate as Theorem \ref{thm:amenability_and_Fstarp} below, is parallel to, and builds upon, analogous characterizations of amenability in terms of the $p$-pseudofunction algebra of Herz. We shall give an overview of these classical results as we need them.
\begin{thm}\label{thm:amenability_and_Fstarp}
Let $G$ be a locally compact group and let $1<p,p'<\infty$ be Hölder conjugates. The following are equivalent:
\begin{enumerate}[(i)]
\item $G$ is amenable,
\item $F^*_{\lambda_p}(G)'$ is canonically isometrically isomorphic to the sum space $B_p(G)+B_{p'}(G)$,
\item The canonical map $F^*_{L^p}(G)\rightarrow F^*_{\lambda_p}(G)$ is an isometric isomorphism,
\item The canonical map $F^*_{L^p}(G)\rightarrow F^*_{\lambda_p}(G)$ is an isomorphism,
\item The trivial representation $1_G$ extends to a $^*$-representation of $F^*_{\lambda_p}(G)$.
\end{enumerate}
\end{thm}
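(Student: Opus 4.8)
The strategy is to prove the cycle of implications
\[
\text{(i)}\Rightarrow\text{(ii)}\Rightarrow\text{(iii)}\Rightarrow\text{(iv)}\Rightarrow\text{(v)}\Rightarrow\text{(i)},
\]
leaning on the dual-space description from Section~\ref{sec:dual_of_Fstar} for the top half of the cycle and on the classical theory of Herz's $p$-pseudofunction algebra for the bottom half. The implications $\text{(iii)}\Rightarrow\text{(iv)}$ and $\text{(v)}\Rightarrow\text{(i)}$ are expected to be short: the former is trivial, and the latter is precisely Proposition~3.1 of \cite{SameiWiersma2018Exotic}, which states that $G$ is amenable whenever $1_G$ extends to a $^*$-representation of $F^*_{\lambda_p}(G)$ (one can also see it directly, since an extension of $1_G$ to $F^*_{\lambda_p}(G)$ in particular restricts through $F^*_{\lambda_p}(G)\to F^*_{\lambda_{p}}(G)$ as a character whose existence forces a left-invariant mean via the standard argument on $\lambda_p$ weak containment of $1_G$).

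For $\text{(i)}\Rightarrow\text{(ii)}$: when $G$ is amenable, the left-regular representation $\lambda_p$ on $L^p(G)$ is $p$-universal, i.e.\ it contains every isometric $QSL^p$-representation of $G$. This is the classical fact (going back to Herz, and spelled out by Cowling \cite{Cowling1979Littlewool-Paley} and Runde \cite{Runde2004repQSLP}) that for amenable $G$ the $p$-pseudofunction norm agrees with the $QSL^p$-universal norm; equivalently $1_G$ is contained in $\lambda_p$ so all $QSL^p$-representations are weakly contained in $\lambda_p$, and then a standard argument upgrades weak containment to actual containment on a suitably inflated space (Theorem~\ref{thm:Bpi_as_space_of_matrix_coeff} lets us stay within $QSL^p$). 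Granting that $\lambda_p$ is $p$-universal, $\lambda_{p'}$ is $p'$-universal by the same reasoning (or by taking duals), so Theorem~\ref{thm:Bpi_and_Bp} gives $B_{\lambda_p}(G)=B_p(G)$ and $B_{\lambda_{p'}}(G)=B_{p'}(G)$ isometrically. Since $\lambda_p$ acts on the reflexive space $L^p(G)$ with dual representation $\lambda_{p'}$ on $L^{p'}(G)$, Theorem~\ref{thm:Bpi+Bpiprime_is_dual_of_Fstarpi} identifies $F^*_{\lambda_p}(G)'$ isometrically with $B_{\lambda_p}(G)+B_{\lambda_{p'}}(G)=B_p(G)+B_{p'}(G)$, which is (ii).

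For $\text{(ii)}\Rightarrow\text{(iii)}$: the canonical contraction $F^*_{L^p}(G)\to F^*_{\lambda_p}(G)$ has dense range, so its adjoint is an isometric embedding $F^*_{\lambda_p}(G)'\hookrightarrow F^*_{L^p}(G)'$; by Theorem~\ref{thm:Bpi+Bpiprime_is_dual_of_Fstarpi} applied to the $p$-universal representation defining $F^*_{L^p}(G)$ (or by Corollary~\ref{cor:Fstarpi_embeds_into_Bp+Bp'} together with $F^*_{L^p}(G)' = B_p(G)+B_{p'}(G)$), the dual of $F^*_{L^p}(G)$ is already $B_p(G)+B_{p'}(G)$ isometrically. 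Hypothesis (ii) then says the embedding of duals is surjective and isometric, hence the original map $F^*_{L^p}(G)\to F^*_{\lambda_p}(G)$ is an isometric isomorphism (a contraction with dense range whose adjoint is an isometric isomorphism is itself an isometric isomorphism). Finally, for $\text{(iv)}\Rightarrow\text{(v)}$: if $F^*_{L^p}(G)\to F^*_{\lambda_p}(G)$ is an isomorphism, then since $1_G$ is an isometric $QSL^p$-representation (it acts on $\C$), it factors through $F^*_{L^p}(G)$ by $\mathcal{E}$-universality, hence through $F^*_{\lambda_p}(G)$ via the inverse of the isomorphism; one checks this extension is a $^*$-homomorphism because the involution on all these algebras comes from $L^1(G)$ and $1_G$ respects it.

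\textbf{Main obstacle.} The crux is the implication $\text{(i)}\Rightarrow\text{(ii)}$, and within it the assertion that amenability makes $\lambda_p$ (and $\lambda_{p'}$) $p$-universal in the strong sense of \emph{containing}, not merely weakly containing, every $QSL^p$-representation. Weak containment of $1_G$ in $\lambda_p$ for amenable $G$ is classical, and weak containment of an arbitrary $QSL^p$-representation $\pi$ in $\lambda_p$ follows by tensoring-type arguments, but passing from $\|\pi(f)\|\le\|\lambda_p(f)\|$ for all $f$ to an actual isometric intertwiner $E\to L^p(G)$ requires care: one must realize $\pi$ on a space built from $\lambda_p$, and here Theorem~\ref{thm:Bpi_as_space_of_matrix_coeff} (which keeps us inside the $QSL^p$ class) plus a direct-sum/ultrapower construction is the natural tool. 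I expect this to be the technically delicate step; the rest of the cycle is essentially formal duality once the $p$-universality of $\lambda_p$ is in hand.
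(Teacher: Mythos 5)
Your cycle of implications is the one the paper uses, and the lower half is fine: (iii)$\Rightarrow$(iv) is trivial, (iv)$\Rightarrow$(v) via universality of $F^*_{L^p}(G)$ matches the paper, and (v)$\Rightarrow$(i) is legitimately discharged by citing Proposition 3.1 of Samei--Wiersma (the paper reproduces that proof via Kesten's theorem and an interpolation-of-norms lemma, but attributes it to the same source). The top half, however, has two problems.

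In (ii)$\Rightarrow$(iii) you assert that $F^*_{L^p}(G)'=B_p(G)+B_{p'}(G)$ isometrically, ``by Theorem \ref{thm:Bpi+Bpiprime_is_dual_of_Fstarpi} applied to the $p$-universal representation defining $F^*_{L^p}(G)$.'' This is a genuine gap: $F^*_{L^p}(G)$ is built from representations on $L^p$-spaces, whereas $B_p(G)$ consists of coefficients of $QSL^p$-representations, and a universal $L^p$-representation is not known to dominate all $QSL^p$-representations; Theorem \ref{thm:Bpi_and_Bp} therefore does not give $B_{\pi}(G)=B_p(G)$ for such a $\pi$, and your identification would essentially require the $L^p$- and $QSL^p$-universal norms to coincide, which is established nowhere. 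The paper's route avoids this: Theorem \ref{thm:Bpi+Bpiprime_is_dual_of_Fstarpi}, applied to a genuinely $p$-universal ($QSL^p$) representation, identifies $B_p(G)+B_{p'}(G)$ with $F^*_{QSL^p}(G)'$; under (ii) the adjoint of $F^*_{QSL^p}(G)\rightarrow F^*_{\lambda_p}(G)$ is an isometric isomorphism, hence so is that map by Lemma 3.6 of Gardella--Thiel, and since $F^*_{L^p}(G)$ is sandwiched between $F^*_{QSL^p}(G)$ and $F^*_{\lambda_p}(G)$, the intermediate map is an isometric isomorphism as well. (A smaller slip in the same step: the adjoint of a contraction with dense range is a contractive injection, not automatically an isometric embedding.)

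In (i)$\Rightarrow$(ii) you make actual containment of every $QSL^p$-representation in an inflation of $\lambda_p$ the ``crux'' and leave it unresolved. This is a false obstacle. The isometric identity $B_{\lambda_p}(G)=B_p(G)$ needs only the norm domination $\norm{\pi(f)}{}\leq\norm{\lambda_p(f)}{}$ for all $QSL^p$-representations $\pi$ of an amenable $G$: a coefficient $\varphi=\inner{\pi(\square)\xi}{\eta}$ then satisfies $\abs{\varphi(f)}\leq\norm{\lambda_p(f)}{}\norm{\xi}{}\norm{\eta}{}$, so $\norm{\varphi}{B_{\lambda_p}}\leq\norm{\varphi}{B_p}$, and the reverse contraction is the first half of Theorem \ref{thm:Bpi_and_Bp}. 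That norm domination is exactly the Cowling/Runde/Neufang--Runde result the paper quotes as Theorem \ref{thm:amenability_Fp_Cowling_Runde_Neufang} (equivalently, $F_{\lambda_p}(G)'\cong B_p(G)$ canonically and isometrically when $G$ is amenable); with it in hand, Theorem \ref{thm:Bpi+Bpiprime_is_dual_of_Fstarpi} with $\pi=\lambda_p$ yields (ii) in one line, which is how the paper argues.
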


The conditions (ii) and (v) in Theorem \ref{thm:amenability_and_Fstarp} describes properties of the Banach space dual of $F^*_{\lambda_p}(G)$. In the non-symmetrized setting, a connection between amenability of the group $G$ and similar properties of the Banach space dual of the $p$-pseudofunction algebra was first established by Cowling in Theorem 5 in \cite{Cowling1979Littlewool-Paley} and later refined by Runde in Theorem 6.7 in \cite{Runde2004repQSLP} and Neufang and Runde in Theorem 4.1 in \cite{NeufangRunde2007OpSpacesOverQSLp}.
\begin{thm}[Cowling \cite{Cowling1979Littlewool-Paley}, Runde \cite{Runde2004repQSLP}, Neufang \& Runde \cite{NeufangRunde2007OpSpacesOverQSLp}]\label{thm:amenability_Fp_Cowling_Runde_Neufang}
Let $G$ be a locally compact group. The following are equivalent:
\begin{enumerate}[(i)]
\item $G$ is amenable,
\item $F_{\lambda_p}(G)'$ is canonically isometrically isomorphic to $B_p(G)$,
\item The trivial representation $1_G$ extends to a representation of $F_{\lambda_p}(G)$.
\end{enumerate}
\end{thm}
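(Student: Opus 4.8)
The plan is to prove the cycle (ii)$\Rightarrow$(iii)$\Rightarrow$(i)$\Rightarrow$(ii) after translating (ii) and (iii) into concrete statements about Fourier--Stieltjes spaces. Since $L^p(G)$ is an $L^p$-space, $\lambda_p$ is an isometric $QSL^p$-representation, so Proposition \ref{prop:Bpi_is_dual_of_Fpi} identifies $F_{\lambda_p}(G)'$ isometrically with $B_{\lambda_p}(G)$, while Theorem \ref{thm:Bpi_and_Bp} supplies a canonical contractive embedding $B_{\lambda_p}(G)\hookrightarrow B_p(G)$. Composing these, the canonical map in (ii) is exactly this embedding, so (ii) amounts to the assertion that $B_{\lambda_p}(G)=B_p(G)$ with equal norms. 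The integrated form of $1_G$ is the augmentation character $f\mapsto\inner{f}{\mathbf{1}}=\integral{f(s)}{G}{\mu_G(s)}$, where $\mathbf{1}$ is the constant function; via $F_{\lambda_p}(G)'=B_{\lambda_p}(G)$, condition (iii) is thus equivalent to $\mathbf{1}\in B_{\lambda_p}(G)$, multiplicativity of the functional being automatic from $\integral{(f*g)(s)}{G}{\mu_G(s)}=\bigl(\integral{f}{G}{\mu_G}\bigr)\bigl(\integral{g}{G}{\mu_G}\bigr)$.

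With these reductions, (ii)$\Rightarrow$(iii) is immediate: $\mathbf{1}=\inner{1_G(\square)1}{1}$ is a matrix coefficient of the trivial representation on $\C$, an $L^p$-space, so $\mathbf{1}\in B_p(G)$, and under (ii) this places $\mathbf{1}$ in $B_{\lambda_p}(G)$. For (iii)$\Rightarrow$(i) I would first upgrade the qualitative membership to a sharp norm identity. Membership $\mathbf{1}\in B_{\lambda_p}(G)$ yields a constant $C$ with $\abs{\integral{f(s)}{G}{\mu_G(s)}}\leq C\norm{\lambda_p(f)}{}$ for all $f\in L^1(G)$; restricting to $f\geq0$ and using $\integral{f}{G}{\mu_G}=\norm{f}{1}$ gives $\norm{f}{1}\leq C\norm{\lambda_p(f)}{}$. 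Applying this to the convolution powers $f^{*n}$ of a probability density $f$, together with $\lambda_p(f^{*n})=\lambda_p(f)^n$ and submultiplicativity of the operator norm, forces $\norm{\lambda_p(f)}{}=\norm{f}{1}$ for every $f\geq0$ --- the sharp Kesten--Hulanicki--Reiter condition, which is equivalent to amenability of $G$ (see \cite{Pier1984AmenableLCGP}, \cite{DerighettiConvolution}).

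The substantive implication is (i)$\Rightarrow$(ii), which I expect to be the main obstacle. As $B_{\lambda_p}(G)\hookrightarrow B_p(G)$ is already contractive, it suffices to show that under amenability every $\varphi\in B_p(G)$ lies in $B_{\lambda_p}(G)$ with $\norm{\varphi}{B_{\lambda_p}}\leq\norm{\varphi}{B_p}$. Writing $\varphi=\inner{\pi(\square)\xi}{\eta}$ for an isometric $QSL^p$-representation $(\pi,E)$, this reduces to the weak-containment estimate $\norm{\pi(f)}{}\leq\norm{\lambda_p(f)}{}$ for all $f\in L^1(G)$ and every such $\pi$ --- the $L^p$-analog of the fact that on an amenable group every representation is weakly contained in the regular one. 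I would prove it by an averaging argument built on Reiter's property: amenability furnishes a net $(\xi_i)$ of unit vectors in $L^p(G)$ that is asymptotically translation invariant uniformly on compacta, and twisting $\pi$ against this net (an $L^p$-Fell-absorption computation) compares $\norm{\pi(f)}{}$ with the convolution norm $\norm{\lambda_p(f)}{}$. Granting the estimate, taking the infimum over all representations $\varphi=\inner{\pi(\square)\xi}{\eta}$ yields $\norm{\varphi}{B_{\lambda_p}}\leq\norm{\varphi}{B_p}$, so the embedding in (ii) is a surjective isometry.

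The delicate point throughout is the interaction of amenability with weak containment for representations on $QSL^p$-spaces: the Hilbert-space Fell absorption principle has no verbatim $L^p$-counterpart, so the averaging must be carried out concretely with the isometries of Corollary \ref{cor:isom_reps_on_Lp} and Banach-space tensor norms adapted to $QSL^p$. Reducing an abstract matrix coefficient to a genuine one via Theorem \ref{thm:Bpi_as_space_of_matrix_coeff} keeps this bookkeeping manageable.
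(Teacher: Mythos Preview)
The paper does not supply its own proof of this theorem: it is stated with attribution to Cowling, Runde, and Neufang--Runde and used as an input to the proof of Theorem~\ref{thm:amenability_and_Fstarp}. There is therefore nothing in the paper to compare your argument against directly.

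That said, your outline is the standard one and is essentially correct. The reductions via Proposition~\ref{prop:Bpi_is_dual_of_Fpi} and Theorem~\ref{thm:Bpi_and_Bp} are exactly right, and your proofs of (ii)$\Rightarrow$(iii) and (iii)$\Rightarrow$(i) are complete (the convolution-power trick to pass from a bounded extension of $1_G$ to $\norm{\lambda_p(f)}{}=\norm{f}{1}$ for $f\geq0$ is the right move, and the latter condition is indeed the $L^p$-form of the Kesten--Hulanicki--Reiter criterion). The substantive content sits entirely in (i)$\Rightarrow$(ii), where what is needed is the Herz majorization principle $\norm{\pi(f)}{}\leq\norm{\lambda_p(f)}{}$ for every isometric $QSL^p$-representation $\pi$ of an amenable $G$. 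You identify this correctly and sketch the Reiter-net/averaging approach, but you should be aware that this is precisely the theorem of Herz (later refined by Cowling and Runde) being cited, and carrying it out rigorously for $QSL^p$-spaces---not just $L^p$-spaces, so Corollary~\ref{cor:isom_reps_on_Lp} alone is not enough---requires the $p$-operator-space machinery or the tensor-product arguments in \cite{Runde2004repQSLP}. Your sketch is accurate in spirit but stops short of that.
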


We build upon this work in the non-symmetrized setting and apply Theorem \ref{thm:Bpi+Bpiprime_is_dual_of_Fstarpi} to proof the first implication in Theorem \ref{thm:amenability_and_Fstarp}.
\begin{proof}[Proof of Theorem \ref{thm:amenability_and_Fstarp} (i)$\Rightarrow$(ii)]
Assume $G$ is amenable. Then, by Theorem \ref{thm:amenability_Fp_Cowling_Runde_Neufang}, $F_{\lambda_q}(G)'$ is canonically isometrically isomorphic to $B_{q}(G)$, for each $1<q<\infty$. Together with Theorem \ref{thm:Bpi+Bpiprime_is_dual_of_Fstarpi} with $\lambda_p$ in place of $\pi$, this implies that
\[
F^*_{\lambda_p}(G)'\cong F_{\lambda_p}(G)'+F_{\lambda_{p'}}(G)'\cong B_p(G)+B_{p'}(G),
\]
where both isomorphisms are canonical and isometric.
\end{proof}

In $C^*$-algebraic terms, amenability of $G$ is the property that its universal and reduced group $C^*$-algebras coincide canonically. In Theorem 3.7 in \cite{GardellaThiel2014GpAlgActingOnLp}, Gardella and Thiel give an $L^p$-generalization of this. Here, the roles of the universal and reduced $C^*$-algebras are played by the $L^p$-, respectively, $p$-pseudofunction algebras.
\begin{thm}[Gardella \& Thiel \cite{GardellaThiel2014GpAlgActingOnLp}]\label{thm:amenability_Gardella-Thiel}
Let $G$ be a locally compact group and let $1<p<\infty$. The following are equivalent:
\begin{enumerate}[(i)]
\item $G$ is amenable,
\item The canonical map $F_{L^p}(G)\rightarrow F_{\lambda_p}(G)$ is an isometric isomorphism,
\item The canonical map $F_{L^p}(G)\rightarrow F_{\lambda_p}(G)$ is an isomorphism,
\end{enumerate}
\end{thm}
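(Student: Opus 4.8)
The plan is to prove the cycle $\mathrm{(i)}\Rightarrow\mathrm{(ii)}\Rightarrow\mathrm{(iii)}\Rightarrow\mathrm{(i)}$, where $\mathrm{(ii)}\Rightarrow\mathrm{(iii)}$ is immediate. For $\mathrm{(iii)}\Rightarrow\mathrm{(i)}$ I would use that the trivial representation $1_G$ is an isometric representation of $G$ on $\C$, which is an $L^p$-space: hence $\abs{\int_G f}\leq\norm{f}{F_{L^p}}$ for $f\in L^1(G)$, so $f\mapsto\int_G f$ extends to a contractive algebra homomorphism $F_{L^p}(G)\to\C$. Composing this with the inverse of the isomorphism in $\mathrm{(iii)}$ — which is induced by $\id_{L^1(G)}$, hence restricts to the identity on $L^1(G)$, and whose inverse is again a bounded algebra homomorphism — gives a bounded algebra homomorphism $F_{\lambda_p}(G)\to\C$ restricting to $1_G$ on $L^1(G)$. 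Thus the trivial representation extends to a representation of $F_{\lambda_p}(G)$, and $G$ is amenable by Theorem~\ref{thm:amenability_Fp_Cowling_Runde_Neufang}.

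For $\mathrm{(i)}\Rightarrow\mathrm{(ii)}$ it suffices to show that $\norm{f}{F_{L^p}}=\norm{\lambda_p(f)}{B(L^p(G))}$ for every $f\in L^1(G)$: the canonical map $F_{L^p}(G)\to F_{\lambda_p}(G)$ is then an isometry with dense range between Banach spaces, and therefore onto. Since $\lambda_p$ is itself an isometric $L^p$-representation, the inequality $\norm{\lambda_p(f)}{}\leq\norm{f}{F_{L^p}}$ is automatic, so what must be shown is $\norm{\pi(f)}{}\leq\norm{\lambda_p(f)}{}$ for an arbitrary isometric representation $(\pi,L^p(\Omega,\mu))$. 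For this I would invoke an $L^p$-analogue of Fell's absorption principle: the map $W\xi(\omega,s)=[\pi(s^{-1})\xi(\cdot,s)](\omega)$ is a surjective isometry of $L^p(\Omega\times G)=L^p(\Omega;L^p(G))$ conjugating the (isometric) tensor-product representation $\pi\otimes\lambda_p$ into $\id_{L^p(\Omega)}\otimes\lambda_p$, whence $\norm{(\pi\otimes\lambda_p)(f)}{}=\norm{\lambda_p(f)}{}$ for all $f\in L^1(G)$. On the other hand, amenability produces via a Følner net norm-one vectors $\xi_U=\mu_G(U)^{-1/p}\chi_U\in L^p(G)$ with $\sup_{s\in K}\norm{\lambda_p(s)\xi_U-\xi_U}{}\to 0$ for each compact $K\subseteq G$, and for $f\in C_c(G)$ and a unit vector $\eta\in L^p(\Omega)$ the estimate $\norm{(\pi\otimes\lambda_p)(f)(\eta\otimes\xi_U)-(\pi(f)\eta)\otimes\xi_U}{}\leq\norm{f}{1}\sup_{s\in\supp(f)}\norm{\lambda_p(s)\xi_U-\xi_U}{}$ shows that $\norm{(\pi\otimes\lambda_p)(f)}{}\geq\norm{\pi(f)}{}$. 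Combining the two displays gives $\norm{\pi(f)}{}\leq\norm{\lambda_p(f)}{}$, first for $f\in C_c(G)$ and then, by density, for all $f\in L^1(G)$.

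The hard part is the Fell-absorption step in $\mathrm{(i)}\Rightarrow\mathrm{(ii)}$: checking that $\pi\otimes\lambda_p$ is a well-defined isometric representation on $L^p(\Omega\times G)$, that $W$ is a surjective isometry with $W(\pi\otimes\lambda_p)(\cdot)W^{-1}=\id_{L^p(\Omega)}\otimes\lambda_p$, and — the one place where amenability genuinely enters — that a Følner net furnishes the almost-invariant vectors $\xi_U$. When $G$ is non-unimodular these computations carry the usual modular-function factors, but this is a routine complication; and $\mathrm{(iii)}\Rightarrow\mathrm{(i)}$ is, beyond elementary bookkeeping, just an appeal to the cited Theorem~\ref{thm:amenability_Fp_Cowling_Runde_Neufang}.
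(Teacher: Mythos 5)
This theorem is not proved in the paper at all: it is quoted verbatim from Gardella--Thiel (their Theorem 3.7) and used as a known input, so there is no in-paper argument to measure you against. Your proposal is, as far as I can see, correct, and it is essentially the standard proof of the Gardella--Thiel result: the whole content of (i)$\Rightarrow$(ii) is the inequality $\norm{\pi(f)}{}\leq\norm{\lambda_p(f)}{}$ for every isometric $L^p$-representation $\pi$ of an amenable $G$, and your Fell-absorption-plus-F\o lner argument is the right way to get it (the conjugation computation $W(\pi\otimes\lambda_p)(t)W^{-1}=\id\otimes\lambda_p(t)$ checks out, and $\norm{\id\otimes\lambda_p(f)}{}=\norm{\lambda_p(f)}{}$ because $\id_{L^p(\Omega)}\otimes T$ acts fiberwise on $L^p(\Omega;L^p(G))$). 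It is worth noting that this route is genuinely different from the one the paper uses for the symmetrized analogue (the equivalence of (i), (iii), (iv) in Theorem \ref{thm:amenability_and_Fstarp}): there the forward direction passes through the duality identification $F^*_{\lambda_p}(G)'\cong B_p(G)+B_{p'}(G)$ and Lemma 3.6 of \cite{GardellaThiel2014GpAlgActingOnLp}, and the converse through Kesten's theorem and a Riesz--Thorin interpolation bound. Your approach buys a self-contained, representation-theoretic proof of the hard inequality, at the cost of the tensor-product technicalities; the duality approach buys a softer argument but needs the $B_p$ machinery. Three small points to tighten: (a) your (iii)$\Rightarrow$(i) leans entirely on Theorem \ref{thm:amenability_Fp_Cowling_Runde_Neufang}, which is itself only cited -- that is legitimate, but be aware that the real content of ``trivial character extends $\Rightarrow$ amenable'' lives there (the paper's symmetrized version of this step is the Kesten/interpolation argument, which you could adapt if you wanted independence from Theorem \ref{thm:amenability_Fp_Cowling_Runde_Neufang}); (b) no modular-function factors actually arise, since $\lambda_p$ is taken with respect to left Haar measure, so left translation is already measure-preserving; (c) the two genuine technical obligations you should discharge explicitly are the joint measurability of $(\omega,s)\mapsto[\pi(s^{-1})\xi(\cdot,s)](\omega)$ (via simple tensors and strong continuity) and the fact that $S\otimes T$ extends boundedly to $L^p(\Omega\times G)$ with norm $\norm{S}{}\norm{T}{}$, which is where the $L^p$-structure, as opposed to a general Banach space, is used.
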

In our Theorem \ref{thm:amenability_and_Fstarp}, the equivalence of amenability and the conditions (iii) and (iv) is parallel to this result of Gardella and Thiel, and our proof of the implications (ii)$\Rightarrow$(iii) and (iv)$\Rightarrow$(v) is an adaptation of theirs. The implication (iii)$\Rightarrow$(iv) in Theorem \ref{thm:amenability_and_Fstarp} is trivial.

\begin{proof}[Proof of Theorem \ref{thm:amenability_and_Fstarp} (ii)$\Rightarrow$(iii)]
By Theorem \ref{thm:Bpi+Bpiprime_is_dual_of_Fstarpi}, the sumspace $B_p(G)+B_{p'}(G)$ can be identified canonically with the dual of $F^*_{QSL^p}(G)$. Now the canonical inclusion of $F^*_{\lambda_p}(G)'$ into the sum space $B_p(G)+B_{p'}(G)$ is the Banach space adjoint of the canonical linear contraction from $F^*_{QSL^p}(G)$ to $F^*_{\lambda_p}(G)$. Thus, it follows from Lemma 3.6 in \cite{GardellaThiel2014GpAlgActingOnLp} that if $F^*_{\lambda_p}(G)'\hookrightarrow B_p(G)+B_{p'}(G)$ is an isometric isomorphism then so is $F^*_{QSL^p}(G)\rightarrow F^*_{\lambda_p}(G)$. Since $F^*_{L^p}(G)$ is intermediate to $F^*_{QSL^p}(G)$ and $F^*_{\lambda_p}(G)$, we see that (iii) follows from (ii).
\end{proof}

\begin{proof}[Proof of Theorem \ref{thm:amenability_and_Fstarp} (iv)$\Rightarrow$(v)]
The trivial representation $1_G$ is an $L^p$-representation. Thus, it extends to a $^*$-representation of $F^*_{L^p}(G)$. Therefore, if the canonical map $F^*_{L^p}(G)\rightarrow F^*_{\lambda_p}(G)$ is an isomorphism, $1_G$ extends to a $^*$-representation of $F^*_{\lambda_p}(G)$.
\end{proof}

Due to work of Hulanicki in \cite{Hulanicki1966MeansFolner} and Reiter in \cite{Reiter1965OnSomeProperties}, amenability of $G$ is characterized as the property that the trivial representation $1_G$ is weakly contained in the left regular representation $\lambda_2$ of $G$ on $L^2(G)$, or, equivalently, that $1_G$ extends to a $^*$-representation of $C^*_r(G)$. The characterization of amenability in Theorem \ref{thm:amenability_Fp_Cowling_Runde_Neufang}(iii) should be understood as an $L^p$-version of the characterization by Hulanicki and Reiter. Indeed, for $p=2$, the $2$-pseudofunction algebra $F_{\lambda_2}(G)$ is nothing but the reduced group $C^*$-algebra of $G$. In the same way, Theorem \ref{thm:amenability_and_Fstarp}(v) is another $L^p$-version of this characterization but this time in the symmetrized setting. The proof of (v)$\Rightarrow$(i) in Theorem \ref{thm:amenability_and_Fstarp} is due to work of Samei \& Wiersma in Proposition 3.1 in \cite{SameiWiersma2018Exotic}. We shall include their proof for the sake of completeness and in order to add to it a few points of clarification. The proof relies on a theorem due to Kesten in \cite{Kesten1959FullBanachMeanVal} (see also Theorem G.4.4 in \cite{BekkaDeLaHarpeValette})

\begin{thm}[Kesten \cite{Kesten1959FullBanachMeanVal}]\label{thm:Kesten}
Let $G$ be a locally compact group and let $\pi$ be a unitary representation of $G$.
\begin{enumerate}[(i)]
\item If $1_G\prec\pi$ then $\norm{\pi(f)}{}=1$, for all $f\in L^1(G)$ with $f\geq0$ and $\norm{f}{1}=1$.
\item If there exists $f\in L^1(G)$ with $f\geq0$, $\norm{f}{1}=1$ and $\norm{\pi(f)}{}=1$, and such that $\supp(f^*\ast f)$ generates a dense subgroup of $G$, then $1_G\prec\pi$.
\end{enumerate}
\end{thm}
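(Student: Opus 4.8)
The plan is to prove the two implications separately. The first is a routine estimate with approximately invariant vectors; the second rests on a weak$^*$-compactness argument in $L^{\infty}(G)$ combined with the structure theory of functions of positive type.

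For (i), recall that $1_G\prec\pi$ means that for every $\varepsilon>0$ and every compact $K\subseteq G$ there is a unit vector $\xi$ in the space of $\pi$ with $\sup_{s\in K}\norm{\pi(s)\xi-\xi}{}<\varepsilon$. Given $f\geq0$ with $\norm{f}{1}=1$, I would pick a compact set $K$ with $\integral{f}{G\setminus K}{\mu_G}<\varepsilon$ and a corresponding $\varepsilon$-approximately invariant unit vector $\xi$. Since $\integral{f}{G}{\mu_G}=1$, one has $\pi(f)\xi-\xi=\integral{f(s)(\pi(s)\xi-\xi)}{G}{\mu_G(s)}$, and splitting this integral over $K$ and $G\setminus K$ gives $\norm{\pi(f)\xi-\xi}{}\leq\varepsilon+2\varepsilon$. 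Hence $\norm{\pi(f)}{}\geq1-3\varepsilon$ for every $\varepsilon>0$, and together with the trivial bound $\norm{\pi(f)}{}\leq\norm{f}{1}=1$ this forces $\norm{\pi(f)}{}=1$. No condition on supports enters here.

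For (ii), set $g=f^*\ast f$. Then $g\geq0$ pointwise, $g=g^*$, $\norm{g}{1}=1$, and $\pi(g)=\pi(f)^*\pi(f)$ is a positive contraction with $\norm{\pi(g)}{}=\norm{\pi(f)}{}^2=1$; moreover each convolution power $g^{\ast k}$, $k\geq1$, is again nonnegative, self-adjoint, of $L^1$-norm $1$, with $\pi(g^{\ast k})=\pi(g)^k$. Since $0\leq\pi(g)\leq I$ has norm $1$, there are unit vectors $\xi_n$ with $\inner{\pi(g)\xi_n}{\xi_n}\to1$, and the elementary inequality $\pi(g)^2\leq\pi(g)$ yields $\norm{\pi(g)\xi_n-\xi_n}{}^2\leq1-\inner{\pi(g)\xi_n}{\xi_n}\to0$; hence $\inner{\pi(g)^k\xi_n}{\xi_n}\to1$ for every fixed $k$. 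Now let $\varphi_n\colon G\to\C$ be the function $\varphi_n(t)=\inner{\pi(t)\xi_n}{\xi_n}$, a normalized continuous function of positive type with $\norm{\varphi_n}{\infty}=1$, and let $\varphi\in L^{\infty}(G)$ be a weak$^*$-cluster point of $(\varphi_n)$, realized along a subnet. Then $\varphi$ is of positive type, $\norm{\varphi}{\infty}\leq1$, and, since $g^{\ast k}\in L^1(G)$ and $\inner{\pi(g)^k\xi_n}{\xi_n}\to1$, the pairing of $L^1(G)$ with $L^{\infty}(G)$ gives $\inner{g^{\ast k}}{\varphi}=1$ for all $k\geq1$.

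It remains to deduce $\varphi\equiv1$. From $g^{\ast k}\geq0$, $\norm{g^{\ast k}}{1}=1$, $\norm{\varphi}{\infty}\leq1$ and $\inner{g^{\ast k}}{\varphi}=1$ (a real number) it follows that $\varphi=1$ almost everywhere on $\{g^{\ast k}>0\}$. Passing to the continuous representative of $\varphi$ --- a bounded measurable function of positive type agrees locally almost everywhere with a continuous one --- continuity improves this to $\varphi\equiv1$ on $\supp(g^{\ast k})$, and by the identity $\supp(\mu\ast\nu)=\overline{\supp(\mu)\,\supp(\nu)}$ for Radon measures we have $\supp(g^{\ast k})=\overline{(\supp g)^{k}}$. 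Because $g=g^*$, the set $\supp(g)=\supp(f^*\ast f)$ is symmetric, so $\bigcup_{k\geq1}(\supp g)^{k}$ is precisely the subgroup of $G$ generated by $\supp(f^*\ast f)$, which is dense in $G$ by hypothesis; hence the continuous function $\varphi$ equals $1$ on a dense subset of $G$, whence $\varphi\equiv1$. Finally, Raikov's theorem --- that on the set of normalized functions of positive type the weak$^*$-topology coincides with uniform convergence on compacta --- shows that $\varphi_n\to1$ uniformly on compacta along the chosen subnet, which is exactly the statement $1_G\prec\pi$. I expect the main obstacle to be this last paragraph: one has to be careful with the measure-theoretic bookkeeping for the supports of the convolution powers, with the passage to the continuous representative of $\varphi$, and with the two standard but non-trivial facts invoked (essential continuity of bounded functions of positive type, and Raikov's theorem); the remainder is direct computation with $\pi(g)$ and routine estimates.
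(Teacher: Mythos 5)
Your argument is correct. Note that the paper does not prove this theorem at all --- it is imported from Kesten's paper with a pointer to Theorem G.4.4 in Bekka--de la Harpe--Valette --- and your proof is essentially the standard one given there: almost-invariant vectors for (i), and for (ii) the passage to $g=f^*\ast f$, the weak$^*$-cluster point of the positive-type functions $\langle\pi(\cdot)\xi_n,\xi_n\rangle$, the support/density argument forcing $\varphi\equiv 1$, and Raikov's theorem to upgrade weak$^*$-convergence to uniform convergence on compacta. All the steps you flag as delicate (equality a.e.\ on $\{g^{\ast k}>0\}$ from $\int g^{\ast k}\varphi=1$, the continuous representative, $\supp(g^{\ast k})=\overline{(\supp g)^k}$ for nonnegative $g$) check out.
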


When $G$ is $\sigma$-finite, we get the following characterization of amenability as a corollary to Kesten's theorem. We assume this is well known to experts.
\begin{cor}\label{cor:KestenHulanickiReiter}
A locally compact group $G$ is amenable if and only if $\norm{\lambda_2(f)}{}=1$, for all $f\in L^1(G)$ with $f\geq0$ and $\norm{f}{1}=1$.
\end{cor}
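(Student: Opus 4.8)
The plan is to obtain both implications from Kesten's theorem (Theorem~\ref{thm:Kesten}) together with the Hulanicki--Reiter characterization of amenability as the condition $1_G\prec\lambda_2$ recalled just above. The forward implication is immediate: if $G$ is amenable then $1_G\prec\lambda_2$, so Theorem~\ref{thm:Kesten}(i) applied to $\pi=\lambda_2$ gives $\norm{\lambda_2(f)}{}=1$ for every $f\in L^1(G)$ with $f\geq0$ and $\norm{f}{1}=1$; nothing further is needed in this direction.

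For the converse, assume $\norm{\lambda_2(f)}{}=1$ whenever $f\geq0$ and $\norm{f}{1}=1$ (since always $\norm{\lambda_2(f)}{}\leq\norm{f}{1}$, the real content is the reverse inequality). I would first treat the case that $G$ is $\sigma$-compact, so that the Haar measure is $\sigma$-finite and one may choose $f\in L^1(G)$ with $f>0$ almost everywhere and $\norm{f}{1}=1$. A short computation with the involution and the modular function gives $(f^*\ast f)(s)=\integral{\overline{f(t)}f(ts)}{G}{t}$, and since $f>0$ a.e.\ and left translations preserve null sets, for each fixed $s$ the integrand is strictly positive for a.e.\ $t$; hence $f^*\ast f>0$ a.e.\ and $\supp(f^*\ast f)=G$. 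By hypothesis $\norm{\lambda_2(f)}{}=1$, so Theorem~\ref{thm:Kesten}(ii) yields $1_G\prec\lambda_2$, and $G$ is amenable.

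For a general locally compact group $G$ the plan is to reduce to $\sigma$-compact open subgroups by two transfer steps. First, the hypothesis descends to every open subgroup $H\leq G$: since $H$ is open, $L^2(G)$ decomposes as the Hilbert-space direct sum $\bigoplus_{Hx\in H\backslash G}L^2(Hx)$, each summand is invariant under $\lambda_2(s)$ for $s\in H$, and a suitably normalized right translation by $x$ is a unitary intertwining $\lambda_2|_{L^2(H)}$ with $\lambda_2|_{L^2(Hx)}$; identifying $L^1(H)$ with the functions in $L^1(G)$ supported in $H$, this gives $\norm{\lambda_2^{G}(f)}{}=\norm{\lambda_2^{H}(f)}{}$ for $f\in L^1(H)$, so $H$ with its own left regular representation satisfies the norm hypothesis, and hence (by the $\sigma$-compact case) every $\sigma$-compact open subgroup of $G$ is amenable. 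Second, amenability transfers back up: given a compact $K\subseteq G$ and $\varepsilon>0$, the subgroup $H$ generated by $K$ and a compact symmetric neighbourhood of $e$ is open and $\sigma$-compact, its Haar measure is the restriction of that of $G$, so a F\o{}lner set in $H$ for $(K,\varepsilon)$ is also one in $G$; thus $G$ satisfies the F\o{}lner condition and is amenable. Combining the two steps finishes the proof.

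The routine parts are the forward implication and the $\sigma$-compact case; the point that requires care is the passage to the non-$\sigma$-compact setting, where one can no longer choose a strictly positive $f\in L^1(G)$ whose $f^*\ast f$ has full support. The remedy is precisely the two transfer steps above --- the coset decomposition of $\lambda_2$ pushes the norm hypothesis down to $\sigma$-compact open subgroups, and F\o{}lner sets push amenability back up to $G$.
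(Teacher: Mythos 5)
Your proof is correct and follows essentially the same route as the paper: Kesten's theorem in both directions, a full-support witness $f$ in the $\sigma$-compact/$\sigma$-finite case, and a reduction to open $\sigma$-compact subgroups in general. The only cosmetic differences are that the paper argues the countable case by contrapositive with the explicit function $f=\sum_n \mu_G(F_n)^{-1}2^{-n}1_{F_n}$, and it leaves implicit the two transfer steps (descent of the norm hypothesis to open subgroups, ascent of amenability along the directed union) that you spell out.
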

\begin{proof}
Suppose $G$ is amenable so that $1_G$ is weakly contained in $\lambda_2$. Then, it follows directly from Theorem \ref{thm:Kesten}(i) that $\norm{\lambda_2(f)}{}=1$, for all $f\in L^1(G)$ with $f\geq0$ and $\norm{f}{1}=1$. For the converse implication, we consider first the case where $G$ is $\sigma$-finite. Assume that $G$ is not amenable so that $1_G$ is not weakly contained in $\lambda_2$. As $G$ is $\sigma$-finite, it decomposes into a countable union of disjoint sets of finite measure. Write $G=\bigcupdot_{n\in\N}F_n$ and define
\[
f=\sum_{n=1}^{\infty}\frac{1}{\mu_G(F_n)2^n}1_{F_n},
\]
where, for each $n\in\N$, $1_{F_n}$ is the indicator function on the set $F_n$. Then $f$ is a non-negative function with $\norm{f}{1}=1$ and with full support, and so, $\supp(f^*\ast f)$ is all of $G$, as well. Then, by Theorem \ref{thm:Kesten}(ii), we must have $\norm{\lambda_2(f)}{}<1$.

Finally, let $G$  be a general (not necessarily $\sigma$-finite) locally compact group. Assume that $\norm{\lambda_2(f)}{}=1$, for all $f\in L^1(G)$ with $f\geq0$ and $\norm{f}{1}=1$. Then, for every open $\sigma$-finite subgroup $H\leq G$ and every $g\in L^1(H)$ with $g\geq0$ and $\norm{g}{1}=1$, we still have $\norm{\lambda_2(g)}{}=1$. Hence, by the above argument, every open $\sigma$-finite subgroup of $G$ is amenable. As $G$ is the union of its open $\sigma$-finite subgroups, it follows that $G$ is amenable.
\end{proof}

Let $f\in L^1(G)$. For parameters $1\leq p_1<p_2<p_3\leq2$, a standard interpolation argument based on the Riesz-Thorin Theorem yields a bound on the norm of $f$ viewed as an element of the symmetrized $p_2$-pseudofunction algebra in terms of the norms of $f$ in the symmetrized $p_1$- and $p_3$-pseudofunction algebras. This was observed and proved By Samei and Wiersma in Proposition 4.5 in \cite{Samei2018QuasiHermitian}. We record this fact without proof in Lemma \ref{lem:Fstarp_interpolating_norms} below.
\begin{lem}\label{lem:Fstarp_interpolating_norms}
Let $G$ be a locally compact group. Let $1\leq p_1<p_2<p_3\leq2$ and let $0<\theta<1$ be such that
\[
\frac{1}{p_2}=\frac{1-\theta}{p_1}+\frac{\theta}{p_3}.
\]
Then, for every $f\in L^1(G)$,
\[
\norm{f}{F^*_{\lambda_{p_2}}}\leq\norm{f}{F^*_{\lambda_{p_1}}}^{1-\theta}\norm{f}{F^*_{\lambda_{p_3}}}^{\theta}.
\]
\end{lem}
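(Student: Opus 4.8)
The plan is to reduce the statement to a single application of the Riesz--Thorin interpolation theorem to the left convolution operator, followed by taking maxima. Write $\lambda(f)$ for the operator $\xi\mapsto f\ast\xi$, viewed as a linear map $S(G)\to L^0(G)$; this map does not depend on the choice of exponent. A standard computation, using that Haar measure is left-invariant (so that no modular function intervenes), identifies the dual representation $\lambda_r'$ with the left regular representation $\lambda_{r'}$ on $L^{r'}(G)$. Hence, writing $\norm{\lambda(f)}{r}$ for the operator norm of $\lambda(f)$ on $L^r(G)$, the definition of the symmetrized pseudofunction norm gives
\[
\norm{f}{F^*_{\lambda_r}}=\max\bigl\{\norm{\lambda(f)}{r},\norm{\lambda(f)}{r'}\bigr\}\qquad(1\le r\le 2),
\]
so it suffices to bound each of $\norm{\lambda(f)}{p_2}$ and $\norm{\lambda(f)}{p_2'}$ by $\norm{f}{F^*_{\lambda_{p_1}}}^{1-\theta}\norm{f}{F^*_{\lambda_{p_3}}}^{\theta}$.

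First I would bound $\norm{\lambda(f)}{p_2}$. Since $1/p_2=(1-\theta)/p_1+\theta/p_3$ with $p_1<p_3$, the space $L^{p_2}(G)$ is the complex interpolation space of $L^{p_1}(G)$ and $L^{p_3}(G)$ at parameter $\theta$, so the Riesz--Thorin theorem applied to $\lambda(f)$ (equivalently, Stein's Interpolation Theorem~\ref{thm:Stein} applied to the constant family $T_z\equiv\lambda(f)$) gives
\[
\norm{\lambda(f)}{p_2}\le\norm{\lambda(f)}{p_1}^{1-\theta}\norm{\lambda(f)}{p_3}^{\theta}\le\norm{f}{F^*_{\lambda_{p_1}}}^{1-\theta}\norm{f}{F^*_{\lambda_{p_3}}}^{\theta}.
\]
Next I would note that passing to Hölder conjugates preserves the interpolation identity: a one-line computation gives $1/p_2'=(1-\theta)/p_1'+\theta/p_3'$, so $L^{p_2'}(G)$ is the interpolation space of $L^{p_1'}(G)$ and $L^{p_3'}(G)$ at the same parameter $\theta$ --- the endpoints now appearing in the opposite order, $p_3'<p_1'$, which is immaterial --- and the identical argument gives
\[
\norm{\lambda(f)}{p_2'}\le\norm{\lambda(f)}{p_1'}^{1-\theta}\norm{\lambda(f)}{p_3'}^{\theta}\le\norm{f}{F^*_{\lambda_{p_1}}}^{1-\theta}\norm{f}{F^*_{\lambda_{p_3}}}^{\theta}.
\]
Taking the maximum of the two estimates gives exactly the claimed inequality.

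I do not expect a genuine obstacle here --- the paper itself calls this a standard interpolation argument --- but two bookkeeping points merit care. The first is the identification $\lambda_r'\cong\lambda_{r'}$, where left-invariance of Haar measure is what prevents a modular correction from appearing. The second is the degenerate endpoint $p_1=1$: then $p_1'=\infty$ and the second interpolation step runs between $L^{\infty}(G)$ and $L^{p_3'}(G)$; this is still covered, since Theorem~\ref{thm:Stein} permits the full range $1\le p_0<p_1\le\infty$ (and the resulting constant is again the geometric mean, as the relevant integrals evaluate to $1-\theta$ and $\theta$). I would also remark that no $\sigma$-finiteness or second-countability hypothesis on $G$ is needed, since the three-lines proof of the interpolation bound only uses density of the simple functions in the finite-exponent spaces $L^{p_2}(G)$ and $L^{p_2'}(G)$.
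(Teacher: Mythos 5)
Your argument is correct and is exactly the ``standard interpolation argument based on the Riesz--Thorin Theorem'' that the paper has in mind: the paper records Lemma~\ref{lem:Fstarp_interpolating_norms} without proof, citing Proposition~4.5 of \cite{Samei2018QuasiHermitian}, whose proof is precisely your reduction to the convolution operator on the two pairs of conjugate exponents followed by taking the maximum. Your bookkeeping points (the identification $\lambda_r'\cong\lambda_{r'}$ via left-invariance, and the $L^{\infty}$ endpoint when $p_1=1$) are handled correctly, so nothing is missing.
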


We can now give the proof of Samei and Wiersma of the last remaining implication of Theorem \ref{thm:amenability_and_Fstarp}. Observe that the symmetrized $p$-pseudofunction algebra recovers $L^1(G)$ when $p=1$ and $C^*_r(G)$ when $p=2$. Hence, for any parameter $1<p<2$, we can employ Lemma \ref{lem:Fstarp_interpolating_norms} to get an upper bound of the symmetrized $p$-pseudofunction algebra norm in terms of the $L^1$-norm and the reduced $C^*$-norm.

\begin{proof}[Proof of Theorem \ref{thm:amenability_and_Fstarp} (v)$\Rightarrow$(i)]
Assume that $G$ is \emph{not} amenable. By Corollary \ref{cor:KestenHulanickiReiter}, we may then find a function $f\in L^1(G)$ such that $f\geq0$, $\norm{f}{1}=1$ and $\norm{\lambda(f)}{}<1$. As $f\geq0$, we have $1_G(f)=\norm{f}{1}=1$. Further, let $0<\theta<1$ be such that $1/p=(1-\theta)/1+\theta/2$. Lemma \ref{lem:Fstarp_interpolating_norms} with $p_1=1$, $p_2=p$ and $p_3=2$ yields the upper bound
\[
\norm{f}{F^*_{\lambda_p}}\leq \norm{f}{1}^{1-\theta}\norm{\lambda_2(f)}{}^{\theta}<1.
\]
As the involution on $F^*_{\lambda_p}(G)$ is an isometry, it follows from Theorems 11.1.4-5 in \cite{Palmer2001BanachAlgVolII} that any $^*$-representation of $F^*_{\lambda_p}(G)$ must be a contraction. Hence, $1_G$ does not extend to a $^*$-representation of $F^*_{\lambda_p}(G)$.
\end{proof}

% \newpage
\bibliography{bib}

\begin{thebibliography}{10}

\bibitem{Banach1932}
Stefan Banach.
\newblock Théorie des opérations linéaires, 1932.

\bibitem{BekkaDeLaHarpeValette}
Bachir Bekka, Pierre de~La~Harpe, and Alain Valette.
\newblock {\em Kazhdan's property (T)}, volume~11.
\newblock Cambridge university press, 2008.

\bibitem{BerghLoefstroem2011Interpolation}
J{\"o}ran Bergh and J{\"o}rgen L{\"o}fstr{\"o}m.
\newblock {\em Interpolation Spaces: An Introduction}.
\newblock Springer Berlin, Heidelberg, 2011.

\bibitem{BrownOsawa}
Nathanial~Patrick Brown and Narutaka Ozawa.
\newblock {\em $C^*$-algebras and finite-dimensional approximations},
  volume~88.
\newblock American Mathematical Soc., 2008.

\bibitem{Cowling1979Littlewool-Paley}
Michael~G. Cowling.
\newblock An application of {L}ittlewood-{P}aley theory in harmonic analysis.
\newblock {\em Mathematische Annalen}, 241:83--96, 1979.

\bibitem{CowlingFendler1984RepInBanachSp}
Michael~G. Cowling and Gero Fendler.
\newblock On representations in {B}anach spaces.
\newblock {\em Mathematische Annalen}, 266:307--315, 1984.

\bibitem{CowlingFournier1976Inclusions}
Michael~G. Cowling and John J.~F. Fournier.
\newblock Inclusions and noninclusion of spaces of convolution operators.
\newblock {\em Transactions of the American Mathematical Society}, 221:59--95,
  1976.

\bibitem{Daws2006pOperatorSpaces}
Matthew Daws.
\newblock $p$-{O}perator spaces and {F}ig{\`a}-{T}alamanca-{H}erz algebras.
\newblock {\em Journal of Operator Theory}, 63(1):47--83, 2010.

\bibitem{DawsSpronk2013TheAP}
Matthew Daws and Nico Spronk.
\newblock On convoluters on ${L}^p$-spaces.
\newblock {\em Studia Mathematica}, pages 15--31, 2013.

\bibitem{DerighettiConvolution}
Antoine Derighetti.
\newblock {\em Convolution Operators on Groups}.
\newblock Springer Berlin, Heidelberg, 2011.

\bibitem{DrutuNowak2015KazhdanProj}
Cornelia Drutu and Piotr~W. Nowak.
\newblock Kazhdan projections, random walks and ergodic theorems.
\newblock {\em Journal f{\"u}r die reine und angewandte Mathematik (Crelles
  Journal)}, 2019:49 -- 86, 2015.

\bibitem{ElkiaerPooya2023}
Emilie~Mai Elki{\ae}r and Sanaz Pooya.
\newblock {Property {(T)} for {B}anach algebras}.
\newblock {\em Journal of Operator Theory}, (to appear).
\newblock arXiv:2310.18136 [math.FA].

\bibitem{Gardella2019ModernLook}
Eusebio Gardella.
\newblock A modern look at algebras of operators on ${L}^p$-spaces.
\newblock {\em Expositiones Mathematicae}, 2019.

\bibitem{GardellaThiel2014GpAlgActingOnLp}
Eusebio Gardella and Hannes Thiel.
\newblock Group algebras acting on ${L}^p$-spaces.
\newblock {\em Journal of Fourier Analysis and Applications}, 21:1310--1343,
  2014.

\bibitem{GardellaThiel2018Isomorphisms}
Eusebio Gardella and Hannes Thiel.
\newblock Isomorphisms of algebras of convolution operators.
\newblock {\em Annales scientifiques de l'{\'E}cole Normale Sup{\'e}rieure},
  55:1433--1471, 2022.

\bibitem{Herz1973HarmonicSynthesis}
Carl~S. Herz.
\newblock Harmonic synthesis for subgroups.
\newblock {\em Annales de l'Institut Fourier}, 23:91--123, 1973.

\bibitem{Herz1976asymmetry}
Carl~S. Herz.
\newblock On the asymmetry of norms of convolution operators. i.
\newblock {\em Journal of Functional Analysis}, 23:11--22, 1976.

\bibitem{Hulanicki1966MeansFolner}
Andrzej Hulanicki.
\newblock Means and {F}{\o}lner condition on locally compact groups.
\newblock {\em Studia Mathematica}, 27:87--104, 1966.

\bibitem{Kesten1959FullBanachMeanVal}
Harry Kesten.
\newblock Full {B}anach mean values on countable groups.
\newblock {\em Mathematica Scandinavica}, 7:146--156, 1959.

\bibitem{Lamperti1958Isom}
John~W. Lamperti.
\newblock On the isometries of certain function-spaces.
\newblock {\em Pacific Journal of Mathematics}, 8:459--466, 1958.

\bibitem{LiaoYu2017Ktheory}
Benben Liao and Guoliang Yu.
\newblock {K}-theory of group {B}anach algebras and {B}anach property {RD}.
\newblock {\em arXiv:1708.01982 [math.FA]}, 2017.

\bibitem{NeufangRunde2007OpSpacesOverQSLp}
Matthias Neufang and Volker Runde.
\newblock Column and row operator spaces over ${QSL}_p$-spaces and their use in
  abstract harmonic analysis.
\newblock {\em Journal of Mathematical Analysis and Applications}, 349:21--29,
  2007.

\bibitem{Palmer2001BanachAlgVolII}
Theodore~W. Palmer.
\newblock {\em Banach Algebras and the General Theory of *-Algebras}.
\newblock Encyclopedia of Mathematics and its Applications. Cambridge
  University Press, 2001.

\bibitem{Phillips2019ReducedGpBanachAlg}
N.~Christopher Phillips.
\newblock Simplicity of reduced group {B}anach algebras.
\newblock {\em arXiv:1909.11278 [math.FA]}, 2019.

\bibitem{Pier1984AmenableLCGP}
Jean-Paul Pier.
\newblock {\em Amenable Locally Compact Groups}.
\newblock John Wiley \& Sons, Inc., 1984.

\bibitem{HejazianPooya2014Simple}
Sanaz Pooya and Shirin Hejazian.
\newblock Simple reduced ${L}^p$ operator crossed products with unique trace.
\newblock {\em Journal of Operator Theory}, 74:133–147, 2015.

\bibitem{Reiter1965OnSomeProperties}
Harold Reiter.
\newblock On some properties of locally compact groups.
\newblock {\em Indagationes Mathematicae (Proceedings)}, 68:697--701, 1965.

\bibitem{Runde2002Amenability}
Volker Runde.
\newblock {\em Lectures on amenability}.
\newblock Springer, Berlin Heidelberg, 2002.

\bibitem{Runde2004repQSLP}
Volker Runde.
\newblock Representations of locally compact groups on {QSL}$_p$-spaces and a
  $p$-analog of the {F}ourier-{S}tieltjes algebra.
\newblock {\em Pacific Journal of Mathematics}, 221:379--397, 2004.

\bibitem{Samei2018QuasiHermitian}
Ebrahim Samei and Matthew Wiersma.
\newblock Quasi-hermitian locally compact groups are amenable.
\newblock {\em Advances in Mathematics}, 359:106897, 2020.

\bibitem{SameiWiersma2018Exotic}
Ebrahim Samei and Matthew Wiersma.
\newblock Exotic {C}*-algebras of geometric groups.
\newblock {\em Journal of Functional Analysis}, 286, 2024.

\bibitem{Stein1956Interpolation}
Elias~M. Stein.
\newblock Interpolation of linear operators.
\newblock {\em Transactions of the American Mathematical Society}, 83:482--492,
  1956.

\bibitem{vonNeumannTheorieDesMasses}
John von Neumann.
\newblock Zur allgemeinen {T}heorie des {M}a{\ss}es.
\newblock {\em Fundamenta Mathematicae}, 13:73--116, 1929.

\end{thebibliography}
\bibliographystyle{plain}

%%%%%%%%%%
	%% Address
%%%%%%%%%%
\vspace{2em}
\begin{minipage}[t]{0.45\linewidth}
  \small    Emilie Mai Elki\ae{}r\\
            Department of Mathematics\\
            University of Oslo\\
            0851 Oslo, Norway\\
		    {\footnotesize elkiaer@math.uio.no}
\end{minipage}

\newpage
\listoffixmes

\end{document}